\documentclass[10pt]{amsart}

\usepackage{color,soul}
\sethlcolor{yellow}
\usepackage[margin=1.25in]{geometry} 
\usepackage{kpfonts}
\usepackage{enumerate}
\usepackage[mathscr]{eucal}
\usepackage{color} 
\usepackage[colorlinks=true, linkcolor=red, citecolor=blue, menucolor=black]{hyperref}

\usepackage{amsmath,amsthm, amsfonts, amssymb,MnSymbol,extpfeil}
\numberwithin{equation}{section}
\usepackage{graphicx}
\usepackage{listings}
\usepackage{enumerate}
\usepackage{centernot}
\usepackage{enumitem}
\usepackage{xfrac}
\usepackage{leftindex}
\usepackage{hyperref}

\usepackage{amsmath, amsthm, amssymb}
\numberwithin{equation}{section}
\usepackage{xspace}
\usepackage{graphicx}
\usepackage{array}
\usepackage{braket}
\usepackage{geometry}
\usepackage{multicol}
\usepackage{mathtools}
\usepackage{enumerate}
\usepackage{delarray}
\usepackage{mathtools}
\usepackage{faktor,extpfeil} 
\usepackage{mathrsfs}
\usepackage{tikz-cd}
\usepackage{hyperref}
\usepackage[normalem]{ulem} 
\usepackage[italicdiff]{physics} 
\usepackage{bbm} 
\usepackage{float}
\usepackage{stmaryrd} 
\usepackage{aligned-overset}
\usepackage{xcolor}
\usepackage{cite}

\theoremstyle{plain}
\newtheorem{thm}{Theorem}[section]
\newtheorem{lemma}[thm]{Lemma}
\newtheorem{cor}[thm]{Corollary}

\newtheorem{maintheorem}{Theorem}

\newtheorem{maincor}[maintheorem]{Corollary}

\newtheorem{mainconj}[maintheorem]{Conjecture}
\newtheorem{prop}[thm]{Proposition}
\theoremstyle{remark}
\newtheorem*{claim}{Claim}
\theoremstyle{definition}

\newtheorem{defn}[thm]{Definition}
\newtheorem*{notation*}{Notation}

\newenvironment{subproof}[1][\proofname]{%
  \begin{proof}[#1]%
}{%
  \end{proof}%
}

\DeclarePairedDelimiterX{\inp}[2]{\langle}{\rangle}{#1, #2}

\newcommand{\sA}{\mathcal A}
\newcommand{\sB}{\mathcal B}
\newcommand{\sM}{\mathcal M}
\newcommand{\sN}{\mathcal N}
\newcommand{\sQ}{\mathcal Q}
\newcommand{\sR}{\mathcal R}

\makeatletter
\newcommand*{\bigcdot}{}
\DeclareRobustCommand*{\bigcdot}{%
  \mathbin{\mathpalette\bigcdot@{}}%
}
\newcommand*{\bigcdot@scalefactor}{.5}
\newcommand*{\bigcdot@widthfactor}{1.15}
\newcommand*{\bigcdot@}[2]{%
  \sbox0{$#1\vcenter{}$}
  \sbox2{$#1\cdot\m@th$}%
  \hbox to \bigcdot@widthfactor\wd2{%
    \hfil
    \raise\ht0\hbox{%
      \scalebox{\bigcdot@scalefactor}{%
        \lower\ht0\hbox{$#1\bullet\m@th$}%
      }%
    }%
    \hfil
  }%
}
\makeatother

\usepackage{enumitem}
\usepackage{color}
\newlist{steps}{enumerate}{1}
\usepackage[foot]{amsaddr}
\setlist[steps, 1]{label = Step \arabic*:}

\hypersetup{%
  colorlinks=true,%
  linkcolor=blue,%
  citecolor=blue,%
  filecolor=blue,%
  menucolor=blue,%
  urlcolor=blue,%
  pdfnewwindow=true,%
  pdfstartview=FitBH
}   

\title{pp}

\newcommand{\HH}{{\mathcal H}}

\newcommand{\sI}{\mathscr I}

\newcommand{\ur}{{\textbf{r}}}
\newcommand{\sP}{\mathcal P}

\begin{document}
\title{Relative Biexactness for Relative Hyperbolic Groups and Some Applications}

\author{Ionu\c{t} Chifan}
\address{The University of Iowa, Mathematics Department, 14 MacLean Hall, Iowa City,
IA, 52240, USA}
\author{Kai Toyosawa}
\address{University of Münster, Mathematics Münster, Einsteinstrasse 62
48149 Münster, Germany}
\author{Zhiyuan Yang}
\address{Purdue University, Department of mathematics,  150 N University St, West Lafayette, IN, 47907, USA}
\maketitle

\begin{abstract} In this paper, we confirm a conjecture of Ozawa and others asserting that every finitely generated, relatively hyperbolic, exact group is bi-exact (in the sense of Ozawa) relative to its natural peripheral structure. As a consequence, every such group gives rise to a prime group von Neumann algebra.

As an application, we construct a continuum family of property (T), relatively hyperbolic groups $\{G_i\}_{i\in I}$ such that, for every fixed arbitrary free, ergodic, probability measure-preserving action $G_i \curvearrowright Z_i$, the collection of associated group measure space von Neumann algebras $\{L^\infty(Z_i)\rtimes G_i\}_{i\in I}$ are pairwise non-stably $\ast$-isomorphic.   
    
\end{abstract}

\section{Introduction}

Bi-exactness was introduced by Ozawa in his seminal work \cite{Oz03} as a new C$^*$-algebraic approximation property for discrete groups. One of its fundamental features is that it has strong structural consequences for the associated group von Neumann algebras. Indeed, Ozawa proved that the group von Neumann algebra of every bi-exact group is solid, meaning that the relative commutant of every diffuse von Neumann subalgebra is amenable. He further proved that Gromov hyperbolic groups are bi-exact, thereby establishing solidity of their group von Neumann algebras \cite{Oz03}.

Motivated by these developments, Ozawa introduced the notion of \emph{relative bi-exactness} \cite{Oz06} for a group $G$ relative to a family of subgroups $\mathcal P=\{H_\lambda \}_{\lambda}$. Roughly speaking, $G$ is bi-exact relative to $\mathcal P$ if it admits a compactification $X\supseteq G$ such that the left translation action of $G$ extends continuously to $X$, the right translation action extends continuously to $X\setminus\partial G$, where $\partial G=X\setminus G$, and the induced action of $G\times G$ on $\partial G$ is topologically amenable relative to the family $\mathcal P$ (see Definition \ref{relbiexactdef}). This notion extends ordinary bi-exactness and retains its fundamental operator algebraic consequences. Motivated by these applications, Ozawa and, independently, several other authors conjectured that every group which is hyperbolic relative to exact peripheral subgroups should be bi-exact relative to its peripheral structure.

\begin{mainconj}\label{relbiexactconj}
Let $G$ be a group that is hyperbolic relative to a family of exact subgroups $\mathcal P=\{H_\lambda\}_\lambda$. Then $G$ is bi-exact relative to $\mathcal P$.
\end{mainconj}

The importance of Conjecture~\ref{relbiexactconj} lies in the fact that, once established, general von Neumann algebra theory immediately yields the following relative solidity result.

\begin{maincor}[Relative solidity]\label{main: relsol} Let $G$ be a finitely generated, non-elementary group that is hyperbolic relative to exact proper subgroups $\mathcal P=\{ H_\lambda\}_\lambda.$  Let $G \curvearrowright \mathcal N$ be a trace preserving action on an amenable von Neumann algebra and denote by $\mathcal{M}=\mathcal N \rtimes G$ be the corresponding cross-product von Neumann algebra.    Let $p \in \sM$ be a nonzero projection and let $\mathcal{A} \subseteq p\mathcal{M}p$ be a von Neumann subalgebra which has no corner that interwtines into $\mathcal N$ inside $\sM$  the sense of Popa (see Subsection \ref{popaint}).

Then one can find orthogonal central projections $p_0,  \dots, p_n \in \mathscr{Z}(\mathcal{A} \vee (\sA'\cap  p\sM p))$ with $p_0 + \dots + p_n = p$ that are maximal with the following properties: 
    \begin{enumerate}
        \item $(\mathcal{A}^{'} \cap p\mathcal{M}p)p_0$ is amenable.
        \item Any corner of $(\mathcal{A} \vee (\sA'\cap p\sM p))p_i$ has a nontrivial subcorner which can be unitarily conjugated into $\mathcal N \rtimes H_i$, for every $i=\overline{1,n}$.
\end{enumerate}
    
\end{maincor}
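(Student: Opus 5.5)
The plan is to combine the main result of the paper, relative bi-exactness of $G$ with respect to $\mathcal P$ (the affirmative answer to Conjecture \ref{relbiexactconj} for finitely generated $G$ with exact peripherals), with the general relative solidity machinery for crossed products by relatively bi-exact groups. This machinery is due to Ozawa, Popa--Vaes and Boutonnet--Ioana--Vaes/Ding--Kunnawalkam Elayavalli--Peterson type results.

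First, from relative bi-exactness I will extract the key dichotomy. If $\mathcal A\subseteq p\mathcal M p$ has no corner intertwining into $\mathcal N$, then, by Ozawa's argument adapted to crossed products with amenable core $\mathcal N$, one of two things happens. Either $\mathcal A'\cap p\mathcal M p$ is amenable relative to $\mathcal N$ (hence amenable, since $\mathcal N$ is amenable), or $\mathcal A\vee(\mathcal A'\cap p\mathcal Mp)$ intertwines into $\mathcal N\rtimes H_\lambda$ for some $\lambda$. To prove this I would use the $G\times G$-boundary amenability relative to $\mathcal P$. It yields a u.c.p. map from $\mathcal M$ into the commutant of the right action on $B(L^2\mathcal M)$ modulo the ideal generated by the projections onto $\mathcal N\rtimes H_\lambda$-cosets. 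I would then run the standard "small boundary" argument: the absence of intertwining into $\mathcal N$ makes $\mathcal A$ asymptotically orthogonal to the compact part relative to $\mathcal N$. Either the commutant is amenable, or the boundary part localizes on the peripheral cosets, giving Popa intertwining into $\mathcal N\rtimes H_\lambda$.

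Second, I will pass from this dichotomy for the whole algebra to the central decomposition. A standard maximality argument, taking the supremum of central projections in $\mathscr Z(\mathcal A\vee(\mathcal A'\cap p\mathcal Mp))$ under which the corners satisfy each property, produces $p_0$ and the $p_i$. Applying the dichotomy to the complementary corner $\mathcal A q$ shows the projections sum to $p$. Finiteness of $n$ comes from the fact that a finitely generated relatively hyperbolic group has finitely many peripheral conjugacy classes. One then merges the projections attached to conjugate peripheral subgroups, since intertwining is insensitive to conjugating $H_\lambda$.

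Third, I will upgrade intertwining to the stated form. If $(\mathcal A\vee(\mathcal A'\cap p\mathcal Mp))p_i$ intertwines into $\mathcal N\rtimes H_i$ for every subcorner, then every corner has a subcorner unitarily conjugate into $\mathcal N\rtimes H_i$. This is Popa's intertwining theorem combined with the fact that $H_i$ is almost malnormal in $G$. Almost malnormality makes $\mathcal N\rtimes H_i$ have control of normalizers and relative commutants, so the partial isometry can be extended to a unitary after cutting down.

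The main obstacle is the first step: making the relative bi-exactness-to-dichotomy argument work with a nontrivial amenable core $\mathcal N$ and infinitely many (up to conjugacy, finitely many) peripheral subgroups. I would handle it by citing the crossed-product version of Ozawa's relative solidity theorem (Brown--Ozawa, Ch. 15, together with Popa--Vaes' relative amenability formulation), applied to the family $\mathcal P$.
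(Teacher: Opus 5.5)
Your proposal is correct and follows essentially the paper's route. The paper deduces this corollary by feeding its relative bi-exactness theorem (Theorem \ref{thm: main}; finiteness of $\mathcal P$ is automatic for finitely generated $G$) into the relative-solidity argument of \cite[Theorems 5.2--5.3]{AMAKCK}, which consists of the crossed-product dichotomy from relative bi-exactness with amenable core, almost-malnormal control of commutants and quasi-normalizers, and a maximality argument over central projections, just as you outline.

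One clarification on your first step. The boundary-amenability argument by itself only yields $\mathcal A\prec_{\mathcal M}\mathcal N\rtimes H_\lambda$ when the commutant is non-amenable. The hypothesis that no corner of $\mathcal A$ intertwines into $\mathcal N$ is not what makes $\mathcal A$ orthogonal to the relevant boundary ideal. Instead, it is what feeds the almost-malnormal control in your third step, which upgrades this intertwining to the join $\mathcal A\vee(\mathcal A'\cap p\mathcal Mp)$.
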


The localization of commuting subalgebras afforded by relative bi-exactness has become one of the fundamental tools in the deformation/rigidity theory of von Neumann algebras. Indeed, this localization principle has become a key ingredient in many classification and rigidity results for von Neumann algebras arising from countable groups and their probability measure preserving actions. Motivated by the central role of boundary pieces and bi-exactness in these developments, the study of this notion has intensified in recent years; see \cite{DKE22,DKEP23,DP23,DD25}, just to mention a few. Of particular significance are the extensions of boundary pieces and the small-at-infinity boundary from groups to general von Neumann algebras developed by Ding, Kunnawalkam Elayavalli, and Peterson \cite{DKEP23}, together with the corresponding notion of von Neumann algebraic bi-exactness introduced by Ding and Peterson \cite{DP23}. These developments established several new structural results, most notably that bi-exactness is a von Neumann algebra invariant \cite{DP23}.

\vskip 0.08in 
At present, the conjecture has been verified for numerous important classes of relatively hyperbolic groups. Ozawa first established it for free products in \cite{Oz06}. We note that for the more general class of non-necessarily exact free product groups, the corresponding relative solidity result had previously been proved in \cite{CH10}, building on Popa's  deformation/rigidity techniques introduced in the influential work of Ioana, Peterson, and Popa \cite{IPP05}. More recently, Oyakawa constructed arrays (see Definition \ref{defn: array}) on relatively hyperbolic groups using deep geometric group theoretic methods \cite{MY09,HO13}. His construction consists of two
complementary parts. The first array controls the geometry of the coned-off
Cayley graph, while the second uses the separating cosets of Hull and Osin,
together with proper arrays on the peripheral subgroups. This enabled him to establish the conjecture when the peripheral subgroups are bi-exact \cite{Oya23}. Furthermore, these arrays were recently combined with deformation/rigidity techniques and group-theoretic Dehn filling to prove the relative solidity result of Theorem~\ref{main: relsol} in the case where the peripheral subgroups are residually finite \cite{AMAKCK}.
\vskip 0.08in

A second line of development closely related to our approach concerns {upgrading techniques} for properties relative to boundary pieces. Roughly speaking, such arguments start with a property relative to a larger, more tractable boundary piece and then upgrade it to a smaller boundary piece by analyzing the difference between the two. This perspective was first systematically developed by Ding and Kunnawalkam Elayavalli in the context of proper proximality \cite{DKE22}, and has since played an important role in that theory; see, for instance, \cite{DKE21,TY25,Din25,KE26}. More recently, related upgrading techniques have also been developed for relative bi-exactness \cite{TY25,KEY26}. Our main proof can be viewed as another instance of this approach. One difference, however, is that the larger boundary pieces in our argument are not generated by subgroups. Instead, they arise naturally from products of peripheral cosets, and we obtain the desired upgrade by analyzing the successive quotients in the resulting filtration.

\vskip 0.08in
In this paper we make new progress on Conjecture \ref{relbiexactconj} by confirming it for a wide class of relative hyperbolic groups.

 \begin{maintheorem}\label{thm: main2} Conjecture \ref{relbiexactconj} holds true whenever $G$ is finitely generated and $\mathcal P$ is finite.
     
 \end{maintheorem}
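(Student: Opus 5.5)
The plan is an upgrading argument along boundary pieces. We work with the boundary-piece formulation of relative bi-exactness from Definition \ref{relbiexactdef}. Throughout, $G$ is finitely generated and $\mathcal P=\{H_1,\dots,H_n\}$ is finite with each $H_i$ exact, so $G$ is exact. By Ozawa's characterization, it suffices to show that the $G\times G$ action on the spectrum of $\ell^\infty G/c_0(G;\mathcal P)$ is topologically amenable when restricted to the small-at-infinity part. Here $c_0(G;\mathcal P)$ is the ideal of functions vanishing off finite unions of sets $sH_it$.

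The first step passes to a larger, more tractable boundary piece. Since $\mathcal P$ is finite and $G$ is finitely generated, the coned-off Cayley graph $\hat\Gamma$ is a fine hyperbolic graph in the sense of Bowditch. Running Ozawa's argument for hyperbolic groups on $\hat\Gamma$ produces the needed approximating maps from geodesics in $\hat\Gamma$; Oyakawa's first array can be used directly for this. Fineness gives local finiteness at the level of edges, and that should be what makes these maps well defined. This yields amenability of the $G\times G$ action relative to a larger ideal $I$. The ideal $I$ is generated by functions supported on finite unions of \emph{products of peripheral cosets}
\[
s_0H_{i_1}s_1H_{i_2}s_2\cdots H_{i_k}s_k, \qquad k\ge 1.
\]
These are exactly the sets whose elements have uniformly bounded $\hat\Gamma$-length with fixed transition data.

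The second step filters $I$ as
\[
c_0(G;\mathcal P)=I_1\subseteq I_2\subseteq\cdots\subseteq I=\overline{\textstyle\bigcup_k I_k},
\]
where $I_k$ is generated by products of at most $k$ cosets. The upgrade is then done inductively. By the permanence property for amenable actions on closed invariant subsets and extensions, it suffices to show that $G\times G$ acts amenably on the spectrum of each successive quotient $I_{k}/I_{k-1}$ for $k\ge 2$, modulo $c_0(G;\mathcal P)$. Elements of $I_k/I_{k-1}$ are functions concentrated on a product $sH_{i_1}\cdots H_{i_k}t$ that are "infinite" in at least two independent coset coordinates. Relative hyperbolicity makes the decomposition $g=s h_1 s_1\cdots h_k t$ essentially unique, via bounded coset penetration and the almost-malnormality of peripheral subgroups. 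Using this, I would identify the quotient with an induced $C^*$-algebra: the action is induced from the stabilizer $H_{i_1}\times H_{i_k}$ acting on a product of copies of the Stone–\v{C}ech remainders of the middle factors. Each $H_i$ is exact, so it acts amenably on $\beta H_i$ and on its uniform Roe boundary. Hence this induced action is amenable, and the left and right actions are handled on the two outer factors separately.

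The main obstacle is passing from the $I_k$ to their union. Geodesics in $\hat\Gamma$ between elements of $G$ have unbounded length, so $I$ is not generated at any finite stage. Amenability along each quotient alone does not formally give amenability relative to $c_0(G;\mathcal P)$ on all of $I$; one needs uniformity in $k$. To handle this, I would take inductive limits of amenable actions directly. The approximating maps built on $I_k/I_{k-1}$ should be chosen to depend only on the first and last coset of a decomposition and on a bounded neighborhood of them. Fineness of $\hat\Gamma$ and the bounded coset penetration property then give estimates that are uniform in $k$, and these can be glued with the Ozawa-type maps from the first step via a partition of unity subordinate to the $\hat\Gamma$-distance. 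This gluing is where I expect the real work to lie.
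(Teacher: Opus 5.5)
Your outline is the paper's proof. Oyakawa's array gives bi-exactness relative to $J_\infty=\overline{\bigcup_k J_k}$, where $J_k$ is generated by products $\mathbb A_k$ of $k$ peripheral cosets. One then upgrades to $J_1=c_0(G,\mathcal P)$ by showing that $G\times G$ acts amenably on each $\mathrm{Sp}(J_k/J_{k-1})$. That space splits into clopen pieces indexed by $T\in\mathbb A_k\setminus\mathbb A_{k-1}$, with stabilizers $(a_0H_{\lambda_1}a_0^{-1})\times(a_k^{-1}H_{\lambda_k}a_k)$, glued by an induction lemma.

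However, the step you call the main obstacle is not one. Your claim that amenability on the successive quotients does not formally give amenability relative to $c_0(G,\mathcal P)$ is false. Put $U_k=\mathrm{Sp}(J_k/J_1)$. These are increasing open $G\times G$-invariant subsets with union $\mathrm{Sp}(J_\infty/J_1)$, and $U_k\setminus U_{k-1}=\mathrm{Sp}(J_k/J_{k-1})$ is closed in $U_k$. The extension property you already invoke gives amenability on every $U_k$ by induction. For the union, any compact $K$ lies in some $U_k$. Given a finite $F\subseteq G\times G$, take approximately equivariant maps $m$ on $U_k$ for the compact set $K\cup FK$, and a cutoff $\phi\in C_c(U_k)$ equal to $1$ on $K\cup FK$. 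Put $M=\phi m+(1-\phi)\delta_e$ on $U_k$ and $M=\delta_e$ elsewhere. Then $M$ is continuous on $\bigcup_kU_k$ and agrees with $m$ on $K\cup FK$, so it has the same equivariance defect on $K$. One more extension step, with the closed set $\mathrm{Sp}(\ell^\infty(G)/J_\infty)$, finishes the argument. No uniformity in $k$, fineness, or partition of unity in the $\hat\Gamma$-distance is needed; in the paper this is the one-line proof of Theorem \ref{thm: hyperbolic embedding amenable action}.

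Conversely, what you treat as routine is where the work lies. \emph{Essential uniqueness} must be made precise as: for $T,S\in\mathbb A_k$, either $T=S$ or $T\cap S$ is $\mathbb A_{k-1}$-small (Theorem \ref{thm: decomposition of Jk}). This makes the projections $p_T$ in $J_k/J_{k-1}$ equal or orthogonal, which yields the clopen decomposition and the stabilizer computation. The paper proves it in four moves:
\begin{enumerate}
\item Add the finitely many coefficients to $X$.
\item Discard the $\mathbb A_{k-1}$-small set where some peripheral letter lies in a $\widehat d_\lambda$-ball of radius $R\ge 6kC$.
\item Apply the isolated-component bound to the resulting geodesic $N$-gon with $N\le 4k$.
\item Exhibit a fixed $\mathbb A_{k-1}$-small set off which each component is connected only to its counterpart in the other word.
\end{enumerate}

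Also, amenability on a piece has nothing to do with Stone--\v{C}ech remainders of the middle factors, which play no role. The piece is a closed invariant subset of $\beta T$. Normalizing $a_0=a_k=1$, the group $H_{\lambda_1}\times H_{\lambda_k}$ is exact. By almost malnormality, its point stabilizers $H_{\lambda_1}\cap gH_{\lambda_k}g^{-1}$ are finite for $g$ off the $\mathbb A_{k-1}$-small set $T\cap H_{\lambda_1}$. An exact group acting on a set with finite stabilizers acts amenably on its Stone--\v{C}ech compactification.
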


Our proof brings together these two lines of development. More precisely, we combine the arrays on relatively hyperbolic groups constructed by Oyakawa \cite{Oya23} with a boundary-piece upgrading argument based on a new decomposition (Theorems \ref{thm: decomposition of Jk}, \ref{orthproj}, and \ref{thm: hyperbolic embedding amenable action}).  A key geometric ingredient in our argument is the separation property of peripheral cosets. This same property also plays a central role in Oyakawa's construction of his second array. More precisely, we introduce an increasing family of boundary piece ideals \((J_k)_{k\geq 1}\), such that Oyakawa's first array \cite{Oya23} yields the relative biexactness with respect to the union boundary piece ideal $ J_\infty:= \overline{\bigcup_n J_n} $. We then decompose the spectrum of the successive quotients $J_k/J_{k-1}$ into compact pieces whose stabilizers are built from conjugates of the peripheral subgroups. Using exactness of the peripheral subgroups, we obtain amenability of these stabilizer actions and glue them along the filtration to obtain the desired boundary amenability. This decomposition of $J_k/J_{k-1}$ arises naturally from the geometry
of hyperbolically embedded subgroups \cite{DGO17}. Structurally, the proof is similar to the boundary-piece upgrading argument of \cite{KEY26}, where amenability of the quotient boundary action is obtained by decomposing an appropriate quotient into stabilizer pieces and then gluing up their amenable actions.

Thus, Theorem \ref{thm: main2} implies the relative solidity result established in Theorem \ref{main: relsol}, which in turn yields the following primeness theorem for the von Neumann algebras associated with relatively hyperbolic groups.

\begin{maincor}
Let $G$ be any non-elementary, finitely generated group which is hyperbolic relative to a finite family of exact, proper subgroups $\mathcal P=\{H_\lambda\}_\lambda$. Then its group von Neumann algebra $L(G)$ is s-prime. That is, for any nonzero projection $p\in L(G)$, there do not exist diffuse, commuting von Neumann subalgebras $\mathcal M,\mathcal N\subset pL(G)p$ whose generated von Neumann algebra $\mathcal M\vee\mathcal N\subseteq pL(G)p$ has finite index.
\end{maincor}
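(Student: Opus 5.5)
We derive the corollary from the relative solidity statement (Corollary~\ref{main: relsol}), which Theorem~\ref{thm: main2} makes available, applied with $\mathcal N=\mathbb C$ (trivial action), so that $\mathcal M=L(G)$. Suppose, for contradiction, that $p\in L(G)$ is a nonzero projection and that $\mathcal M_1,\mathcal M_2\subset pL(G)p$ are commuting diffuse subalgebras with $\mathcal M_1\vee\mathcal M_2\subseteq pL(G)p$ of finite index. Set $\mathcal A=\mathcal M_1$. Since $\mathcal A$ is diffuse, no corner of $\mathcal A$ intertwines into $\mathbb C$, so Corollary~\ref{main: relsol} applies. It yields central projections $p_0,\dots,p_n$ of $\mathcal A\vee(\mathcal A'\cap pL(G)p)$ summing to $p$.

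\textbf{Step 1: ruling out $p_0$.} Suppose $p_0\neq 0$. Then $(\mathcal A'\cap pL(G)p)p_0$ is amenable, so it is hyperfinite, and it contains $\mathcal M_2p_0$. I would next show that the finite index inclusion passes to the $p_0$-corner. From this, $\mathcal A p_0$ and its relative commutant together generate a finite index subalgebra of $p_0L(G)p_0$. Because $G$ is non-elementary relatively hyperbolic, it is ICC and nonamenable, so $L(G)$ is a nonamenable II$_1$ factor. Moreover, $L(G)$ is solid-type relative to $\mathcal P$, which means that a diffuse subalgebra with nonamenable relative commutant must intertwine into some $L(H_i)$. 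Applying this symmetric relative solidity with the roles of $\mathcal M_1$ and $\mathcal M_2$ exchanged, I would conclude that one of the two tensor-like factors is amenable. The remaining task in this case is to derive that the whole algebra is amenable or embeds into a peripheral. The cleanest route is the standard one: if $\mathcal M_1\vee\mathcal M_2$ has finite index in a corner of a nonamenable factor, then neither of the commuting algebras has amenable relative commutant in all of its corners. I therefore rerun the dichotomy for each $\mathcal M_j$. In the "amenable commutant" case, $\mathcal M_1\vee\mathcal M_2$ is contained in $\mathcal M_1\vee(\mathcal M_1'\cap p L(G)p)$. If $\mathcal M_1$ is also forced to be amenable (by applying the result to $\mathcal A=\mathcal M_2$), the corner would be amenable, a contradiction.

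\textbf{Step 2: ruling out $p_i$ for $i\ge1$.} Here a corner of $\mathcal M_1\vee\mathcal M_2$, which has finite index in a corner of $L(G)$, intertwines into $L(H_i)$. By Popa's criterion, a finite index subalgebra inherits intertwining, so $p_iL(G)p_i\prec_{L(G)} L(H_i)$. This forces $[G:H_i]<\infty$, which is impossible because $H_i$ is a proper peripheral subgroup of a non-elementary relatively hyperbolic group, and such subgroups are almost malnormal with infinite index. This step is routine.

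\textbf{Main obstacle.} The delicate point is Step 1. There one must organize the case analysis by applying Corollary~\ref{main: relsol} to both $\mathcal M_1$ and $\mathcal M_2$ and intersecting the resulting central decompositions, so as to reach a corner on which either both algebras have amenable commutants (yielding amenability of a corner of $L(G)$, contradicting nonamenability of the factor $L(G)$) or the generated algebra embeds into a peripheral algebra (handled by Step 2). I would also need the fact that finite index passes to the corners cut by central projections of $\mathcal M_1\vee\mathcal M_2$, which follows from Pimsner–Popa basics.
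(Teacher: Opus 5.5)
Your argument is correct in substance and uses the same ingredients as the paper, but you arrange them symmetrically instead of reducing first.

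The paper first observes that $\mathcal M_1\vee\mathcal M_2$ is nonamenable, because it has finite index in the nonamenable $pL(G)p$. It then cuts by a central projection so that one algebra, say $\mathcal M_2$, has no amenable direct summand, and applies the dichotomy once, to $\mathcal A=\mathcal M_1$. The amenable piece $p_0$ must vanish, since it would make a corner of $\mathcal M_2$ amenable. So a peripheral piece survives, and Lemma~\ref{passtofiniteindexext} together with Lemma~\ref{intsubgroups} gives $[G:H_i]<\infty$.

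You run the argument the other way round. The point to make explicit is that your Step~2 is global. For either choice $\mathcal A=\mathcal M_1$ or $\mathcal A=\mathcal M_2$, a nonzero $p_i$ with $i\geq 1$ gives $\mathcal A\vee(\mathcal A'\cap pL(G)p)\prec L(H_i)$. This algebra contains $\mathcal M_1\vee\mathcal M_2$, so it has finite index in $pL(G)p$. Lemma~\ref{passtofiniteindexext} then yields $pL(G)p\prec L(H_i)$, which is impossible.

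Hence $p_0=p$ in both applications. So $\mathcal M_2\subseteq\mathcal M_1'\cap pL(G)p$ and $\mathcal M_1\subseteq\mathcal M_2'\cap pL(G)p$ are both amenable. Their join is amenable, since commuting hyperfinite algebras generate a hyperfinite one, and then $pL(G)p$ is amenable by finite index. Your route costs a second application of the dichotomy but avoids the cutting step. The auxiliary facts are the contrapositives of the ones the paper uses implicitly. Consequently the ``main obstacle'' you describe does not arise: nothing needs to be intersected, and finite index never has to be transferred to corners.

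Three statements in your write-up should be corrected.
\begin{enumerate}
\item Non-elementary relatively hyperbolic groups need not be ICC. For example, $\mathbb F_2\times\mathbb Z/2\mathbb Z$ is hyperbolic relative to the almost malnormal quasiconvex subgroup $\langle a\rangle\times\mathbb Z/2\mathbb Z$, so $L(G)$ need not be a factor. What you actually need is that no corner $pL(G)p$ is amenable, and this holds for every nonamenable $G$: a hypertrace $\psi$ on $L(G)z$, with $z$ central, gives the left-invariant mean $f\mapsto\psi(zfz)$ on $\ell^\infty(G)$.
\item It is false that neither commuting algebra can have amenable relative commutant. In $R\bar\otimes L(\mathbb F_2)$, with $R$ the hyperfinite II$_1$ factor, the relative commutant of $L(\mathbb F_2)$ is $R$. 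Only ``not both'' is true, and that is all your argument uses.
\item In Step~2, intertwining passes upward from the finite-index subalgebra to the larger algebra, not downward as your wording suggests.
\end{enumerate}
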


In particular, for every such group, $L(G)$ cannot be decomposed as a tensor product of diffuse von Neumann algebras; that is, it is prime. This result unifies and extends several previous primeness results in the literature, including \cite{Ge98}, \cite{Oz03}, \cite[Theorem A]{CKP14}, and \cite[Theorem D]{AMAKCK}. It also provides new evidence for the broader and more difficult conjecture that the group von Neumann algebra of every non-elementary acylindrically hyperbolic group is prime; see \cite[Conjecture 8.2]{CKP14}.

\vskip 0.08in

Our main relative solidity result, Theorem~\ref{main: relsol}, can also be used to derive a new rigidity result for crossed product von Neumann algebras. Specifically, by combining it with techniques from the theory of unique Cartan subalgebras \cite{IPP05,AMAKCK}, measure equivalence \cite{Sak09}, and structural constructions in geometric group theory \cite{Ga00,AMO06}, we construct a continuum of property~(T) groups satisfying a group measure space version of Connes Rigidity Conjecture.

\begin{maintheorem}\label{mainresult:nonW^*e}
There exists a continuum of property~(T) relatively hyperbolic groups
$\{G_\imath\}_{\imath\in\sI}$ with the following property. For each
$\imath\in\sI$, let $G_\imath\curvearrowright Z_\imath$ be an arbitrary free, ergodic,
pmp action on a standard probability space. Then the associated group measure space von Neumann algebras
$\{L^\infty(Z_\imath)\rtimes G_\imath\}_{\imath\in\sI}$
are pairwise non stably isomorphic.
\end{maintheorem}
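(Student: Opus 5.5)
We will build the family $\{G_\imath\}_{\imath\in\sI}$ so that each $G_\imath$ is hyperbolic relative to a single peripheral subgroup $H_\imath$. The groups $H_\imath$ will be exact, have property (T), and be pairwise non-measure-equivalent, and they will be encoded in $G_\imath$ rigidly enough that any stable isomorphism between crossed products forces a measure equivalence between peripheral pieces.

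\textbf{Construction.} Fix a torsion-free hyperbolic property (T) group $\Gamma$, for instance a uniform lattice in $Sp(n,1)$. Using small cancellation over relatively hyperbolic groups \cite{Ga00,AMO06} (equivalently, Dehn filling), produce a continuum of property (T) groups $Q_\imath$ that are hyperbolic relative to exact peripheral subgroups. The invariant distinguishing them will be a measure-equivalence invariant. We would try first to take $H_\imath=\Gamma\times\Gamma_\imath$, where $\{\Gamma_\imath\}$ is a continuum of pairwise non-ME property (T) exact groups, with cost, $\ell^2$-Betti numbers, or Sako's invariant \cite{Sak09} separating them. Then set $G_\imath=H_\imath\ast_{C}K$, amalgamated along a suitable cyclic or malnormal subgroup, or obtain $G_\imath$ by a relatively hyperbolic Dehn filling of such an amalgam. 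Arrange that $G_\imath$ has (T), is hyperbolic relative to $\{H_\imath\}$, and is finitely generated with exact peripherals. Theorem \ref{thm: main2} then applies, so $G_\imath$ is bi-exact relative to $H_\imath$.

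\textbf{Rigidity.} Suppose $\theta\colon M_\imath=L^\infty(Z_\imath)\rtimes G_\imath\to (M_\jmath)^t$ is an isomorphism, and write $A=L^\infty(Z_\imath)$. First we show that $\theta(A)$ is unitarily conjugate to $L^\infty(Z_\jmath)^t$, i.e.\ that $M_\jmath$ has a unique Cartan subalgebra up to corners. For this, apply the relative solidity Theorem \ref{main: relsol} in $M_\jmath$, with $\sN=L^\infty(Z_\jmath)$, to the amenable algebra $\theta(A)$; its relative commutant is $\theta(A)$ itself, so the normalizer is what must be analyzed. We follow the Popa–Vaes/\cite{AMAKCK} strategy: relative bi-exactness yields that a Cartan subalgebra either intertwines into $\sN$ or has normalizer amenable relative to $\sN\rtimes H_\jmath$. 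In the second case, the product structure of $H_\jmath=\Gamma\times\Gamma_\jmath$ together with property (T) and bi-exactness of $\Gamma$ lets us push the Cartan subalgebra into $\sN$ anyway. Once Cartan subalgebras are conjugate, Singer's theorem converts $\theta$ into a stable orbit equivalence, and hence $G_\imath$ and $G_\jmath$ are measure equivalent.

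Finally, we pass from ME of $G_\imath,G_\jmath$ to ME of the peripherals. Here we would use a measure-equivalence rigidity statement for relatively hyperbolic groups whose peripherals are direct products with non-amenable factors. Applying relative solidity to the commuting pair $L(\Gamma)$ and $L(\Gamma_\imath)$ inside the coupling von Neumann algebra forces $H_\imath$ to intertwine into $H_\jmath$ and conversely, producing a measure equivalence $H_\imath\sim_{ME}H_\jmath$. Sako's ME rigidity for products of bi-exact groups \cite{Sak09} then gives $\Gamma_\imath\sim_{ME}\Gamma_\jmath$, contradicting the choice of the $\Gamma_\imath$.

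\textbf{Main obstacle.} The hardest step is the unique Cartan step, specifically excluding the case where the Cartan subalgebra's normalizer localizes into $\sN\rtimes H_\jmath$ for an arbitrary, possibly non-amenable-looking, action. We expect to resolve it by choosing the peripherals so that $\sN\rtimes H_\jmath$ itself has unique Cartan (via \cite{Sak09}-type bi-exactness of each factor), and then transferring back through the intertwining.
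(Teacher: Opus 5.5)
Your outline has the same architecture as the paper: a product peripheral, Theorem~\ref{thm: main2}, unique Cartan, Singer, ME of the peripherals, then non-ME of the peripherals. However, the two load-bearing steps are not supplied.

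\textbf{The unique Cartan step fails as proposed.}
\begin{itemize}
\item Relative solidity applied to the Cartan subalgebra gives no information. Its relative commutant is itself, which is amenable.
\item The dichotomy you then invoke is a garbled form of the Popa--Vaes theorem: either $A\prec\sN\rtimes H_\jmath$, or the normalizer $\mathcal N_M(A)''$ is amenable relative to $\sN$. That theorem requires $G_\jmath$ to be weakly amenable in addition to relatively bi-exact. Nothing in a small-cancellation construction of a property (T) group provides weak amenability.
\item Even granting $A\prec\sN\rtimes H_\jmath$, ``pushing into $\sN$'' via unique Cartan for $\sN\rtimes H_\jmath$ is only a hope. $H_\jmath$ need not act ergodically. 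The intertwined corner of $A$ need not be Cartan in a corner of $\sN\rtimes H_\jmath$. Cartan rigidity for arbitrary actions of $\Gamma\times\Gamma_\jmath$ would need both factors to be weakly amenable and bi-exact.
\end{itemize}
The missing idea is that of Theorem~\ref{uniquecartan}, with $\mathcal A=L^\infty(Z_1)$ and $\mathcal B=L^\infty(Z_2)$.
\begin{itemize}
\item Apply relative solidity in $\sM_1$ not to the Cartan, but to the commuting non-amenable pair $L(H_2^1),L(H_2^2)\subset\sM_2=p\sM_1p$ coming from the product peripheral of the \emph{other} group.
\item Combined with the malnormal control of quasi-normalizers (Theorem~\ref{intmalnormal}), this puts a corner $L(H_2)z$ into $\mathcal A\rtimes H_1$. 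So the unitaries $u_gz$, $g\in H_2$, are uniformly $\|\cdot\|_2$-concentrated on $EH_1E$ for some finite $E\subset(G_1\setminus H_1)\cup\{1\}$.
\item These unitaries normalize the abelian algebra $\mathcal B$. A trace estimate on the trivial commutator $a(u_gau_g^*)a^*(u_ga^*u_g^*)=1$, $a\in\mathscr U(\mathcal B)$, combined with the asymptotic freeness of Corollary~\ref{nontrivial}, forces $\mathcal Bz\prec\mathcal A\rtimes H_1$.
\item Malnormality again, together with $[G_1:H_1]=\infty$, gives $\mathcal B\prec\mathcal A$, and Popa's criterion \cite{Po01} concludes.
\end{itemize}
No weak amenability is used.

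\textbf{The group-theoretic inputs are also not in place.}
\begin{itemize}
\item You never produce the continuum $\{\Gamma_\imath\}$, and the invariants you suggest cannot separate property (T) groups. Kazhdan groups all have cost $1$ (Hutchcroft--Pete) and vanishing first $\ell^2$-Betti number.
\item Requiring (T) of the peripherals is unnecessary. A nontrivial amalgam $H_\imath*_CK$ never has (T), since it acts without fixed point on its Bass--Serre tree. You must therefore pass to the small-cancellation quotient of \cite{AMO06}, which yields a property (T) group hyperbolic relative to \emph{any} finitely generated $H_\imath$.
\item This is what lets the paper take $H_\imath=A_c\times A_d$ with $A_c=\Gamma_c*(\mathbb F_2\times\mathbb F_2)$, built from Gaboriau's fixed-price groups of cost $c$ \cite{Ga00}. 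Cost distinguishes them precisely because they are not (T). Exactness comes from measure equivalence with free groups. Non-ME of the $H_\imath$ follows from Drimbe's product rigidity for free products \cite{Dr23} plus \cite{ITD} (Theorem~\ref{nonMEperipheral}).
\item Your final appeal to Sako's product rigidity would need the factors $\Gamma_\imath$ to be bi-exact, which you have not arranged.
\end{itemize}

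\textbf{The passage to ME of the peripherals is not justified.} ``$H_\imath$ intertwines into $H_\jmath$ and conversely'' does not by itself produce an ME coupling of the peripherals. The paper obtains one through Lemma~\ref{lem:maximal-malnormal}, Proposition~\ref{prop:malnormal-maximal} and Theorem~\ref{thm:MEfactorrigid}:
\begin{itemize}
\item \cite[Theorem 25]{Sak09} gives a partial embedding of the non-amenable factor into $\Gamma_0$.
\item Almost malnormality makes $E_X^{\Gamma_0}$ of any partial embedding a projection. Hence a maximal partial embedding exists, and it is invariant under the commuting factor.
\item A disjoint-translates argument, again using almost malnormality, shows that the maximal embeddings of $H_1$ into $H_2$ and of $H_2$ into $H_1$ coincide. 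This common set is the desired coupling.
\end{itemize}
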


Over the years, several continuum-sized families of countable groups that are pairwise non-measure equivalent have been constructed. For example, Gaboriau's work on cost and $L^2$-Betti numbers \cite{Ga00,Ga02} yields continuum many pairwise non-measure equivalent countable groups. More recently, Drimbe and Vaes proved in \cite[Theorem A]{DV25} the existence of a continuum of torsion-free, non-finitely generated countable groups that are pairwise not von Neumann equivalent, and hence, in particular, pairwise non-measure equivalent. Furthermore, Ioana and Tucker-Drob showed in \cite[Theorem A]{ITD} that a continuum family of property~(T), torsion-free wreath-like product groups introduced in \cite{CIOS2} are pairwise non-measure equivalent. Independently, using completely different methods based on $\ell^2$-Betti numbers, Fournier-Facio and Sun established in \cite[Theorem D]{FFS} the existence of a continuum of pairwise non-measure equivalent finitely generated, torsion-free groups, which may moreover be chosen to have property~(T).
\vskip 0.08in
In particular, Theorem~\ref{mainresult:nonW^*e} provides a new continuum of pairwise non-measure equivalent property~(T) groups, thereby adding examples of a different flavor to the families constructed in the aforementioned works.

\begin{maincor}
There exists a continuum of property~(T) relatively hyperbolic groups $\{G_\imath\}_{\imath\in\mathscr I}$ such that $G_\imath \not\cong_{\mathrm{ME}} G_\jmath$ for all distinct $\iota,\jmath\in\mathscr I$.
\end{maincor}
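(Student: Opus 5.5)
The statement to prove is the last corollary: there is a continuum of pairwise non-measure-equivalent property~(T) relatively hyperbolic groups. I will deduce it from Theorem~\ref{mainresult:nonW^*e} together with the Furman--Singer correspondence between measure equivalence and stable orbit equivalence. Concretely, I take the family $\{G_\imath\}_{\imath\in\sI}$ produced by Theorem~\ref{mainresult:nonW^*e}; these groups already have property~(T) and are relatively hyperbolic. It remains to show that $G_\imath\not\cong_{\mathrm{ME}}G_\jmath$ whenever $\imath\neq\jmath$.

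\textbf{Step 1 (from ME to SOE).} Suppose, for contradiction, that $G_\imath$ and $G_\jmath$ are measure equivalent. By Furman's characterization (see also Singer's theorem), there exist free ergodic pmp actions $G_\imath\curvearrowright Z_\imath$ and $G_\jmath\curvearrowright Z_\jmath$ that are stably orbit equivalent. This means there are positive-measure subsets $A\subseteq Z_\imath$ and $B\subseteq Z_\jmath$ such that the restricted orbit equivalence relations $\mathcal R(G_\imath\curvearrowright Z_\imath)|_A$ and $\mathcal R(G_\jmath\curvearrowright Z_\jmath)|_B$ are isomorphic. Since the groups are countably infinite, ergodicity can be arranged; if necessary, I pass to an ergodic component of the coupling, using that an ergodic decomposition of the coupling space yields an ergodic ME coupling.

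\textbf{Step 2 (from SOE to stable $*$-isomorphism).} By Feldman--Moore / Singer, an isomorphism of the restricted equivalence relations induces a $*$-isomorphism of their von Neumann algebras, with trivial $2$-cocycle since the relations come from free actions. The von Neumann algebra of the restriction of $\mathcal R$ to $A$ equals $1_A(L^\infty(Z_\imath)\rtimes G_\imath)1_A$. Hence
\[
1_A\big(L^\infty(Z_\imath)\rtimes G_\imath\big)1_A\cong 1_B\big(L^\infty(Z_\jmath)\rtimes G_\jmath\big)1_B ,
\]
so the two group measure space algebras are stably isomorphic. This contradicts Theorem~\ref{mainresult:nonW^*e}, which applies to arbitrary free ergodic pmp actions.

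\textbf{Main obstacle.} The only point needing care is Step 1: ensuring that the actions extracted from the ME coupling are simultaneously free and ergodic. Freeness is automatic from Furman's construction, because the actions on the fundamental domains are induced from free actions on the coupling. For ergodicity, I would first try passing to ergodic components of the coupling measure, which are again ME couplings. Everything else is standard.
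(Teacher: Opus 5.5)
Your proposal is correct and follows the paper's own route: the paper states this corollary as an immediate consequence of Theorem~\ref{mainresult:nonW^*e}, via Furman's correspondence between measure equivalence and stable orbit equivalence of free ergodic p.m.p.\ actions together with the Feldman--Moore/Singer identification of the crossed products. One small precision: freeness of the induced actions is not automatic for an arbitrary coupling, since $G_\imath\times G_\jmath$ need not act freely even when each factor does; Furman arranges it by taking the product of the coupling with a free p.m.p.\ $G_\imath\times G_\jmath$-space and then passing to an ergodic component, which your citation of his theorem already covers (and note that Theorems~\ref{thm:MEfactorrigid} and~\ref{nonMEperipheral}, used at the end of the proof of Theorem~\ref{mainresult:nonW^*e}, give the ME conclusion directly, without going through von Neumann algebras).
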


 \noindent{\bf Acknowledgments.} The authors thank Professors Daniel Drimbe, Adrian Ioana, Srivatsav Kunnawalkam Elayavalli, Koichi Oyakawa, and Jesse Peterson for their helpful comments and suggestions.
 
 The first author was partially supported by the NSF grants DMS-2452247 and DMS-. The second author was funded by the Deutsche Forschungsgemeinschaft (DFG, German Research Foundation) under Germany's Excellence Strategy EXC 2044/2 –390685587, Mathematics M\"unster: Dynamics–Geometry–Structure.
 
 \noindent{\bf AI tool disclosure.} ChatGPT 5.5 and 5.6 Sol was used for English-language editing, proofreading, grammatical corrections, and improving the organization and clarity of expository material. In the early stages of the project, it was also used to assist with literature searches for related theorems and references. All mathematical results, including the formulation and proofs of the theorems, were developed solely by the human authors.

\section{Preliminaries on Hyperbolically Embedded Subgroups}

 In this section, we recall the notion of hyperbolically embedded subgroups together with several fundamental properties that will be used throughout the paper.

Let $G$ be a discrete group, let $X \subset G$ be a subset, not necessarily a generating set, and let $\mathcal P \coloneqq \{H_\lambda\}_{\lambda\in\Lambda}$ be a family of subgroups of $G$. Define $\HH\coloneqq\bigsqcup_\lambda(H_\lambda\setminus\{1\})$, where the union is understood as a formal disjoint union rather than as a subset of $G$. The relative Cayley graph $\Gamma(G,X\sqcup\HH)$ associated with $(G,X,\{H_\lambda\}_{\lambda\in\Lambda})$ is the Cayley graph of $G$ with respect to the alphabet $X\sqcup\HH$. Thus, its vertex set is $G$, and its set of oriented edges is $G\times (X\sqcup\HH)$, where an edge $(g,s)$ has initial vertex $g$ and terminal vertex $gs$.

For each $\lambda \in \Lambda$, the subgraph $\Gamma(H_\lambda, H_\lambda\setminus\{1\}) \subset \Gamma(G,X\sqcup\HH)$ is a complete graph and therefore has diameter one. To retain information about the intrinsic geometry of $H_\lambda$, one introduces the relative metric $\widehat d_\lambda\colon H_\lambda\times H_\lambda\rightarrow[0,\infty]$. A path in $\Gamma(G,X\sqcup\HH)$ is called \emph{$H_\lambda$-admissible} if it contains no edge of the distinguished subgraph $\Gamma(H_\lambda, H_\lambda\setminus\{1\})$. For $f,g\in H_\lambda$, the relative distance $\widehat d_\lambda(f,g)$ is defined to be the length of a shortest $H_\lambda$-admissible path from $f$ to $g$. If no such path exists, one sets $ \widehat{d}_{\lambda}(f,g)=\infty$.
We also extend $ \widehat{d}_{\lambda} $ to $ \widehat{d}_{\lambda}: G\times G\to [0,\infty] $ by defining $ \widehat{d}_{\lambda}(f,g)=\widehat{d}_{\lambda}(1,f^{-1}g) $ if $ f^{-1}g\in H_\lambda $ and $ \widehat{d}_{\lambda}(f,g)=\infty $ otherwise.

\begin{defn}
Let $G$ be a group and let $X\subseteq G$. A family of subgroups $\{H_\lambda\}_{\lambda\in\Lambda}$ is said to be \emph{hyperbolically embedded in $G$ with respect to $X$}, abbreviated as $\{H_\lambda\}_{\lambda\in\Lambda}\hookrightarrow_h(G,X)$ throughout the paper, if the following conditions hold:
\begin{enumerate}
\item $G= \langle X,\HH\rangle$.
\item The relative Cayley graph $\Gamma(G,X\sqcup\HH)$ is hyperbolic.
\item For every $\lambda\in\Lambda$, the metric space $(H_\lambda,\widehat d_\lambda)$ is proper; that is, every ball of finite radius contains only finitely many elements.
\end{enumerate}
\end{defn}

It was proved in \cite[Proposition 4.28]{DGO17} (see also \cite[Proposition 2.16]{Oya23}) that if $G$ is hyperbolic relative to $\{H_\lambda\}_{\lambda\in\Lambda}$, then $\{H_\lambda\}_{\lambda\in\Lambda}\hookrightarrow_h (G,X)$ for some, equivalently any, finite relative generating set $X$ of $G$. In fact, when $ \Lambda $ is finite, the converse implication also holds.

Throughout the paper, the subgroups in $\mathcal P=\{H_\lambda\}_{\lambda\in \Lambda}$ will be referred to as \emph{the peripheral subgroups} of $G$. A fundamental property of these subgroups, which plays a crucial role in our subsequent arguments, is their mutually almost malnormality. 

\begin{prop}[{\cite[Proposition 4.33]{DGO17}}]\label{Prop:maln}
Let $G$ be a group together with a family of hyperbolically embedded subgroups,   $ \{H_\lambda\}_{\lambda \in \Lambda} \hookrightarrow_{h} (G,X) $. Then for every $\lambda\in \Lambda$ and $g\in G\setminus H_\lambda$, we have $|H_\lambda \cap g^{-1}H_\lambda g|<\infty $. Also, if $\lambda\ne \mu$, then $|H_\mu \cap g^{-1}H_\lambda g|<\infty$ for all $g\in G$.
\end{prop}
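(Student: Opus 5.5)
The plan is to bound the intersection in the relative metric and then use properness. Fix $g\in G$, and in the first case assume $g\notin H_\lambda$. Let $h\in H_\lambda\cap g^{-1}H_\lambda g$, so that $ghg^{-1}=h'\in H_\lambda$, or equivalently $gh = h'g$. I want to show that $\widehat d_\lambda(1,h)$ is bounded by a constant $C$ depending only on $g$, not on $h$. Properness of $(H_\lambda,\widehat d_\lambda)$ then implies that only finitely many such $h$ exist.

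To get the bound, consider the closed loop in $\Gamma(G,X\sqcup\HH)$ made of four pieces. The first is a geodesic word $w$ in $X\sqcup\HH$ from $1$ to $g$. The second is the single $\HH$-edge labelled $h$ from $g$ to $gh$. The third is the path labelled $w^{-1}$ from $gh$ to $ghg^{-1}=h'$. The fourth is the single edge labelled $h'^{-1}$ back to $1$. So we have a geodesic quadrilateral with two sides of length $|g|$ and two sides that are $H_\lambda$-edges. The key tool is the standard isolated-components lemma for hyperbolically embedded subgroups (DGO, Proposition 4.13). In a geodesic $n$-gon, the total $\widehat d_\lambda$-length of the isolated $H_\lambda$-components is at most $Dn$, for a constant $D$ depending only on $(G,X,\{H_\lambda\})$.

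The argument then splits according to whether the edge $h$ is isolated in the loop.
\begin{itemize}
\item If it is isolated, the lemma gives $\widehat d_\lambda(1,h)\le 4D$ directly.
\item If it is not isolated, it is connected to another $H_\lambda$-component. That component is either the edge $h'$ or an $H_\lambda$-component of one of the copies of $w$.
\end{itemize}
In the second case, "connected" means the two components lie in the same left coset of $H_\lambda$. If the edge $h$ is connected to the edge $h'$, then $gH_\lambda=H_\lambda$, so $g\in H_\lambda$, which contradicts the assumption. If instead it is connected to a component of a copy of $w$, I reroute. I replace the relevant subpath by a single $H_\lambda$-edge joining the two components, which produces a shorter polygon (at most $4$ sides, after splitting $w$ at that vertex). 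The aim is that the $h$-edge becomes isolated in the new polygon, or else its $\widehat d_\lambda$-displacement is controlled by the finitely many vertices of $w$ and one application of the lemma.

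For the second statement, take $\lambda\ne\mu$ and $h\in H_\mu\cap g^{-1}H_\lambda g$, where $g$ is now arbitrary. I run the same loop, with an $H_\mu$-edge $h$ and an $H_\lambda$-edge $h'$. Components of different labels are never connected, so the edge $h'$ can never absorb $h$. Only components of $w$ can, and these are handled exactly as before. This gives $\widehat d_\mu(1,h)\le C(g)$, and properness of $(H_\mu,\widehat d_\mu)$ finishes the argument.

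The main obstacle is the case where the $h$-edge is connected to a component of $w$. Turning this into a uniform bound requires some care with the bookkeeping. I would first try to choose $w$ of minimal length among all words representing elements of the double coset $H_\lambda g H_\lambda$. Replacing $g$ by such a representative changes the intersection only by a conjugation inside $H_\lambda$, which preserves finiteness. With this minimal choice, no component of $w$ can be connected to the $h$-edge: if one were, $w$ could be shortened within the double coset. So the $h$-edge is automatically isolated.
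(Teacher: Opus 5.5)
The paper gives no proof of this statement and simply quotes \cite[Proposition 4.33]{DGO17}. Your final argument is correct and is essentially the standard proof from that reference: pass to a shortest representative of the double coset, note that the $h$-edge is then an isolated $H_\lambda$-component of the geodesic quadrilateral, apply Proposition \ref{prop: uniform bound for isolated components} to get $\widehat d_\lambda(1,h)\le 4C$, and conclude by properness. The tentative ``rerouting'' step is unnecessary and can be dropped.

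Two small points should be made explicit:
\begin{enumerate}
\item In the case $\lambda\neq\mu$ you must minimize over the double coset $H_\lambda gH_\mu$, not $H_\lambda gH_\lambda$. This replacement changes $H_\mu\cap g^{-1}H_\lambda g$ only by conjugation by an element of $H_\mu$.
\item Components of the second side, the translate $h'w$, need the same check. They are ruled out by the same minimality argument, because $h'^{-1}gH = gh^{-1}H = gH$ for $H=H_\lambda$ (respectively $H=H_\mu$).
\end{enumerate}
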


\vskip 0.08in 

We now recall the notion of separating cosets along with their main properties. Our exposition follows \cite{HO13,Oya23} and the notations within.

\begin{defn}\cite[Definition 4.1]{Osi16}\cite[Definition 2.17]{Oya23}
    Let $ p $ be a fixed path in $ \Gamma(G,X\sqcup \HH) $. A nontrivial subpath $q$ of $p$ is called an \emph{$H_\lambda$-subpath} if every edge of $q$ is labeled by an element in $H_\lambda\setminus \{1\}$. A maximal $H_\lambda$-subpath of $p$ is called an \emph{$H_\lambda$-component of $p$}. 
    
    Two $H_\lambda$-components $q_1$ and $q_2$ of $p$ are said to be \emph{connected} if their vertices lie in the same left coset of $H_\lambda$. An $H_\lambda$-component $q$ of $p$ is called \emph{isolated} if it is not connected to any other $H_\lambda$-component of $p$.

    In the case that $p$ is a closed path, the definition above allows $q$ to be a subpath of any cyclic shift of $p$.
\end{defn}

\begin{prop}[{\cite[Proposition 4.13]{DGO17}\cite[Lemma 2.4]{HO13}\cite[Proposition 2.19]{Oya23}}]\label{prop: uniform bound for isolated components}
    Suppose $\{H_\lambda\}_{\lambda \in \Lambda} \hookrightarrow_{h} (G,X)$. There exists a constant $C$ such that for every geodesic $n$-gon $p$ in $\Gamma(G,X\sqcup \HH)$ and any isolated $H_\lambda$-component $a$ of $p$, we have
    \[ \widehat{d}_\lambda(a_-,a_+)\leq nC, \]
    where $a_- $ is the beginning of $a$ and $a_+$ is the end of $a$.
\end{prop}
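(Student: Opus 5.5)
The plan follows the standard argument of Dahmani--Guirardel--Osin. The idea is to find a single constant that works for triangles and then pass to $n$-gons by subdivision. The main input is the finiteness of a suitable relative generating set, which is what keeps the constant uniform.

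\emph{Step 1: the key finiteness set.} Let $\mathcal{C}$ be the set of closed paths in $\Gamma(G,X\sqcup\HH)$ formed by a geodesic $n$-gon together with one of its isolated components. Set
\[
\Omega_\lambda=\{h\in H_\lambda : h \text{ labels an isolated } H_\lambda\text{-component of some geodesic triangle}\}.
\]
Replacing the component $a$ by the other two sides of the triangle produces an $H_\lambda$-admissible path from $a_-$ to $a_+$. This path contains no edge of the coset $a_-H_\lambda$ precisely because $a$ is isolated; other components of $H_\lambda$ lie in different cosets, so their edges are not in the distinguished subgraph after translation. It follows that $\widehat d_\lambda(a_-,a_+)$ is finite. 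The real issue is uniformity of the bound.

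\emph{Step 2: uniform bound for triangles.} This is the main obstacle. I would use hyperbolicity via a linear isoperimetric inequality. The hyperbolicity of $\Gamma(G,X\sqcup\HH)$ means the relative presentation has a linear relative isoperimetric function, with finitely many relators $\mathcal R$ over $X\sqcup\HH$ (DGO, Theorem 4.24 / Lemma 4.2 setup). Take a van Kampen diagram $\Delta$ for the triangle, of area at most $L\cdot 3\,(\text{perimeter})$ after reducing to "$H_\lambda$-letter" cells. Following Osin's argument, the isolated component $a$ together with the $H_\lambda$-edges of relator cells forms a subdiagram. The label of $a$ is then a product of at most $\#\text{cells}$ elements of the finite set of $H_\lambda$-letters appearing in relators, plus boundary letters that are not in $a$'s coset. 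This shows $\widehat d_\lambda(a_-,a_+)$ is bounded by a constant times the number of cells adjacent to $a$'s coset. I would try first to bound that number independently of perimeter. The approach is to cut the geodesic triangle into pieces of bounded length using $\delta$-thinness, so that the relevant subdiagram has bounded perimeter, and then apply linear isoperimetry there. This yields a constant $C_3$ depending only on $\delta$ and $\mathcal R$.

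\emph{Step 3: $n$-gons.} Given a geodesic $n$-gon $p$ with isolated component $a$ on side $s_1$, draw geodesic diagonals from $a_-$'s vertex to triangulate $p$ into $n-2$ triangles. In each triangle, consider the $H_\lambda$-components lying in the coset $a_-H_\lambda$. If $a$ is isolated in the first triangle, Step 2 applies directly. If not, the component connected to $a$ lies on a diagonal, and $a$'s endpoints connect through $H_\lambda$ to that diagonal's component. I then induct, applying Step 2's bound (or the admissible-path bound) to each of the at most $n$ isolated pieces encountered. Summing gives $\widehat d_\lambda(a_-,a_+)\le nC$ after adjusting $C$. Isolatedness in $p$ is what guarantees that the chain of connected components never returns to a side of $p$ other than through diagonals.
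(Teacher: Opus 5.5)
The paper gives no proof of this statement; it quotes it from Dahmani--Guirardel--Osin. Your argument therefore has to stand on its own. Its overall architecture (a uniform bound for triangles, then a reduction of $n$-gons to triangles along a chain of connected components) is the right one. But there is a genuine gap exactly where you place the difficulty: the uniform triangle bound in Step 2 is never established.

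You propose to cut the triangle with $\delta$-short chords into a piece of bounded perimeter and apply linear isoperimetry there. You never check that $a$ is still an isolated component of that piece. A chord joining two sides near the coset $K=a_-H_\lambda$ can itself contain an edge of $K$. This happens, for instance, when $a$ lies near the centre of the tripod and the other sides pass through vertices of $K$ via $X$- or $H_\mu$-edges. Then $a$ is connected to a component of the small cycle, and nothing in your sketch bounds $\widehat d_\lambda(a_-,a_+)$. This is the same phenomenon you confront with diagonals in Step 3, but here it is the whole content of the proposition. Likewise, the set $\Omega_\lambda$ of Step 1 cannot be an input: by properness of $\widehat d_\lambda$, its finiteness is equivalent to the triangle case itself.

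In fact no isoperimetric inequality is needed, because your Step 1 observation is already quantitative. If $a$ is an isolated component of any cycle $c$, then $c\setminus a$ is $H_\lambda$-admissible, so $\widehat d_\lambda(a_-,a_+)\le \ell(c)$. The task is thus to find a cycle through $a$ of length $O(n)$ in which $a$ stays isolated.

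\emph{Triangles.} Since $K$ has diameter $1$, a path of length at most $\delta$ starting at a point $v$ with $d(v,a_+)>\delta+1$ contains no vertex of $K$. So place the cut points on the side at distance about $\delta+2$ from $a$, and close up with $\delta$-short chords to the fellow-travelling sides. If $a$ is near the internal point of its side, follow the other two sides past the centre of the tripod toward the third vertex, by a distance of order $\delta$, before closing up with a chord there. All chords then miss $K$, which gives a constant $C_3=C_3(\delta)$. The same works for components that go around a corner.

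\emph{$n$-gons.} Step 3 also needs repair. If you fan from a vertex of $p$, a diagonal can meet $K$ in its interior. Splitting off that component leaves a quadrilateral and a triangle, so your triangle bound does not apply as stated, and the count of ``at most $n$ pieces'' is unjustified. Fan instead from $a_-$ itself, taking the two diagonals to the endpoints of the side containing $a$ inside that side. A geodesic starting at $a_-\in K$ can meet $K$ only in its first edge $b_i$. Hence in each triangle $(a_-,v_i,v_{i+1})$ the edges in $K$ form a single isolated component $b_{i+1}^{-1}b_i$ through the corner $a_-$. The triangle inequality for $\widehat d_\lambda$ then yields $\widehat d_\lambda(a_-,a_+)\le (n-1)C_3$.
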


A first important consequence of the preceding $n$-gon inequality is the following combinatorial property, often referred to as \emph{asymptotic freeness}, which generalizes the almost malnormality of the hyperbolically embedded subgroups from Proposition \ref{Prop:maln}. This property has been proved to be a powerful tool in establishing structural results for operator algebras; e.g., see the recent work \cite{CDS23,CIOS3,AMAKCK,AMCFQI26}. As in the prior works \cite{CDS23,AMAKCK}, it will also play a key role in our proof of the unique Cartan subalgebra results in Section \ref{sec: W*-rigidity}.

\begin{thm}[{\cite[Theorem 2.8]{CDS23}}]\label{evenlength} Let $G$ be a group with a hyperbolically embedded subgroup $H$. Then, for every finite set $K\subset G\setminus H$, there exists a finite set $L\subset H$ such that for all $m,n\geq 1$,
\begin{equation*}
    ((H\setminus L)K)^{m}\cap (K(H\setminus L))^{n}=\emptyset.
\end{equation*}
Here, $(S_1S_2)^{m}=S_1S_2S_1S_2\cdots S_1S_2$ denotes the product obtained by repeating $S_1S_2$ exactly $m$ times.
\end{thm}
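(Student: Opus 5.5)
Fix a finite relative generating set $X$ with $\{H\}\hookrightarrow_h(G,X)$ (we may add $K$ to $X$, since finite enlargements of $X$ preserve hyperbolic embeddedness). Let $C$ be the constant of Proposition \ref{prop: uniform bound for isolated components}. Set $M=\max_{k\in K}|k|_X$, where $|k|_X$ is finite because $K\subset X$ after enlarging. By properness of $\widehat d$ on $H$, the set
$$L=\{h\in H:\ \widehat d(1,h)\le D\}$$
is finite for every choice of $D$. I would take $D$ large compared to $C$ (for instance $D=10C$); the point is that $D$ must not depend on $m,n$.

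Suppose an element $g$ lies in both $((H\setminus L)K)^{m}$ and $(K(H\setminus L))^{n}$. Write $g=h_1k_1\cdots h_mk_m=k'_1h'_1\cdots k'_nh'_n$. This gives a closed path $p$ in $\Gamma(G,X\sqcup\HH)$ made of $H$-edges labelled by the $h_i$ and by the inverse $h'_j$, interleaved with $X$-edges labelled by the $k$'s, each viewed as a single edge of $X$. Both words have "alternating" shape. Because $k\notin H$, consecutive $H$-edges within one word never lie in the same $H$-coset. The claim to prove is that some $H$-component of $p$ is isolated, or is connected only in a controlled way. Then Proposition \ref{prop: uniform bound for isolated components}, applied to $p$ viewed as a geodesic polygon, gives a contradiction with $\widehat d(1,h)>D$.

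The main obstacle is that $p$ has about $2(m+n)$ sides, so the bound $nC$ in Proposition \ref{prop: uniform bound for isolated components} grows with the length, and a fixed $L$ cannot absorb it. To avoid this, I would not apply the proposition to the whole loop. Instead I would use a minimal counterexample, choosing $m+n$ minimal, and look for an innermost pair of connected components. Among all pairs of connected $H$-components of $p$, pick a pair $(a,b)$ whose separating subpath $p_0$ is shortest. Connectedness gives an $H$-edge $e$ joining $a_+$ to $b_-$, or joining the appropriate endpoints. Then $p_0\cup e$ is a shorter loop, and by minimality every $H$-component strictly inside $p_0$ is isolated in it. The subpath $p_0$ still alternates $H$-edges with $K$-edges, and it is nonempty since $k\notin H$ forbids adjacent connected components.

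If $p_0$ contains an $H$-edge, I would obtain the contradiction from a bounded polygon. For this I would replace $p_0$ by geodesic pieces: each single $H$-edge and single $K$-edge is a geodesic segment of length $\le 1$. I would then apply the standard quasi-convexity estimate, used in the proof of Theorem~2.8 in \cite{CDS23}, which says that in a path whose $H$-components are all isolated and long, the path is a uniform quasi-geodesic. Long isolated components force the loop $p_0\cup e$ to have length comparable to the distance between its endpoints, and that distance is $1$. This contradiction is the step I expect to require the most care.

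The remaining case is a parity/"evenness" issue: a loop $p_0$ consisting only of $K$-edges. This would mean some product of $k$'s equals an element of $H$. That case is excluded by choosing the loop to come from the two different bracketings $(HK)^m$ versus $(KH)^n$. Matching the endpoints, the first letters $h_1$ versus $k'_1$ force the first component of $p$ to be matched with a component of the opposite word shifted by one position. Iterating the innermost argument then exhausts all components, leaving no possible matching.
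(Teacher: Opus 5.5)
The paper gives no argument for this statement (it is imported from \cite{CDS23}), so I am judging your proposal on its own. It has a genuine gap, exactly at the step you flag.

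The innermost-pair reduction is correct as far as it goes: by minimality of $p_0$, every $H$-component of $p_0$, and $e$ itself, is isolated in $q=p_0\cup e$. But it does not bound the number of sides of $q$. If $p_0$ contains $r$ edges labeled by $H$, then $q$ has $2r+2$ edges, and nothing controls $r$. Moreover, if no two components of $p$ are connected (a case you do not treat), then $q$ is $p$ itself. So Proposition~\ref{prop: uniform bound for isolated components} still only gives $\widehat d\le(2r+2)C$ for the components of $p_0$, which contradicts nothing once $D$ is fixed. The reduction trades one long loop for another. The whole content of the theorem is shifted into the ``standard quasi-convexity estimate'', which you neither state precisely nor prove, and which you attribute to the proof of the very statement being established. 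Even granting it, ``length comparable to the distance $1$'' is not itself a contradiction. A uniform $(\lambda,c)$-quasi-geodesic $p_0$ only satisfies $\ell(p_0)\le\lambda+c$, and you must then still run the polygon argument on the now short loop $q$, with $D>(\lambda+c+1)C$.

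What is missing is a mechanism that turns the $n$-gon bound into one uniform in the length of the loop. Using only Proposition~\ref{prop: uniform bound for isolated components} and hyperbolicity of $\Gamma(G,X\sqcup\HH)$, the standard mechanism is local-to-global.
\begin{itemize}
\item Fix $k$, depending only on the hyperbolicity constant, such that every $k$-local $(2,1)$-quasi-geodesic is a global $(\lambda',c')$-quasi-geodesic. Let $N$ bound the length of a closed $(\lambda',c')$-quasi-geodesic, and take $D>\max(k+1,N)\,C$.
\item For a subpath $r$ of $p$ with $\ell(r)\le k$, your innermost-pair argument now only involves cycles with at most $k+1$ sides. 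So it shows that the $H$-components of $r$ are pairwise non-connected.
\item Compare $r$ with a geodesic $s$ joining its endpoints; this is a polygon with at most $k+1$ sides. It shows that each component of $r$ is connected to a distinct component of $s$, whence $\ell(r)\le 2\,d(r_-,r_+)+1$.
\item Hence $p$ is a closed $(\lambda',c')$-quasi-geodesic, so $\ell(p)\le N$. One more application of the proposition to $p$ gives the contradiction.
\end{itemize}
Alternatively, the quasi-geodesicity lemma of \cite{DGO17} applies to $p$ directly. It concerns paths whose labels alternate between $X$-letters outside $H$ and $H$-letters of large $\widehat d$-length, and with it the innermost-pair step is not needed.

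Finally, your last paragraph is unnecessary and its reasoning does not hold up. Since $p_0$ alternates and begins and ends with $K^{\pm1}$-edges, an $H$-free $p_0$ would be a single edge labeled by an element of $K^{\pm1}\cap H=\emptyset$. The two bracketings $(HK)^m$ and $(KH)^n$ are used only at the junction vertices $1$ and $g$. There they make the loop label $h_1k_1\cdots h_mk_m\,(h'_n)^{-1}(k'_n)^{-1}\cdots(h'_1)^{-1}(k'_1)^{-1}$ cyclically alternating, so no two $H$-letters or two $X$-letters become adjacent. This is what you should check explicitly. The minimality of $m+n$ plays no role.
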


This can be rephrased in the following more convenient way to work with later on.

\begin{cor}[{\cite[Corollary 2.7]{AMCFQI26}}]\label{nontrivial} Let $G$ be a group with a hyperbolically embedded subgroup $H$. Then for every finite set $K\subset G\setminus H$, there exists a finite set $L\subset H$ such that the following holds: for every $\ell\geq 1$ and every $g_1,\dots,g_\ell\in K, h_1,\dots,h_\ell\in H\setminus L$, we have
$g_1h_1\cdots g_\ell h_\ell\not=1$.
\end{cor}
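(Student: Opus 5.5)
The plan is to derive Corollary \ref{nontrivial} directly from Theorem \ref{evenlength}, keeping the same sets $K$ and $L$. Given a finite set $K\subset G\setminus H$, let $L\subset H$ be the finite set provided by Theorem \ref{evenlength}. We may enlarge $L$ if needed, for instance to include $1$; enlarging $L$ only shrinks $H\setminus L$, so the emptiness conclusion survives.

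Suppose, for contradiction, that $g_1h_1\cdots g_\ell h_\ell=1$ with every $g_i\in K$ and every $h_i\in H\setminus L$. The aim is to produce an element lying in both $((H\setminus L)K)^{m}$ and $(K(H\setminus L))^{n}$ for some $m,n\geq 1$.

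\textbf{Case $\ell\geq 2$.} Split the word after the first pair. This gives
\[ g_1h_1 = (g_2h_2\cdots g_\ell h_\ell)^{-1}. \]
We would like the right-hand side to lie in $(H\setminus L)K$ products, but inverting brings in $K^{-1}$ and $(H\setminus L)^{-1}$. To handle this, replace $K$ at the outset by $K\cup K^{-1}$, which is still a finite subset of $G\setminus H$. Then take $L$ symmetric, using $L\cup L^{-1}$, which is harmless because enlarging $L$ preserves the conclusion. With these choices the sets $K$ and $H\setminus L$ are closed under inverses. Consequently
\[ (g_2h_2\cdots g_\ell h_\ell)^{-1}=h_\ell^{-1}g_\ell^{-1}\cdots h_2^{-1}g_2^{-1}\in((H\setminus L)K)^{\ell-1}. \]
On the other side, $g_1h_1\in K(H\setminus L)$. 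This contradicts Theorem \ref{evenlength} with $m=\ell-1$ and $n=1$.

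\textbf{Case $\ell=1$.} Here $g_1h_1=1$ forces $g_1=h_1^{-1}\in H$, which contradicts $K\subset G\setminus H$.

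The only real subtlety is this symmetrization step: we must make sure Theorem \ref{evenlength} is applied to the symmetrized finite set $K\cup K^{-1}$, and we must check that $K^{-1}$ still avoids $H$. The latter holds because $H$ is a subgroup. Everything else is bookkeeping on word lengths.
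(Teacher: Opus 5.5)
Your derivation from Theorem~\ref{evenlength} is correct: you symmetrize $K$ and $L$, then split off $g_1h_1$ to land in $((H\setminus L)K)^{\ell-1}\cap K(H\setminus L)$, which matches the paper's route, since the paper presents the corollary simply as a rephrasing of that theorem. The symmetrization is in fact unnecessary: conjugating the relation by $h_\ell$ gives $1=(h_\ell g_1)(h_1g_2)\cdots(h_{\ell-1}g_\ell)\in((H\setminus L)K)^{\ell}\cap(K(H\setminus L))^{\ell}$, which contradicts Theorem~\ref{evenlength} directly with $m=n=\ell$ and the original $K$.
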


We conclude this section with a second, more involved, application of the $n$-gon inequality. Specifically, we establish a separation property for subsets naturally associated with the peripheral subgroups. This result will play a key role in the proof of our main relative bi-exactness theorem in Section~\ref{sec: relhyperbolicimplies biexact}. To state it, we first recall the following definition.
 \begin{defn}
    Let $ \mathcal{S} $ be a family of subsets of $ G $. A subset $ T\subset G $ is called  \emph{small relative to $ \mathcal{S}$} or \emph{$\mathcal S$-small} if  $T \subset \bigcup_{i=1}^k s_iS_it_i$, for some $ s_i,t_i\in G $ and $S_i\in \mathcal{S}$. In particular, if the left and right $G$ translations permute $ \mathcal{S}$, then $T$ is small relative to $\mathcal{S}$ if $ T\subset \cup_{i=1}^k S_i $, for some $ S_i\in \mathcal{S} $.
\end{defn}

\begin{thm}\label{thm: decomposition of Jk} Let $G$ be a group together with $\{H_\lambda\}_{\lambda\in \Lambda} \hookrightarrow_h (G,X)$. 
    For every integer $k\geq 0$ consider the following collection of subsets of $G$,
    \[\mathbb A_k \coloneqq\{ a_0 H_{\lambda_1} a_1 H_{\lambda_2}a_2\cdots a_{k-1}H_{\lambda_k}a_k \,:\, a_i\in G, \lambda_i\in \Lambda \},\]
    where when $k=0$, we set $ \mathbb A_0 \coloneqq \{\{g\}:g\in G\} $.
    
    Fix $k\geq 1$ and let $T \coloneqq a_0H_{\lambda_1}a_1\cdots a_{k-1}H_{\lambda_k}a_k, \quad S\coloneqq b_0H_{\mu_1}b_1\cdots b_{k-1}H_{\mu_k}b_k\in \mathbb A_{k}$, where $a_i,b_i\in G$. Then either $ T\cap S $ is small relative to $\mathbb A_{k-1}$ or $ T=S $.
\end{thm}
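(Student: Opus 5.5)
The plan is to argue geometrically in the relative Cayley graph $\Gamma(G,X\sqcup\HH)$ using the isolated-component bound of Proposition~\ref{prop: uniform bound for isolated components}, and to treat $k=1$ separately as a model case.

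\textbf{Case $k=1$.} Write $a_0H_{\lambda}a_1=(a_0H_\lambda a_0^{-1})a_0a_1$ and $b_0H_\mu b_1=(b_0H_\mu b_0^{-1})b_0b_1$. These are right cosets of two conjugates of peripheral subgroups.
\begin{itemize}
\item If the intersection $T\cap S$ is nonempty, it is a coset of $a_0H_\lambda a_0^{-1}\cap b_0H_\mu b_0^{-1}$.
\item By Proposition~\ref{Prop:maln}, this intersection of conjugates is finite unless $\lambda=\mu$ and $b_0^{-1}a_0\in H_\lambda$.
\item If it is finite, $T\cap S$ is finite, hence $\mathbb A_0$-small.
\item Otherwise $T$ and $S$ are cosets of the same subgroup that meet, so $T=S$.
\end{itemize}

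\textbf{General $k$: normalization.} Fix $k\ge 1$. First I would normalize the words. If some $a_i\in H_{\lambda_i}$ with $\lambda_i=\lambda_{i+1}$, then $H_{\lambda_i}a_iH_{\lambda_{i+1}}=H_{\lambda_i}$. So $T$ itself lies in $\mathbb A_{k-1}$, and $T\cap S$ is trivially $\mathbb A_{k-1}$-small. More generally, I replace each $a_i$ ($1\le i\le k-1$) by a shortest representative of its double coset $H_{\lambda_i}a_iH_{\lambda_{i+1}}$, which does not change $T$, and do the same for $S$. I then fix geodesic words $\alpha_i,\beta_i$ in $\Gamma(G,X\sqcup\HH)$ for the $a_i$ and $b_i$.

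\textbf{General $k$: the polygon.} Take $x=a_0h_1a_1\cdots h_ka_k=b_0h_1'b_1\cdots h_k'b_k\in T\cap S$. This gives a closed path $\alpha_0e_1\alpha_1\cdots e_k\alpha_k\,(\beta_0e_1'\cdots e_k'\beta_k)^{-1}$, where $e_i$ and $e_i'$ are single edges labelled $h_i\in H_{\lambda_i}$ and $h_i'\in H_{\mu_i}$. This is a geodesic $n$-gon with $n=4k+2$, and $n$ is independent of $x$.

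If some edge $e_i$ (or $e_i'$) is an isolated component, Proposition~\ref{prop: uniform bound for isolated components} gives $\widehat d_{\lambda_i}(1,h_i)\le nC$. By properness, $h_i$ then lies in a finite set $F_i$. In that case
\[
x\in a_0H_{\lambda_1}\cdots H_{\lambda_{i-1}}(a_{i-1}fa_i)H_{\lambda_{i+1}}\cdots a_k \quad\text{for some } f\in F_i,
\]
and there are finitely many such sets in $\mathbb A_{k-1}$. So the set of all such $x$ is $\mathbb A_{k-1}$-small.

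\textbf{General $k$: all edges connected.} It remains to handle those $x$ for which every $e_i$ and $e_i'$ is connected to another component. Each $\alpha_j,\beta_j$ is a fixed finite path, so connections of $e_i$ to components inside them involve only finitely many cosets. Using the minimal choice of $a_j$, I would show such connections either force $h_i$ into a finite set, handled as above, or contradict minimality. Consecutive edges $e_i,e_{i+1}$ of $T$ cannot be connected to each other for the same reason.

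Hence each $e_i$ must be connected to some $e'_j$. A planarity/ordering argument (connected components of a polygon cannot cross, again via the $n$-gon inequality applied to sub-polygons) should force the matching to be $e_i\leftrightarrow e_i'$ for every $i$. Each such connection says the cosets agree:
\[
a_0h_1\cdots a_{i-1}H_{\lambda_i}=b_0h_1'\cdots b_{i-1}H_{\mu_i}, \quad\text{so } \lambda_i=\mu_i.
\]
Inducting on $i$, starting from $a_0H_{\lambda_1}=b_0H_{\mu_1}$ and peeling off one factor at a time (the $k=1$ argument), I would conclude $a_{i-1}^{-1}\cdots$ relations identifying $T=S$ whenever there is even one such $x$ outside the small part.

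\textbf{Main obstacle.} The main obstacle is this last combinatorial step: ruling out the ``crossed'' or partial matchings of components, and ensuring the normalization by minimal double-coset representatives really prevents the $h_i$-edges from connecting to edges inside the fixed segments $\alpha_j,\beta_j$. My first attempt would be to cut the polygon along a pair of connected components (joined by an edge in $\HH$) and apply Proposition~\ref{prop: uniform bound for isolated components} to the two smaller polygons, each with at most $n+1$ sides, to derive the required ordering.
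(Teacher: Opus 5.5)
Your skeleton matches the paper's: the polygon from two decompositions of $x\in T\cap S$, the isolated-component bound forcing some $h_i$ into a finite set (so those $x$ lie in finitely many members of $\mathbb A_{k-1}$), and the endgame. In that endgame, parallel connections for a single $x$ give $b_0H_{\lambda_1}=a_0H_{\lambda_1}$, $H_{\lambda_i}b_iH_{\lambda_{i+1}}=H_{\lambda_i}a_iH_{\lambda_{i+1}}$ and $H_{\lambda_k}b_k=H_{\lambda_k}a_k$, hence $T=S$. The genuine gap is the step you flag yourself: it is left open, and the devices you propose for it (minimal double-coset representatives plus a planarity argument) do not suffice.

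Write $P_i=a_0h_1a_1\cdots h_{i-1}a_{i-1}$ and $Q_j=b_0h_1'b_1\cdots h_{j-1}'b_{j-1}$ for the initial vertices of $e_i$ and $e_j'$. Because you keep $X$ fixed, the segments $\alpha_j,\beta_j$ may carry components of their own. Suppose $e_i$ is connected to a component of $\beta_j$ ($j\ge 1$) having a vertex $Q_jh_j'v$, where $v$ is a prefix of the fixed word for $b_j$. Then all you get is
\[
x\in a_0H_{\lambda_1}\cdots a_{i-1}H_{\lambda_i}v^{-1}b_jH_{\mu_{j+1}}\cdots H_{\mu_k}b_k
\quad\text{and}\quad
x\in b_0H_{\mu_1}\cdots b_{j-1}H_{\mu_j}vH_{\lambda_i}a_iH_{\lambda_{i+1}}\cdots H_{\lambda_k}a_k,
\]
with $i+k-j$ and $j+1+k-i$ peripheral factors respectively.
\begin{itemize}
\item For $j\in\{i-1,i\}$ neither count is below $k$, and minimality of the $a$'s says nothing about components of the $\beta$'s. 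For $j=i<k$, minimality of $b_i$ even forbids $e_i\leftrightarrow e_i'$.
\item $a_0,a_k,b_0,b_k$ are not normalized, so $e_1$ may connect to a component of $\alpha_0$ with no contradiction.
\item Connections $e_i\leftrightarrow e_j$ with $|i-j|\ge 2$ are not addressed.
\end{itemize}
So ``each $e_i$ must be connected to some $e_j'$'' is unjustified. The planarity/ordering claim for crossed matchings is neither proved nor supplied by the $n$-gon inequality.

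The paper sidesteps all of this with two moves. First, adding finitely many elements to $X$ preserves hyperbolic embeddedness, so it puts every nontrivial coefficient $a_i,b_j^{-1}$ into $X$. Each coefficient is then a single $X$-edge, which is not a component. The polygon becomes an $N$-gon with $N\le 4k+2$ whose only components are the $h_i$- and $h_j'$-edges, and no minimal representatives are needed.

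Second, non-parallel matchings, crossed or not, need not be ruled out; they are absorbed into a small set by the same factor counting you used in the isolated case. If $e_i$ is connected to $e_j'$ with $i<j$, then $\lambda_i=\mu_j$ and $Q_j=P_i\eta$ with $\eta\in H_{\lambda_i}$, so
\[
x=P_i\eta h_j'b_j\cdots h_k'b_k\in a_0H_{\lambda_1}a_1\cdots a_{i-1}H_{\lambda_i}b_jH_{\mu_{j+1}}\cdots H_{\mu_k}b_k .
\]
This is a product with $i+k-j<k$ factors and coefficients from a fixed finite set. The case $j<i$ and connections $e_i\leftrightarrow e_j$ are handled identically. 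The union $B$ of these finitely many sets is $\mathbb A_{k-1}$-small.

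Hence take any $x\in T\cap S$ outside $B$ that admits decompositions with all $h_i,h_i'$ outside a fixed finite ball whose radius exceeds the isolated-component bound; such $x$ exist unless $T\cap S$ is small. For this $x$, $e_i\leftrightarrow e_i'$ holds for every $i$, and your final step gives $T=S$.
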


\begin{proof}
We may assume $b_0=b_k=1$. Moreover, whenever
$a_i\in H_{\lambda_i}$ or $a_i\in H_{\lambda_{i+1}}$, we replace $a_i$
by $1$, and similarly for the $b_i$'s. Suppose that $ T\cap S $ is not $\mathbb A_{k-1}$-small, we will show that $T=S$.

For the two fixed subsets $T,S\in \mathbb A_k$ only, we enlarge the generating set $X$ so that it contains all nontrivial coefficients
$a_i,b_j^{-1}$, since adding finitely many elements to $X$ does not change the hyperbolic embeddedness. Let $\widehat d_\lambda$ denote the associated relative metric
on $H_\lambda$.

Fix $R\ge 6kC$, where $C$ is the constant in Proposition
\ref{prop: uniform bound for isolated components} for our $X$. For each $\lambda\in\Lambda$, put
\[
B_{\lambda,R}
 \coloneqq\{h\in H_\lambda:\widehat d_\lambda(1,h)\leq R\},
\]
and $ B^c_{\lambda,R}= H_\lambda\backslash B_{\lambda,R} $. Define
\[
T_R=a_0B_{\lambda_1,R}^ca_1\cdots a_{k-1}B_{\lambda_k,R}^ca_k,
\qquad
S_R=B_{\mu_1,R}^cb_1\cdots b_{k-1}B_{\mu_k,R}^c.
\]
Since each $B_{\lambda,R}$ is finite, 
$T\setminus T_R$ and $S\setminus S_R$ are $\mathbb A_{k-1}$-small.
Hence $T_R\cap S_R$ is still not $\mathbb A_{k-1}$-small.

Let $g\in T_R\cap S_R$. Choose decompositions
\[
g=a_0h_1a_1\cdots a_{k-1}h_ka_k
=r_1b_1\cdots b_{k-1}r_k,
\]
with $\widehat d_{\lambda_i}(1,h_i)>R$ and
$\widehat d_{\mu_i}(1,r_i)>R$.

These two expressions determine a geodesic $N$-gon, $N\le 4k$,
whose boundary label is
\[
a_0h_1a_1\cdots a_{k-1}h_ka_k
r_k^{-1}b_{k-1}^{-1}\cdots b_1^{-1}r_1^{-1}.
\]
More precisely, the edges for the geodesic $N$-gon are labeled by $ h_i\in H_{\lambda_i}, r_i^{-1}\in H_{\mu_i}$, and those nontrivial $ a_i,b^{-1}_i \in X$.

For each $i$, let
\[
P_i=a_0h_1a_1\cdots h_{i-1}a_{i-1},
\qquad
Q_i=r_1b_1\cdots r_{i-1}b_{i-1}.
\]
Then the edges labeled by $h_i$ and $r_i^{-1}$ are the
$H_{\lambda_i}$- and $H_{\mu_i}$-components
$[P_i,P_ih_i]$ and $[Q_ir_i,Q_i]$.

Since $N\le 4k$, Proposition
\ref{prop: uniform bound for isolated components} implies that every
isolated component has relative length at most $4kC$. Since $\widehat d_{\lambda_i}(1,h_i)>R\ge 6kC$,
and similarly for the $r_i$'s, none of these components can be isolated.

\medskip
\noindent\textbf{Claim.}
For fixed $T $ and $S$, there exists an $\mathbb A_{k-1}$-small subset $B$, such that if some component $[P_i,P_ih_i]$ is connected to a component other than
$[Q_ir_i,Q_i]$, then $g\in B$.

\begin{subproof}[Proof of the Claim]

Let
\[
B\coloneqq
\bigcup_{c_0,\ldots,c_s}
c_0H_{l_1}\cdots H_{l_s}c_s,
\]
where the union is taken over all integers $s<k$, all elements $c_0,\ldots,c_s\in
\{1,a_0,\ldots,a_k,b_1,\ldots,b_{k-1}\}$,
and each subgroup $H_{l_i}$ is one of $H_{\lambda_1},\ldots,H_{\lambda_k},
H_{\mu_1},\ldots,H_{\mu_k}$.
By construction, $B$ is $\mathbb A_{k-1}$-small.

Suppose that the component $[P_i,P_ih_i]$ is connected to a component
other than $[Q_ir_i,Q_i]$. We distinguish two possibilities which we analyze separately. Note that since $ a_i,b_j\in X\sqcup \{1\} $ by assumption, they contain no component by definition.

\smallskip
\noindent\textbf{Case (1):}
$P_iH_{\lambda_i}=P_jH_{\lambda_j}$ for some $j\neq i$.

\smallskip
\noindent\textbf{Case (2):}
$P_iH_{\lambda_i}=Q_jH_{\mu_j}$ for some $j\neq i$.

Without loss of generality assume $i<j$.
Suppose we are in Case (2). Then $Q_j=P_i\eta$ for some
$\eta\in H_{\lambda_i}$. Therefore
\[
g
 =Q_jr_jb_j\cdots b_{k-1}r_k
 =P_i\eta r_jb_j\cdots b_{k-1}r_k
 \in
a_0H_{\lambda_1}\cdots
a_{i-1}H_{\lambda_i}
b_j\cdots
H_{\mu_k}
\subset B.
\]

\noindent Similarly, in Case (1) we have 
$g\in
a_0H\cdots a_{i-1}Ha_j\cdots Ha_{k-1}Ha_k
\subset B$.
\end{subproof}


Since $B$ is $\mathbb A_{k-1}$-small while
$T_R\cap S_R$ is not, we may choose $g\in (T_R\cap S_R)\setminus B$. For this element, every $h_i$-component is connected to its corresponding $r_i^{-1}$-component by the claim. Hence
\[
P_iH_{\lambda_i}=Q_iH_{\mu_i},
\qquad
\lambda_i=\mu_i,\quad \text{ for all }i.\]

For $i=1$, we have $ P_1H_{\lambda_1}=Q_1H_{\lambda_1} $. Choose an element $ \eta_1\in H_{\lambda_1} $ such that $ Q_1=P_1\eta_1 $, we have $a_0\eta_1=1$ and thus $a_0=\eta^{-1}_1\in H_{\lambda_1}$. For $i\ge 2$, choose $\eta_i\in H_{\lambda_i}$ such that
$Q_i=P_i\eta_i$. Since
$P_{i+1}=P_ih_ia_i$, and 
$Q_{i+1}=Q_ir_ib_i$, we obtain $h_ia_i\eta_{i+1}
=\eta_ir_ib_i$, which implies $b_i\in H_{\lambda_i}a_iH_{\lambda_{i+1}}$ for $i\geq 1$. Finally, we have $Q_k=P_k\eta_k=gr_k^{-1}=ga_k^{-1}h_k^{-1}\eta_k$,
which implies $a_k=h_k^{-1}\eta_k r_k\in H_{\lambda_k}$.

In conclusion, all coefficients differ only by multiplication by adjacent
peripheral subgroup elements, which does not change the corresponding
product set. Therefore, $T=S$.\end{proof}

Note that when $ k=1 $, the above condition is precisely the mutual almost malnormality in Proposition \ref{Prop:maln}. Therefore, the above theorem can also be considered as a strengthening of almost malnormality.

\section{Relative Biexactness of Relative Hyperbolic Groups}\label{sec: relhyperbolicimplies biexact}

In this section we establish our main relative biexactness result. We start by recalling the notion of relative biexactness.

\begin{defn}
    A \emph{boundary piece $I$} of $G$ is a left and right $G$-invariant $C^*$-subalgebra of $ \ell^\infty(G) $.
    If $ \mathcal{S} $ is a family of subsets in $ G $, we denote $ I_{\mathcal{S}} $ the smallest boundary piece that contains $\ell^\infty(A)\subset \ell^\infty(G) $ for all $A\in \mathcal{S} $. Equivalently, a function  $ f\in \ell^\infty(G) $ belongs to $ I_{\mathcal{S}} $ if and only if for each $\varepsilon>0$, $\{g\in G: |f(g)|>\varepsilon\} $ is small relative to $ \mathcal{S} $.
\end{defn}

Note that every boundary piece $I$ can be written as $ I=I_{\mathcal S} $ for some $ \mathcal{S} $. Indeed, we can simply take $ \mathcal{S}:= \{ F\subset G: 1_F\in I \} $, where $1_F$ is the characteristic function supported on $F$.

When the family $\mathcal S$ is given by a family of subgroups $\mathcal S= \mathcal P=\{H_\lambda\}_{\lambda\in \Lambda}$, the boundary piece $c_0(G,\mathcal{P})\coloneqq  I_{\mathcal{S}} $ generated by $ \mathcal P $ is precisely the subgroup boundary piece defined in \cite[Definition 15.1.1]{BO08}.

\begin{defn}\label{relbiexactdef}
    Let $G$ be a discrete group and $ I\subset \ell^\infty(G) $ be a left and right $G$-invariant subalgebra. We say that $G$ is \emph{biexact relative to $I$} if the two-sided $G\times G$ action on $ \ell^\infty(G)/I $ is topologically amenable in the sense of Anantharaman-Delaroche.
    
    In particular, we say $G$ is \emph{biexact relative to a family of subgroups $ \mathcal{P}= \{H_\lambda\}_{\lambda\in \Lambda}$} if $G$ is biexact relative to $I=c_0(G,\mathcal P)$.
\end{defn}

We also recall the definition of an amenable action in the case where $\Gamma$ is a discrete group and $X$ is a locally compact Hausdorff space. The action $\Gamma\curvearrowright X$ is said to be amenable if there exists a net of continuous maps
\[
m_i\colon X\to \operatorname{Prob}(\Gamma)
\]
such that, for every $g\in\Gamma$,
\[
\|m_i(gx)-g\cdot m_i(x)\|_1\longrightarrow 0
\]
uniformly for $x$ in compact subsets of $X$. Here,
$\operatorname{Prob}(\Gamma)$ denotes the space of probability measures on $\Gamma$, and $\Gamma$ acts on $\operatorname{Prob}(\Gamma)$ by left translation. For an abelian $C^*$-algebra $C=C_0(X)$ with spectrum $X$, we also say a action of $ \Gamma $ on $ C=C_0(X) $ is amenable if the induced action on $X$ is amenable.

\subsection{Arrays on relative hyperbolic groups} One approach to studying relative bi-exactness on groups is through the more geometric notion of an array \cite{CS11}, which generalizes the classical concept of group length function.

\begin{defn}\label{defn: array}
Given a unitary representation $\pi: G \rightarrow \mathscr U(\mathcal H)$, we say that a map $\ur: G \rightarrow \mathcal H$ is an \emph{array} if it satisfies the following \emph{bounded equivariance} condition: for every $h,k\in G$,
\begin{equation}
\sup_{g\in G} \|\ur(hgk)-\pi_h(\ur(g))\|_{\mathcal H}<\infty.
\end{equation}

\end{defn}

If $I$ is a boundary piece of $G$, we say that $\ur$ is \emph{proper relative to} $I$ if for every $c\geq 0$ the $c$-radius ball with respect to $\ur$
\begin{equation}\label{relprop}
B^{\ur}_c\coloneqq  \{ g\in G \,:\, \|\ur(g)\|_{\mathcal H}\leq c\} \text{ is small relative to } I,
\end{equation}
i.e. small relative to the family of subsets $\mathcal{S}:=\{ F\subset G: 1_F\in I \} $.

When $I=c_0(G)$, the array $\ur$ is simply called \emph{proper}, since in this case \eqref{relprop} implies that all finite-radius balls $B^{\ur}_c$ are finite sets.

This notion is closely connected to relative bi-exactness; indeed, in \cite{CSU13,PV12} it was shown the following
\begin{thm}\label{thm: biexact=properarrays} $G$ is bi-exact relative to $I$ if and only if $G$ is exact and it admits an array with values in a weakly-$\ell^2$ representation that is proper relative to $I$.
\end{thm}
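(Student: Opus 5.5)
This is the cited characterization from \cite{CSU13,PV12}. I would prove the two implications separately, passing between topological amenability of $G\times G\curvearrowright \ell^\infty(G)/I$ and relatively proper arrays via positive definite kernels.

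\emph{Forward direction.} Assume $G$ is bi-exact relative to $I$. Exactness of $G$ follows first. The quotient map $\ell^\infty(G)\to\ell^\infty(G)/I$ is unital and equivariant, so pulling back an approximately invariant net $m_i$ on the spectrum $\Omega$ of $\ell^\infty(G)/I$ and restricting to $G\times\{1\}$ (or using that the left action on $\beta G$ factors through) gives amenability of $G\curvearrowright\beta G$. This is exactness.

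Next, amenability of $G\times G\curvearrowright \Omega$ yields, after the standard Anantharaman-Delaroche reformulation, maps $\xi_n\colon G\to \ell^2(G\times G)$ with the following properties:
\begin{itemize}
\item $\|\xi_n(g)\|=1$;
\item $g\mapsto \xi_n(g)$ is continuous into the boundary, so that $\|\xi_n(sgt)-(s,t^{-1})\cdot\xi_n(g)\|\to 0$ modulo $I$, meaning the set where this exceeds $\varepsilon$ is $I$-small.
\end{itemize}
Then set
\[
\ur(g)=\bigoplus_n c_n\bigl(\xi_n(g)\otimes\xi_n(g)-\ldots\bigr),
\]
or more cleanly use the Schoenberg-type construction: from a proper conditionally negative type kernel one gets a $1$-cocycle-like array into $\bigoplus \ell^2(G)$. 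Here the representation is the restriction of the $G\times G$ regular representation to the diagonal $g\mapsto(g,g)$, which is weakly contained in (in fact a multiple of) $\ell^2(G)$, hence weakly-$\ell^2$. The weights $c_n$ are chosen so that bounded equivariance holds and so that the sublevel sets $B_c^{\ur}$ are $I$-small.

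\emph{Reverse direction.} Given exactness and a weakly-$\ell^2$ array $\ur$ proper relative to $I$, form the kernels
\[
\phi_t(g,h)=\exp\bigl(-t\|\ur(g)-\ur(h)\|^2\bigr)
\]
(or $\langle \eta_t(g),\eta_t(h)\rangle$ with $\eta_t(g)=\exp^{\oplus}(\sqrt t\,\ur(g))$). Bounded equivariance makes $\phi_t(sgt',g)\to 1$ uniformly as $t\to 0$. Properness relative to $I$ makes the functions $g\mapsto\langle \pi_s\eta_t(g),\eta_t(g)\rangle$ vanish modulo $I$ for $s\neq1$ in a suitable sense. Combined with exactness, which supplies nuclearity of $\ell^\infty(G)\rtimes_r G$ and amenability of $G\curvearrowright\beta G$, this gives a $G\times G$-equivariant u.c.p.\ approximation showing that $\ell^\infty(G)/I\rtimes_r(G\times G)$ is nuclear. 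That nuclearity is equivalent to amenability of the action. Equivalently, one verifies Ozawa's condition (bi-exactness via the map $C^*_\lambda(G)\otimes C^*_\rho(G)\to B(\ell^2G)/K_I$ being min-continuous).

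\emph{Main obstacle.} The hard step is the forward construction: turning the continuous Reiter functions on the non-metrizable spectrum of $\ell^\infty(G)/I$ into a single Hilbert-space-valued array whose sublevel sets are genuinely $I$-small, and whose representation is weakly-$\ell^2$. This requires a careful diagonal argument over a countable exhaustion, so I would follow \cite{PV12} there.
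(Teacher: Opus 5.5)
The paper does not prove this statement; it quotes it from Chifan--Sinclair--Udrea and Popa--Vaes. Your sketch has steps that fail as written in both directions, and it locates the difficulty in the wrong place.

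\emph{Forward direction.} Your parenthetical argument for exactness is the right one: extend $g\mapsto g\omega$, for some $\omega\in\Omega$, to a left-equivariant map $\beta G\to\Omega$ and pull the Reiter maps back. It needs $\Omega\neq\emptyset$, i.e.\ $1\notin I$. If $1\in I$, the left-hand side holds vacuously under Definition~\ref{relbiexactdef}, which is why exactness is built into Ozawa's definition.

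The real problem is the array. The condition $\sup_g\|\ur(hgk)-\pi_h\ur(g)\|<\infty$ involves only a left action, so the $G\times G$-equivariant Reiter data must be made right-invariant. To do this, push $\mu_n(g)\in\mathrm{Prob}(G\times G)$ forward along the first-coordinate projection and take square roots. This gives $\xi_n\colon G\to\ell^2(G)$ with $\|\xi_n(hgk)-\lambda_h\xi_n(g)\|$ small off an $I$-small set. Restricting to the diagonal does not achieve this, since it compares $\xi_n(g)$ with $\xi_n(hgh^{-1})$ rather than with $\xi_n(hgk)$.

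Your formula for $\ur$ is also incomplete. In its natural reading $\ur(g)=\bigoplus_n c_n\xi_n(g)$, with constants $c_n$ and unit vectors, $\|\ur(g)\|^2=\sum_n c_n^2$ is independent of $g$. So no choice of weights makes $B^{\ur}_c$ small, and the Schoenberg remark presupposes a relatively proper kernel you never produce.

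The missing idea is a $g$-dependent cutoff. Take finite symmetric sets $E_n\nearrow G$ and increasing $I$-small sets
\[F_n\supseteq E_n\cup E_nF_{n-1}E_n\cup\{g:\max_{h,k\in E_n}\|\xi_n(hgk)-\lambda_h\xi_n(g)\|>2^{-n}\},\]
and set $\ur(g)=\bigoplus_n 1_{G\setminus F_n}(g)\,\xi_n(g)\in\bigoplus_n\ell^2(G)$.
\begin{itemize}
\item Properness: $\|\ur(g)\|^2=\#\{n:g\notin F_n\}$, so $B^{\ur}_c\subseteq F_{\lfloor c^2\rfloor+1}$ is $I$-small.
\item Bounded equivariance: fix $h,k$. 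Apart from finitely many indices, each $n$ with $g,hgk\notin F_n$ contributes at most $4^{-n}$. The nesting $E_nF_{n-1}E_n\subseteq F_n$ allows at most one further index where exactly one of $g,hgk$ lies in $F_n$.
\end{itemize}
This direction is elementary.

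\emph{Reverse direction.} Your kernel claims are false. Bounded equivariance controls $\|\ur(sgt')-\pi_s\ur(g)\|$, not $\|\ur(sgt')-\ur(g)\|$. Relative properness also does not make $\langle\pi_s\eta_t(g),\eta_t(g)\rangle$ vanish at infinity.

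A concrete counterexample: take $G=\mathbb F_2=\langle a,b\rangle$ and the proper array $\ur(g)=1_{[1,g]}$ in $\ell^2$ of the edges of the Cayley tree.
\begin{itemize}
\item $\|\pi_a\ur(a^N)-\ur(a^N)\|^2=2$ for all $N$, so the normalized Gaussian coefficient at $s=a$ equals $e^{-t}$ along $a^N\to\infty$.
\item $\|\ur(ba^N)-\ur(a^N)\|^2=2N+1$, which refutes uniform convergence of $\phi_t(bg,g)\to1$.
\end{itemize}
The subsequent nuclearity of the boundary crossed product is simply asserted.

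What you are missing is normalization. Off the $I$-small set $B^{\ur}_1$, put $\xi(g)=\ur(g)/\|\ur(g)\|$. Then
\[\|\xi(hgk)-\pi_h\xi(g)\|\le 2\|\ur(hgk)-\pi_h\ur(g)\|/\|\ur(g)\|<\varepsilon\]
outside the $I$-small set $B^{\ur}_{2C_{h,k}/\varepsilon}$, where $C_{h,k}$ is the equivariance constant.

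The genuine work is passing from this asymptotically invariant field in a \emph{weakly}-$\ell^2$ representation to amenability of the boundary action. Set $V\delta_g=\delta_g\otimes\xi(g)$. The map $\lambda_s\otimes\rho_t\mapsto V^*(\lambda_s\otimes\pi_s)(\rho_t\otimes1)V$ is min-continuous, because $\pi\prec\lambda$ and $(\lambda_s\otimes\lambda_s)(\rho_t\otimes1)$ is the regular representation of $G\times G$. It agrees with $\lambda_s\rho_t$ modulo $I$-compact operators. For exact $G$, this C$^*$-algebraic condition of Ozawa is equivalent to amenability of $G\times G\curvearrowright\mathrm{Sp}(\ell^\infty(G)/I)$ (as in Brown--Ozawa, Ch.~15, and Popa--Vaes).

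So the step you defer to Popa--Vaes is routine, and the step you treat as routine is where the real input lies.
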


\medskip

Using the Mineyev-Yaman bicombing \cite{MY09}, Oyakawa was able to show in \cite{Oya23} that any relative hyperbolic group $G$ admits a class of arrays which witness a weak, yet natural, form of relative biexactness with respect to its canonical peripheral structure $\mathcal P$.

\begin{thm}[{\cite[Proposition~3.1, Remark ~3.2]{Oya23}}]\label{array-from-the-group}
    Let $G$ be a finitely generated group that is hyperbolic relative to a collection $\mathcal P=\{H_\lambda\}_{\lambda \in \Lambda}$ of subgroups of $G$. Then there exist a finite generating set $X$ of $G$, a 2-vertex-connected fine hyperbolic graph $\mathcal G$ on which $G$ acts without edge inversion and an array $\textbf{r}:G\rightarrow (\ell^2\mathcal G,\pi_{G})$ satisfying the following: 
    \begin{enumerate}
        \item The edge stabilizer is trivial for any edge $E(\mathcal G)$.
        \item For any $g\in G$, we get $d_{\tilde{\Gamma}}(1,g)\leq \frac{1}{2}\|\textbf{r}(g)\|^2_2$,
        where $\tilde{\Gamma}$ is the coned-off Cayley graph of $G$ with respect to $X$.
        \item The representation $(\ell^2\mathcal G,\pi_{G})$ is weakly-$\ell^2$. 
    \end{enumerate}

\end{thm}

This leads to a specific family of arrays on $G$ whose finite-radius balls have a structure that is very close to relative properness. 

\begin{cor}[{\cite[Proposition 3.2]{AMAKCK}}]\label{array-from-the-group} Let $G$ be a hyperbolic group relative to a finite collection of subgroups $\{H_1,\ldots, H_k \}$. Then there exist a weakly-$\ell^2$ representation $\sigma: G\rightarrow \mathscr U(\mathcal H)$ and an array $\textbf{r} :G\rightarrow \mathcal H$ such that for every $c\geq 0$ there exists $d_c>0$ for which we have $B^{\textbf{r}}_c\coloneqq \{ g\in G: \|\textbf{r}(g)\|\leq c\}\subseteq H_{i_1}^{a_1}H_{i_2}^{a_2}\cdots H_{i_k}^{a_k}F$ for some finite subset $F\subseteq G$ where $|F|,|a_j|<d_c$. Here $H_i^a:= aHa^{-1}  $.    
\end{cor}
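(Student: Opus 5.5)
We deduce the corollary from Theorem~\ref{array-from-the-group} (Oyakawa's array) by turning the inequality $d_{\tilde\Gamma}(1,g)\le \tfrac12\|\mathbf r(g)\|^2$ into a description of $B^{\mathbf r}_c$. Take $\sigma=\pi_G$ on $\mathcal H=\ell^2\mathcal G$ and $\mathbf r$ as in Theorem~\ref{array-from-the-group}. If $\|\mathbf r(g)\|\le c$, then $d_{\tilde\Gamma}(1,g)\le c^2/2$. Here $\tilde\Gamma$ is the coned-off Cayley graph with respect to the finite generating set $X$. Its geometry is the same as that of the relative Cayley graph $\Gamma(G,X\sqcup\HH)$: each cone point over a coset $gH_i$ lets one cross that coset in two edges. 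Hence $d_{\tilde\Gamma}(1,g)\le N$ implies that $g$ admits a word of length at most $2N$ over the alphabet $X\sqcup\HH$. Set $N_c=\lceil c^2\rceil$.

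Next we normalize such words. A word of length at most $N_c$ in $X\sqcup\HH$ has the form
\[
g=x_0h_1x_1h_2\cdots x_{m-1}h_mx_m,
\]
where $m\le N_c$, each $h_j$ lies in some $H_{i_j}$, and each $x_j$ is a word in $X$ of length at most $N_c$. We rewrite this by moving the $X$-words to the right:
\[
g=(y_1h_1y_1^{-1})(y_2h_2y_2^{-1})\cdots(y_mh_my_m^{-1})\,y_{m+1},
\]
where $y_j=x_0x_1\cdots x_{j-1}$ and $y_{m+1}=x_0\cdots x_m$. Each $y_j$ has word length at most $N_c^2$ with respect to $X$. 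Hence
\[
g\in H_{i_1}^{a_1}\cdots H_{i_m}^{a_m}F,
\]
with $a_j=y_j$ and $F$ the finite ball of radius $N_c^2$ in $(G,X)$.

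Two bookkeeping points remain. First, the number of factors $m$ must be bounded independently of $g$, and it is, by $N_c$. Second, the whole ball $B^{\mathbf r}_c$ must lie in a finite union of such sets, with finitely many choices of $(a_j,i_j)$, all bounded by $d_c$. Both follow because all parameters range over finite sets determined by $c$. The statement's notation $H^{a_1}_{i_1}\cdots H^{a_k}_{i_k}F$ should be read in this sense: a finite union of such products with bounded length and bounded conjugators. Shorter products are included by allowing trivial factors, since $1\in H_i$. Taking $d_c$ larger than $N_c^2$, than the number of factors, and than the size of $F$ gives the claim.

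The main obstacle is the first step: comparing $d_{\tilde\Gamma}$ with word length in $X\sqcup\HH$. This needs the precise convention of the coned-off graph used in \cite{Oya23}. We would check that a geodesic in $\tilde\Gamma$ from $1$ to $g$ alternates between $X$-edges and pairs of cone edges $(g'\to c(g'H_i)\to g'h)$. Each such pair contributes exactly one letter from $H_i$, which gives word length at most $d_{\tilde\Gamma}(1,g)$. Once this is in place, everything else is finite combinatorics.
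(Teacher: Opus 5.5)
Your argument is correct and is the intended one. The paper gives no separate proof: it imports the statement from \cite[Proposition 3.2]{AMAKCK} as a consequence of Oyakawa's bound $d_{\tilde\Gamma}(1,g)\le\frac12\|\mathbf r(g)\|^2$. Your deduction is exactly that route: each $g\in B^{\mathbf r}_c$ is a word of length at most $\lceil c^2\rceil$ over $X\sqcup\HH$, which you rewrite as boundedly many conjugates $H_{i_j}^{y_j}$ with $y_j$ in a bounded ball, times an element of a finite ball.

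You are also right to read the statement as a finite union of such products, with the number of factors bounded in terms of $c$ rather than equal to the number of peripheral subgroups. That is precisely what Corollary~\ref{cor: relative biexact to Jinf} needs, namely smallness of $B^{\mathbf r}_c$ relative to $\bigcup_m\mathbb A_m$.
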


This result was instrumental in Oyakawa's proof that relatively hyperbolic groups with bi-exact peripheral subgroups are themselves bi-exact. As we shall see shortly, it will also play a key role in the proof of our main theorem throughout the remainder of this section.

\vskip 0.1in 
 In preparation of the proof of our main result Theorem \ref{thm: main} we introduce more relevant notation. Fix a group $G$ together with peripheral subgroups $\mathcal{P}\coloneqq \{H_{\lambda}\}_{\lambda\in \Lambda}$. For every integer $k\geq 0$ we define the boundary piece $J_k=J_k(\mathcal{P})\subset \ell^\infty(G)$ to be the the boundary piece generated by $ \mathbb A_k\coloneqq \{ a_0H_{\lambda_1}a_1H_{\lambda_2}a_2\cdots a_{k-1}H_{\lambda_k}a_k\,|\, a_0,\ldots,a_k \in G,\lambda_i\in \Lambda\} $. By construction, we have  $\mathbb A_{k} \subset \mathbb A_{k+1}$ for all $k$. In particular, we have the following tower
\begin{equation}\label{filtration} c_0(G,\mathcal{P})\eqqcolon J_1\subset J_2\subset \cdots \subset J_k\subset \cdots \subset J_\infty \coloneqq \overline{ \bigcup_k J_k }\subset \ell^\infty(G).\end{equation}
The tower \eqref{filtration} should be viewed as a grading of the peripheral structure $\mathcal P$ of  $G$.

\begin{cor}\label{cor: relative biexact to Jinf}
     Let $G$ be a finitely generated group that is hyperbolic relative to a finite family of  subgroups $\mathcal{P}=\{H_\lambda\}_{\lambda\in \Lambda}$, then $G$ is biexact relative to $ J_\infty $.
\end{cor}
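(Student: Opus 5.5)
We will deduce the statement from Theorem \ref{thm: biexact=properarrays}, using the array of Corollary \ref{array-from-the-group} (the \cite{AMAKCK} version). By Theorem \ref{thm: biexact=properarrays}, it suffices to check two things: that $G$ is exact, and that there is an array into a weakly-$\ell^2$ representation which is proper relative to $J_\infty$.

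\emph{Array.} Take $\sigma$ and $\ur$ as in Corollary \ref{array-from-the-group}, with $\Lambda=\{1,\dots,k\}$. Fix $c\geq 0$. Then
\[
B^{\ur}_c\subseteq H_{i_1}^{a_1}\cdots H_{i_k}^{a_k}F,
\]
with $F$ finite. Write $H_{i}^{a}=aH_ia^{-1}$ and expand the product:
\[
H_{i_1}^{a_1}\cdots H_{i_k}^{a_k}f=a_1H_{i_1}(a_1^{-1}a_2)H_{i_2}\cdots(a_{k-1}^{-1}a_k)H_{i_k}(a_k^{-1}f)\in\mathbb A_k .
\]
So $B^{\ur}_c$ is a finite union of members of $\mathbb A_k$. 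Hence $1_{B^{\ur}_c}\in J_k\subseteq J_\infty$, that is, $B^{\ur}_c$ is small relative to $J_\infty$. This is exactly relative properness \eqref{relprop} for $I=J_\infty$. Note that the integer $k$ here is fixed (the number of peripheral subgroups), so we land in $J_k$ itself; we only need $J_\infty$ to make the statement uniform. Bounded equivariance and weak-$\ell^2$-ness come directly from the cited corollary.

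\emph{Exactness.} This is the main obstacle, since the statement makes no exactness assumption on $\mathcal P$. We would argue as follows. Relative properness is with respect to $J_\infty$, which is typically all of $\ell^\infty(G)$-small-type sets in a coarse sense. The real point is that the criterion in \cite{CSU13,PV12} proceeds through a construction of approximately invariant maps $\partial\to\operatorname{Prob}(G)$ on the spectrum of $\ell^\infty(G)/J_\infty$. That construction uses only the array and never exactness of $G$ on the $J_\infty$ part. Concretely, for a weakly-$\ell^2$ representation $\sigma\prec\lambda$ one sets $\xi_g=\exp(-t\|\ur(g)\|^2)$-type normalized vectors via the Schoenberg/Gaussian construction. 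Bounded equivariance then gives $\|\xi_{hgk}-\sigma_h\xi_g\|\to 0$ as $g\to\infty$ relative to $J_\infty$. Because $\sigma$ is weakly contained in $\lambda$, these vectors transfer to the required amenability of $G\times G\curvearrowright \operatorname{spec}(\ell^\infty(G)/J_\infty)$.

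If the cited equivalence genuinely requires exactness of $G$, we would instead note the following. In the intended application $\mathcal P$ consists of exact groups, and a group hyperbolic relative to exact subgroups is exact: it acts amenably on its Bowditch boundary relative to the peripheral stabilizers, and exactness of the stabilizers upgrades this. We would then invoke Theorem \ref{thm: biexact=properarrays} directly. Since the statement as written omits this hypothesis, our first attempt is the direct construction of the approximately invariant maps above, checking that the proof in \cite{PV12} for the direction ``proper array $\Rightarrow$ amenable boundary action'' goes through verbatim with $c_0(G)$ replaced by $J_\infty$.
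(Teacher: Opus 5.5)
\textbf{Array step.} This is exactly the paper's argument. The paper's proof is the single line ``Corollary \ref{array-from-the-group} plus Theorem \ref{thm: biexact=properarrays}''. Rewriting a product of conjugates times $F$ as a finite union of members of $\mathbb A_n$ is the intended bookkeeping; strictly, $B^{\ur}_c$ is \emph{contained in} such a union.

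One misreading: the number $n$ of conjugate factors is not the number of peripheral subgroups; it is allowed to grow with $c$ (the letter $k$ in the displayed corollary is overloaded). If it were independent of $c$, the array would already be proper relative to a fixed $J_k$. For a single peripheral subgroup that would be $J_1=c_0(G,\mathcal P)$, and the upgrading argument of Section \ref{sec: relhyperbolicimplies biexact} would be pointless. This is why only $J_\infty$ comes out.

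\textbf{Exactness.} You are right that Theorem \ref{thm: biexact=properarrays} needs $G$ to be exact. The statement does not assume this, and the paper's one-line proof uses it silently. Your fallback is the correct repair and is the paper's proof with the hidden hypothesis made explicit. If the $H_\lambda$ are exact, then $G$ is exact by Ozawa's boundary amenability theorem for relatively hyperbolic groups, and the theorem applies. This covers the only place the corollary is used, Theorem \ref{thm: main}.

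\textbf{The gap.} Your preferred first attempt has a gap that cannot be closed. The Gaussian construction produces unit vectors in $\mathcal F(\mathcal H)\ominus\mathbb C\Omega$, where $\mathcal F(\mathcal H)$ is the symmetric Fock space with vacuum $\Omega$. Since Oyakawa's edge stabilizers are trivial, one even gets maps $\mu_t\colon G\to\operatorname{Prob}(G)$ with $\|\mu_t(hgk)-h\cdot\mu_t(g)\|_1$ small as $g\to\infty$ relative to $J_\infty$. This is only a Brown--Ozawa type condition. It is equivariant in the left variable and merely invariant in the right one, and it yields no continuous $\operatorname{Prob}(G\times G)$-valued maps on $\operatorname{Sp}(\ell^\infty(G)/J_\infty)$. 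Passing from it to topological amenability of the $G\times G$-action is exactly the step in the proof of Theorem \ref{thm: biexact=properarrays} that uses exactness of $G$.

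\textbf{The statement is false without exactness.} Take the following data:
\begin{itemize}
\item $H$ finitely generated and non-exact, and $G=H*\langle t\rangle$, which is hyperbolic relative to $\{H\}$;
\item $w_n=(th_0)^nt$ with $1\neq h_0\in H$, and $D=\bigsqcup_n Hw_nH$.
\end{itemize}
Each $Hw_nH\in\mathbb A_2$ is a free transitive orbit of $H\times H$ acting by $(a,b)\cdot g=agb^{-1}$. Each member of $\mathbb A_k$ meets only finitely many of these orbits, as one sees by comparing word lengths with respect to $\{t^{\pm1}\}\cup H$.

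Hence $Z\coloneqq\overline D\cap\operatorname{Sp}(\ell^\infty(G)/J_\infty)$ is the closed $H\times H$-invariant set of ultrafilters on $D\cong(H\times H)\times\mathbb N$ that escape in $n$. Fix a free ultrafilter $\mathcal U$ on $\mathbb N$. The map $f\mapsto\big((a,b)\mapsto\lim_{n\to\mathcal U}f(aw_nb^{-1})\big)$ is an $H\times H$-equivariant unital $*$-homomorphism $C(Z)\to\ell^\infty(H\times H)$. Equivalently, it is an equivariant continuous map $\beta(H\times H)\to Z$.

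Suppose $G\times G$ acted amenably on the spectrum. Restricting to $H\times H$ and then to $Z$, and pulling back along this map, makes $H\times H\curvearrowright\beta(H\times H)$ amenable. That forces $H$ to be exact, a contradiction. So an exactness hypothesis must be added to the statement, and your fallback is not optional but the only viable route.
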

\begin{proof}
    This follows directly from Corollary \ref{array-from-the-group} and Theorem \ref{thm: biexact=properarrays}.
\end{proof}


To this end we notice that biexactness of $G$ relative to $ \mathcal{P}$ is equivalent to the $G\times G$-action on $ \ell^\infty(G)/J_1 $ being amenable. On the other hand, Corollary \ref{cor: relative biexact to Jinf} already shows that the $G\times G$-action on $ \ell^\infty(G)/J_\infty $ is amenable. Therefore, since $ J_\infty$ is the inductive limit of $J_k$'s, to show that $G$ is biexact relative to $\mathcal{P}$, it suffices to show that the $ G\times G $ action on $ J_k/J_{k-1} $ is amenable for each $k\geq 2$. 

\subsection{Decomposition of $ \text{Sp}(J_k/J_{k-1}) $ for hyperbolically embedded subgroups}
Similar to the upgrading argument for an almost malnormal biexact subgroup in \cite{KEY26}, to show amenability of the $G\times G$ action on $ J_k/J_{k-1}$, we first decompose its spectrum as a disjoint union of subsets of the form $ \text{Sp}(\ell^\infty( T )/\ell^\infty(T)\cap J_{k-1}) $, where $ T= a_0H_{\lambda_1}a_1\cdots a_{k-1}H_{\lambda_k}a_k \in \mathbb A_k$.

First notice that the separation property of the elements of $\mathbb A_k$, established in Theorem \ref{thm: decomposition of Jk}, yields the following key property of $J_k$  for our later arguments.  

\begin{thm}\label{orthproj}
    Let $k\geq 1$, and $T=a_0H_{\lambda_1}a_1H_{\lambda_2}a_2\cdots a_{k-1}H_{\lambda_k}a_k, \quad S=b_0H_{\mu_1}b_1\cdots b_{k-1}H_{\mu_k}b_k\in \mathbb A_{k}$ with $a_i,b_i\in G$. If $ p_T,p_S\in J_k/J_{k-1} $ denote the characteristic function of $ T $ and $S$, then either $ p_T=p_S $ or $ p_Tp_S=0 $.
\end{thm}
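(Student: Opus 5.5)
The plan is to reduce Theorem \ref{orthproj} directly to the dichotomy of Theorem \ref{thm: decomposition of Jk}. In $\ell^\infty(G)$, characteristic functions satisfy $1_T 1_S = 1_{T\cap S}$. Since $J_{k-1}$ is an ideal in $J_k$ (a boundary piece is an ideal of $\ell^\infty(G)$: products of a function in $I_{\mathcal S}$ with a bounded function have their $\varepsilon$-level sets inside those of the first factor, hence still $\mathcal S$-small), the quotient map $J_k\to J_k/J_{k-1}$ is a $*$-homomorphism. First I would check that $1_T,1_S\in J_k$, which holds because $T,S\in\mathbb A_k$ and $J_k$ is the boundary piece generated by $\mathbb A_k$. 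Hence $p_Tp_S$ is the image of $1_{T\cap S}$.

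Next I would apply Theorem \ref{thm: decomposition of Jk} to $T$ and $S$. There are two cases.

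If $T=S$, then $p_T=p_S$ trivially.

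Otherwise $T\cap S$ is small relative to $\mathbb A_{k-1}$. By the characterization of $I_{\mathcal S}$ (a function lies in it iff each $\varepsilon$-level set is $\mathcal S$-small, and for an indicator every level set with $\varepsilon<1$ is $T\cap S$ itself), this gives $1_{T\cap S}\in J_{k-1}$, so $p_Tp_S=0$.

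One subtlety to record is that the alternative "$T=S$" in Theorem \ref{thm: decomposition of Jk} means equality of sets. Equality of the classes $p_T=p_S$ is weaker, so the dichotomy is not exclusive, but the statement only asks for "either/or", so this causes no difficulty.

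There is essentially no obstacle here, since the geometric content already sits in Theorem \ref{thm: decomposition of Jk}. The only points needing care are that $\mathbb A_{k-1}$ is invariant under left and right translations, so that smallness matches membership in $J_{k-1}$ as in the definition, and that the case $k=1$ is covered. For $k=1$, $J_0=c_0(G)$ is generated by singletons, $\mathbb A_0$-small means finite, and the statement reduces to almost malnormality (Proposition \ref{Prop:maln}). This case is also included in Theorem \ref{thm: decomposition of Jk} as stated.
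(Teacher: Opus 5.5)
Your proposal is correct and follows the same route as the paper, which simply says that the result follows directly from Theorem \ref{thm: decomposition of Jk}. You have made explicit the routine points that $p_Tp_S$ is the image of $1_{T\cap S}$ under the quotient $*$-homomorphism, and that $\mathbb A_{k-1}$-smallness of $T\cap S$ is exactly the condition $1_{T\cap S}\in J_{k-1}$.
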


\begin{proof}
    This follows directly from Theorem \ref{thm: decomposition of Jk}.
\end{proof}

From the previous theorem, we immediately obtain two corollaries.

\begin{cor}
    \[\text{Sp}(J_k/J_{k-1}) = \bigsqcup_{ T\in \mathbb A_{k}\backslash \mathbb A_{k-1} } \text{Sp}(\ell^\infty(T)/\ell^\infty(T)\cap J_{k-1}). \] 
\end{cor}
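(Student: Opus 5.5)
The plan is to read the statement as a decomposition of the spectrum of the commutative C$^*$-algebra $J_k/J_{k-1}$ into clopen pieces indexed by the distinct projections $p_T$. Here the index set $\mathbb A_k\backslash\mathbb A_{k-1}$ means classes of $T\in\mathbb A_k$ modulo the identification $p_T=p_S$; the sets with $T\in\mathbb A_{k-1}$ are excluded because they give $p_T=0$. The first step is to identify, for each $T\in\mathbb A_k$, the ideal $p_T(J_k/J_{k-1})$ with $\ell^\infty(T)/(\ell^\infty(T)\cap J_{k-1})$. The natural map $\ell^\infty(T)\to J_k/J_{k-1}$, $f\mapsto f+J_{k-1}$, is well defined because $T\in\mathbb A_k$, so $\ell^\infty(T)\subset J_k$. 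Its kernel is $\ell^\infty(T)\cap J_{k-1}$, and its image is exactly $p_T(J_k/J_{k-1})$, since $p_T(f+J_{k-1})=1_Tf+J_{k-1}$ with $1_Tf\in\ell^\infty(T)$. Since $p_T$ is a projection in a commutative C$^*$-algebra, $\mathrm{Sp}(p_T(J_k/J_{k-1}))$ is a clopen subset $U_T$ of $\mathrm{Sp}(J_k/J_{k-1})$, namely the set of characters $\chi$ with $\chi(p_T)=1$. This gives $U_T\cong \mathrm{Sp}(\ell^\infty(T)/\ell^\infty(T)\cap J_{k-1})$, and $U_T$ is compact since $p_T$ is a unit for that ideal.

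The second step is disjointness. By Theorem~\ref{orthproj}, for any $T,S\in\mathbb A_k$ either $p_T=p_S$ (the same index) or $p_Tp_S=0$. In the latter case a character $\chi$ cannot satisfy $\chi(p_T)=\chi(p_S)=1$, because $\chi(p_Tp_S)=0$. Hence the $U_T$ for distinct classes are pairwise disjoint.

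The third step, which I expect to be the only real point, is covering: every character $\chi$ of $J_k/J_{k-1}$ satisfies $\chi(p_T)=1$ for some $T$. I would use the description of $J_k$ as the closed ideal (of $\ell^\infty(G)$) generated by the $1_T$, $T\in\mathbb A_k$. Every $f\in J_k$ is a norm limit of functions supported on finite unions $T_1\cup\dots\cup T_n$, since the sets $\{|f|>\varepsilon\}$ are $\mathbb A_k$-small. Therefore the span of elements $p_{T_1\cup\dots\cup T_n}(J_k/J_{k-1})$ is dense. Moreover, $1_{T_1\cup\cdots\cup T_n}$ lies in the algebra generated by the $1_{T_i}$: by inclusion--exclusion, modulo $J_{k-1}$ the cross terms $1_{T_i}1_{T_j}$ are either $0$ or equal to $p_{T_i}$ by Theorem~\ref{orthproj}. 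So after merging equal classes, the quotient image $p_{T_1\cup\cdots\cup T_n}$ equals $\sum p_{T_i}$ with orthogonal summands. If $\chi$ is nonzero, choose $f$ with $\chi(f)\neq0$ and approximate $f$ by $\sum_i p_{T_i}f'$. Then $\chi(p_{T_i})\neq0$ for some $i$, and since $\chi(p_{T_i})\in\{0,1\}$, this gives $\chi(p_{T_i})=1$, so $\chi\in U_{T_i}$.

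Finally, I would record that the union is topologically a disjoint union, since each $U_T$ is clopen. This is the form needed later: the $G\times G$ action permutes the pieces, because $(g,h)\cdot T=gTh^{-1}\in\mathbb A_k$. Amenability can then be checked on stabilizers.
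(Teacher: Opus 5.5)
Your proposal is correct and follows the same route as the paper. You identify $p_T(J_k/J_{k-1})$ with $\ell^\infty(T)/\ell^\infty(T)\cap J_{k-1}$, use Theorem~\ref{orthproj} to see that these ideals are mutually orthogonal and together exhaust $J_k/J_{k-1}$, and then pass to spectra by Gelfand duality. Your covering argument makes explicit the density step that the paper's ``one can see'' leaves to the reader. Your reinterpretation of the index set as equivalence classes is harmless but unnecessary: by Theorem~\ref{thm: decomposition of Jk}, $p_T=p_S\neq 0$ already forces $T=S$.
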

\begin{proof}
 Using Theorem \ref{orthproj} one can see that \[J_k/J_{k+1}=\bigoplus_{T\in \mathbb A_k\setminus \mathbb A_{k-1}} p_T (J_k/J_{k-1})= \bigoplus_{T \in \mathbb A_k\setminus \mathbb A_{k-1} } \left (\ell^\infty (T)/\ell^\infty(T)\cap J_{k-1}\right).\]
 Then the conclusion follows from Gelfand duality.
\end{proof}

\begin{cor}\label{cor: stabilizer} The natural action of $G\times G$ on $Sp(J_k/J_{k-1})$ permutes the components $\{\text{Sp}(\ell^\infty(T)/\ell^\infty(T)\cap J_{k-1})\}_{T \in \mathbb A_k \setminus \mathbb A_{k-1}}$.
Moreover, if  $ T= a_0H_{\lambda_1}a_1\cdots a_{k-1}H_{\lambda_k}a_k\in \mathbb A_k\backslash \mathbb A_{k-1} $, the stabilizer of $ \text{Sp}(\ell^\infty(T)/\ell^\infty(T)\cap J_{k-1}) \subset \text{Sp}(J_k/J_{k-1}) $ under this action is
    \[ (a_0H_{\lambda_1}a_0^{-1})\times (a_k^{-1}H_{\lambda_k}a_k). \]
    
\end{cor}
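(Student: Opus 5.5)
The first assertion follows from equivariance. The $G\times G$ action on $\ell^\infty(G)$ is $((s,t)\cdot f)(g)=f(s^{-1}gt)$. Both $J_k$ and $J_{k-1}$ are boundary pieces, so the action descends to $J_k/J_{k-1}$, and $(s,t)\cdot p_T=p_{sTt^{-1}}$. Since $sTt^{-1}=(sa_0)H_{\lambda_1}a_1\cdots H_{\lambda_k}(a_kt^{-1})$ lies again in $\mathbb A_k$, and is not $\mathbb A_{k-1}$-small when $T$ is not (translation preserves smallness), the action permutes the central projections $p_T$. By Gelfand duality it therefore permutes the clopen components $\text{Sp}(\ell^\infty(T)/\ell^\infty(T)\cap J_{k-1})=\text{Sp}(p_T(J_k/J_{k-1}))$. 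By Theorem \ref{orthproj}, two such components are either equal or disjoint. So the stabilizer of the component attached to $T$ is exactly $\{(s,t): p_{sTt^{-1}}=p_T\}$, which by Theorem \ref{orthproj} is $\{(s,t): sTt^{-1}\cap T \text{ is not } \mathbb A_{k-1}\text{-small}\}$. By Theorem \ref{thm: decomposition of Jk}, this is the same as $\{(s,t): sTt^{-1}=T\}$.

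The inclusion $\supseteq$ is direct. If $s=a_0ha_0^{-1}$ with $h\in H_{\lambda_1}$, then $sT=a_0hH_{\lambda_1}\cdots=T$. Similarly, if $t=a_k^{-1}h'a_k$ with $h'\in H_{\lambda_k}$, then $Tt^{-1}=\cdots H_{\lambda_k}h'^{-1}a_k=T$.

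For $\subseteq$, suppose $sTt^{-1}=T$, i.e.\ $(sa_0)H_{\lambda_1}a_1\cdots H_{\lambda_k}(a_kt^{-1})=a_0H_{\lambda_1}a_1\cdots H_{\lambda_k}a_k$. We need $a_0^{-1}sa_0\in H_{\lambda_1}$ and $a_kt^{-1}a_k^{-1}\in H_{\lambda_k}$. Rather than extract this from the equality of sets, I would rerun the geometric argument in the proof of Theorem \ref{thm: decomposition of Jk}. That proof, applied to the two words for $sTt^{-1}$ and $T$ (after normalizing $b_0=b_k=1$ by an outer translation), concludes that the outer coefficients differ by elements of the adjacent peripheral subgroups. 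Concretely, choose $g$ in the intersection outside the small set $B$ and with all peripheral letters of large $\widehat d$-length. The $n$-gon argument then forces $P_1H_{\lambda_1}=Q_1H_{\mu_1}$ with $\lambda_1=\mu_1$, which gives $a_0^{-1}sa_0\in H_{\lambda_1}$. Symmetrically, the final step $Q_k=P_k\eta_k$ gives $a_kt^{-1}a_k^{-1}\in H_{\lambda_k}$. This is exactly the output of that proof: ``$a_0=\eta_1^{-1}\in H_{\lambda_1}$'' and ``$a_k\in H_{\lambda_k}$'' after normalization.

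The main obstacle is this last step. The conclusion $T=S$ of Theorem \ref{thm: decomposition of Jk} as stated does not directly pin down the outer coefficients, so I must use the stronger information in its proof. The alternative would be a direct argument that $xH_{\lambda_1}\cdots=H_{\lambda_1}\cdots$ forces $x\in H_{\lambda_1}$, but that can fail in degenerate cases, for example when $T$ is $\mathbb A_{k-1}$-small. This is precisely why the hypothesis $T\notin\mathbb A_{k-1}$ is needed: it guarantees the existence of the generic element $g$ required by the $n$-gon argument.
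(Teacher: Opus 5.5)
Your proposal is correct and follows essentially the paper's route: you use Theorem~\ref{orthproj} to reduce stabilizing the component to $sTt^{-1}\cap T$ being non-$\mathbb A_{k-1}$-small, and then extract $a_0^{-1}sa_0\in H_{\lambda_1}$ and $a_kt^{-1}a_k^{-1}\in H_{\lambda_k}$ from the \emph{proof} (not the statement) of Theorem~\ref{thm: decomposition of Jk}, exactly as the paper does; you also check the easy reverse inclusion, which the paper leaves implicit. One small caveat: what actually supplies the generic element $g$ is that $T$ is not $\mathbb A_{k-1}$-small, not merely that $T\notin\mathbb A_{k-1}$, and the hypothesis $T\in\mathbb A_k\setminus\mathbb A_{k-1}$ has to be read this way here (as the paper implicitly does), since otherwise the component could be empty.
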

\begin{proof} Since $G\times G$ permutes the sets $\mathbb A_k$, the natural   $G\times G$-action also permutes the components $ \{\text{Sp}(\ell^\infty(T)/\ell^\infty(T)\cap J_{k-1})\}_{T\in \mathbb A_k\setminus \mathbb A_{k-1}}   $. 

    Suppose $ (g_1,g_2) $ is inside the stabilizer, then it should preserve the identity of $ \ell^\infty(T)/\ell^\infty(T)\cap J_{k-1} $. In particular, $g_1Tg_2^{-1}= g_1a_0H_{\lambda_1}\cdots H_{\lambda_k}a_kg_2^{-1}$ should have non $ \mathbb A_{k-1}$-small intersection with $T$. Using the proof of Theorem \ref{thm: decomposition of Jk}, we must have $ g_1a_0H_{\lambda_1}=a_0H_{\lambda_1} $ and $ H_{\lambda_k}a_kg_2^{-1}=Ha_k $, implying that $ g_1\in a_0H_{\lambda_1}a_0^{-1} $ and $ g_2\in a_k^{-1}H_{\lambda_k}a_k $. 
\end{proof}

\subsection{Gluing and exactness of $H_\lambda$'s.}

We can use the following standard lemma to show the amenability of the $G\times G$ action on $\text{Sp}(J_k/J_{k-1})$.
\begin{lemma}\label{lem: gluing}
    Let $\Lambda$ be a discrete group acting on a locally compact Hausdorff space $\Pi$ such that $ \Pi=\bigsqcup_{i\in I} \Pi_i $ with each $\Pi_i$ an open compact subspace. Suppose $\Lambda$ permutes the $\Pi_i$'s, and for each $i$, the stabilizer $\Lambda_i \coloneqq \{\lambda\in \Lambda: \lambda(\Pi_i) =\Pi_i\}$ acts amenably on $\Pi_i$, then the action of $\Lambda$ on $ \Pi $ is also amenable.
\end{lemma}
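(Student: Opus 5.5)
The plan is to build the required approximate equivariant maps directly from the maps supplied by the stabilizers, using an orbit decomposition of the index set and a choice of coset representatives. Since $\Lambda$ permutes the pieces $\Pi_i$, the index set splits into $\Lambda$-orbits. In each orbit I fix a base index $i_0$ and, for every $j$ in that orbit, an element $s_j\in\Lambda$ with $s_j\Pi_{i_0}=\Pi_j$, taking $s_{i_0}=e$. By hypothesis $\Lambda_{i_0}$ acts amenably on the compact space $\Pi_{i_0}$. Because $\Pi_{i_0}$ is compact, I fix a single directed set: for a finite $F\subset\Lambda$ and $\varepsilon>0$, amenability gives continuous maps $m^{(i_0)}\colon\Pi_{i_0}\to\operatorname{Prob}(\Lambda_{i_0})$ that are $\varepsilon$-equivariant, uniformly on $\Pi_{i_0}$, for the finitely many elements of $\Lambda_{i_0}$ that arise from $F$ in the way described below.

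For $x\in\Pi_j$ I define
\[
m(x)\coloneqq s_j\cdot m^{(i_0)}(s_j^{-1}x)\in\operatorname{Prob}(\Lambda),
\]
regarding $\operatorname{Prob}(\Lambda_{i_0})\subset\operatorname{Prob}(\Lambda)$. This map is continuous because each $\Pi_j$ is open and closed, so $\Pi$ is the topological disjoint union of the pieces.

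To check equivariance, take $g\in\Lambda$ and $x\in\Pi_j$, so that $gx\in\Pi_{j'}$ with $j'=gj$. Then $t\coloneqq s_{j'}^{-1}gs_j\in\Lambda_{i_0}$, and
\[
\|m(gx)-g\,m(x)\|_1=\|m^{(i_0)}(t\,s_j^{-1}x)-t\,m^{(i_0)}(s_j^{-1}x)\|_1 ,
\]
using $\ell^1$-isometry of translation.

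Uniformity is only needed on compact subsets $K\subset\Pi$. Such a $K$ meets only finitely many $\Pi_j$, since these form an open cover by disjoint sets. So for a fixed $g$, only finitely many elements $t=s_{gj}^{-1}gs_j$ arise, and they can involve only finitely many base indices $i_0$. I therefore index the net by triples $(F,K,\varepsilon)$. For each relevant orbit I pick $m^{(i_0)}$ that is $\varepsilon$-equivariant on $\Pi_{i_0}$ for these finitely many $t$. On orbits that $K$ never meets, I choose the maps arbitrarily, for instance by the same construction with any fixed maps. Since $\Pi_{i_0}$ is compact and contains $s_j^{-1}(K\cap\Pi_j)$, this gives $\|m(gx)-g\,m(x)\|_1<\varepsilon$ for all $g\in F$ and $x\in K$.

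The main point of care is this bookkeeping. The definition only requires uniform convergence on compact sets, and since a compact set meets only finitely many pieces, this reduces to finitely many stabilizer elements. Without that reduction, non-uniformity across infinitely many pieces would break the argument. Continuity relies on the $\Pi_i$ being clopen. If needed, I will also note that translating a probability measure on $\Lambda_{i_0}$ by $s_j$ yields a probability measure on $\Lambda$, supported on the coset $s_j\Lambda_{i_0}$, which is all that is used.
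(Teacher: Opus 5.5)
Your proposal is correct and follows the paper's proof essentially verbatim: it uses the same orbit representatives and transporting elements $s_j$, the same definition $m(x)=s_j\cdot m^{(i_0)}(s_j^{-1}x)$, and the same reduction to the finitely many elements $t=s_{gj}^{-1}gs_j\in\Lambda_{i_0}$ that arise because a compact set meets only finitely many pieces. The only difference is cosmetic bookkeeping: you index the net by triples $(F,K,\varepsilon)$, whereas the paper takes the product over all orbits $O$ of the directed sets $D_O$.
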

\begin{proof}
    Let $\Lambda\curvearrowright I$ denote the induced action on the index set corresponding to the permutation action of $\Lambda$ on $\{\Pi_i\}_{i\in I}$, and let $I/\Lambda$ denote the set of $\Lambda$-orbits in the index set $I$. For each orbit $O\in I/\Lambda$, choose a representative $i_O\in O$. For every $j\in O$, choose an element $s(j) \in \Lambda$ such that $s(j) \Pi_{i_O} = \Pi_j$, and we pick $s(i_O) = e$.

    Since $\Lambda_{i_O}\curvearrowright \Pi_{i_O}$ is amenable, there exist a directed set $D_O$ and a net of continuous maps $m_{\alpha}^O \colon \Pi_{i_O}\to \text{Prob}(\Lambda_{i_O})$, $\alpha\in D_O$ such that for every $\lambda \in \Lambda_{i_O}$,
    \begin{equation} \label{eqn: amen action}
        \lim_{\alpha\to\infty} \sup_{x\in \Pi_{i_O}} \lVert \lambda\cdot m_{\alpha}^O(x) - m_{\alpha}^O(\lambda\cdot x)\rVert_1 =0.
    \end{equation}

    Let $A\coloneqq \prod_{O\in I/\Lambda}\alpha_O$ be the product directed set ordered coordinate-wise. For each $\alpha = (\alpha_O)_O \in A$, we define a continuous map $M_\alpha\colon \Pi\to \text{Prob}(\Lambda)$ piecewise on each $\Pi_j$ as follows: Let $O$ denote the orbit of $j\in I$. For each $x\in \Pi_j$, since $s(j)^{-1}x\in \Pi_{i_O}$, we define
    \[M_\alpha(x)\coloneqq s(j)\cdot m_{\alpha_O}^O (s(j)^{-1}\cdot x),\]
    where regard $\text{Prob}(\Lambda_{i_O})$ as a subset of $\text{Prob}(\Lambda)$ by extension by zero. Since each $\Pi_j$ is open in $\Pi$ and the definition on each $\Pi_j$ is continuous, $M_\alpha$ is continuous on $\Pi$.

    We now verify asymptotic equivariance. Fix $\lambda\in \Lambda$ and $x\in \Pi_j$. Let $O\subset I$ be the orbit of $j$, so then $\lambda\cdot j\in O$ as well. Note that $s(\lambda\cdot j)^{-1}\lambda s(j)\in \Lambda_{i_O}$ since both $s(\lambda\cdot j)$ and $\lambda s(j)$ send $\Pi_{i_O}$ onto $\Pi_{\lambda\cdot j}$. We compute
    \[\lambda\cdot M_\alpha(x) 
    = \lambda\cdot s(j)\cdot m_{\alpha_O}^O(s(j)^{-1}\cdot x)
    =  s(\lambda\cdot j)\cdot \big((s(\lambda\cdot j)^{-1}\lambda s(j)\cdot m_{\alpha_O}^O(s(j)^{-1}\cdot x)\big),\]
    while
    \[M_\alpha(\lambda\cdot x) 
    = s(\lambda\cdot j)\cdot m_{\alpha_O}^O(s(\lambda\cdot j)^{-1}\lambda \cdot x) 
    = s(\lambda\cdot j)\cdot m_{\alpha_O}^O\big(s(\lambda\cdot j)^{-1}\lambda s(j)s(j)^{-1}\cdot x\big),\]
    so for $x\in \Pi_j$,
    \[\lVert \lambda\cdot M_\alpha(x) - M_\alpha(\lambda\cdot x)\rVert_1 = \left\Vert \big(s(\lambda\cdot j)^{-1}\lambda s(j)\big)\cdot m_{\alpha_O}^O(s(j)^{-1}\cdot x) - m_{\alpha_O}^O\big(\big(s(\lambda\cdot j)^{-1}\lambda s(j)\big) s(j)^{-1}\cdot x\big)\right\Vert_1.\]
    
    Let $K\subset \Pi$ be compact. Since $\Pi = \sqcup_i \Pi_i$ is a disjoint union with each $\Pi_i$ open, $K$ only intersect with finitely many $\Pi_j$'s, so the index set $F_K\coloneqq \{j\in I: K\cap \Pi_j\neq\varnothing\}$ is finite. For each $j \in F_K$, the set $K_j\coloneqq s(j)^{-1}(K\cap \Pi_j) \subset \Pi_{i_O}$ is compact. Hence by (\ref{eqn: amen action}),
    \begin{align*}
        &\lim_\alpha \sup_{x\in K}\lVert \lambda\cdot M_\alpha(x) - M_\alpha(\lambda\cdot x)\rVert_1 \\=& \lim_\alpha\sup_{j\in F_K} \sup_{x\in K\cap \Pi_j} \left\Vert \big(s(\lambda\cdot j)^{-1}\lambda s(j)\big)\cdot m_{\alpha_O}^O(s(j)^{-1}\cdot x) - m_{\alpha_O}^O\big(\big(s(\lambda\cdot j)^{-1}\lambda s(j)\big) s(j)^{-1}\cdot x\big)\right\Vert_1 = 0,
    \end{align*}
    since $F_k$ is a finite set and there could be only finitely many orbits $O$ and group elements of the form $s(\lambda\cdot j)^{-1}\lambda s(j)\big) s(j)^{-1}$ appearing for $j\in F_k$. This proves $\Lambda\curvearrowright \Pi$ is amenable as desired.
\end{proof}

\begin{lemma}\label{subgroup amen}
    Suppose that each $H_{\lambda}$ is exact. For any $T= H_{\lambda_1}a_1H_{\lambda_2}\cdots H_{\lambda_{k-1}}a_{k-1}H_{\lambda_k}\in \mathbb A_k/\mathbb A_{k-1}$, the $H_{\lambda_1}\times H_{\lambda_{k}}$ action on $ \text{Sp}(\ell^\infty(T)/\ell^\infty(T)\cap J_{k-1}) $ is amenable.
\end{lemma}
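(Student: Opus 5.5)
The plan is to realize the $H_{\lambda_1}\times H_{\lambda_k}$-space $\text{Sp}(\ell^\infty(T)/\ell^\infty(T)\cap J_{k-1})$ as a closed invariant subset of a space on which amenability is known, namely one built from the Stone–Čech boundaries of the peripheral subgroups. Exactness of $H_{\lambda_1}$ and $H_{\lambda_k}$ gives amenability of their left translation actions on $\beta H_{\lambda_1}$ and $\beta H_{\lambda_k}$. We then pass to a product and restrict, using that amenability is preserved under products of actions of product groups and under passing to closed invariant subsets and equivariant images. The main tool will be an $H_{\lambda_1}\times H_{\lambda_k}$-equivariant map from $T$ to $H_{\lambda_1}\times H_{\lambda_k}$ (or to a space fibred over it) with "finite-to-one modulo $J_{k-1}$'' behaviour. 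This lets us pull back continuous maps $m_i\colon \beta H_{\lambda_1}\times\beta H_{\lambda_k}\to\text{Prob}(H_{\lambda_1}\times H_{\lambda_k})$.

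\textbf{Step 1: the coordinate maps.} Each $g\in T$ has a factorization $g=h_1a_1h_2\cdots a_{k-1}h_k$. By the separation argument in the proof of Theorem \ref{thm: decomposition of Jk}, for $g$ outside an $\mathbb A_{k-1}$-small set (long components, no extra connections), the cosets $h_1H$-type data are determined up to bounded ambiguity. In particular, the first and last letters $h_1,h_k$ are determined up to right/left multiplication by a finite set, and this finite set can be chosen uniformly. I would choose a section $\phi\colon T\to H_{\lambda_1}\times H_{\lambda_k}$, $g\mapsto (h_1,h_k^{-1})$ or similar. Left multiplication by $x\in H_{\lambda_1}$ and right multiplication by $y\in H_{\lambda_k}$ change $\phi$ by translation, up to bounded error off a $J_{k-1}$-small set. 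Because the relevant quantities are defined only modulo $J_{k-1}$, this is exactly the freedom we need.

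\textbf{Step 2: pulling back.} Exactness of $H_{\lambda_1}$ and $H_{\lambda_k}$ gives nets $\mu_i\colon H_{\lambda_1}\to\text{Prob}(H_{\lambda_1})$ and $\nu_i\colon H_{\lambda_k}\to\text{Prob}(H_{\lambda_k})$ that are asymptotically equivariant uniformly on the group (the Ozawa characterization of exactness via $\beta H$). Set $m_i(g)=\mu_i(h_1)\otimes\nu_i(h_k^{-1})$ for $g\in T$. The coordinate functions $g\mapsto m_i(g)(s,t)$ lie in $\ell^\infty(T)$ and so descend to continuous functions on the spectrum of the quotient. Asymptotic equivariance of $m_i$ holds up to errors supported on $J_{k-1}$-small sets, together with bounded perturbations of $h_1,h_k$. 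We absorb the perturbations by averaging $\mu_i$ over the finite ambiguity set, or by noting that uniform asymptotic invariance under a finite set of translations is built in. Since these errors vanish in $\ell^\infty(T)/\ell^\infty(T)\cap J_{k-1}$, we obtain the amenability estimate on the whole compact spectrum.

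\textbf{Main obstacle.} The hard step is Step 1: making the first and last letters well defined modulo $J_{k-1}$ with uniformly bounded ambiguity. The equivariance under $H_{\lambda_1}\times H_{\lambda_k}$ is clear on formal words, but the factorization need not be unique. My first approach is to rerun the $N$-gon argument of Theorem \ref{thm: decomposition of Jk} with $S=T$ and two factorizations of the same $g$. Off the small set $B$ from the Claim, the components must be pairwise connected, which gives $h_1'=h_1\eta_1$ with $\eta_1$ lying in a finite set (bounded by the isolated-component constant $C$ applied to the short edges $a_i$). If this finiteness fails, I would fall back on defining $\phi$ coset-wise and using that the map to $H_{\lambda_1}\times H_{\lambda_k}$ need only be proper-ish modulo $J_{k-1}$.
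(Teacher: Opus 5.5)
Your argument is correct, but it builds the equivariant map by a different and heavier route than the paper.

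\textbf{The paper's route.} The paper regards $\text{Sp}(\ell^\infty(T)/\ell^\infty(T)\cap J_{k-1})$ as a closed invariant subset of $\beta T$. It then notes that $H_{\lambda_1}\times H_{\lambda_k}$ acts on the discrete set $T$ by $(x,y)\cdot g=xgy^{-1}$, with stabilizers $\{(x,g^{-1}xg): x\in H_{\lambda_1}\cap gH_{\lambda_k}g^{-1}\}$. These are finite by almost malnormality (Proposition \ref{Prop:maln}). So every orbit is a quotient of $H_{\lambda_1}\times H_{\lambda_k}$ by a finite subgroup. Averaging over stabilizers, with an \emph{arbitrary} choice of orbit representatives, gives an exactly equivariant u.c.p.\ map $\ell^\infty(H_{\lambda_1}\times H_{\lambda_k})\to\ell^\infty(T)$. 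Exactness then transfers amenability to all of $\beta T$.

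\textbf{Comparison.} You instead use a canonical global coordinate, the first and last letters, and you must show it is well defined. That is why you need the $N$-gon separation argument of Theorem \ref{thm: decomposition of Jk}, and why your map exists only off $\mathbb A_{k-1}$-small sets. The paper's route is much shorter and uses nothing beyond almost malnormality. Strictly speaking, stabilizers are infinite at points of $T\cap H_{\lambda_1}$ when $\lambda_1=\lambda_k$, but that set is $\mathbb A_1$-small and vanishes in the quotient. Your route gives extra structure: an equivariant map from the quotient spectrum to $\beta(H_{\lambda_1}/F_1\times H_{\lambda_k}/F_k)$, with $F_1,F_k$ as below.

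\textbf{Tightening Steps 1 and 2.}
\begin{enumerate}
\item Let $B'$ be the set of $g\in T$ that admit some factorization with a letter in some $B_{\lambda_i,R}$. This set is $\mathbb A_{k-1}$-small. For $g\notin B\cup B'$, any two factorizations are long, and the Claim gives $P_2H_{\lambda_2}=Q_2H_{\lambda_2}$. Hence $h_1^{-1}h_1'\in F_1:=H_{\lambda_1}\cap a_1H_{\lambda_2}a_1^{-1}$.
\item $F_1$ is a finite \emph{subgroup} by Proposition \ref{Prop:maln}, since $T\notin\mathbb A_{k-1}$ forces $a_1\notin H_{\lambda_1}$ when $\lambda_1=\lambda_2$. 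So $g\mapsto h_1F_1$ is well defined. Symmetrically $g\mapsto h_k^{-1}F_k$ is well defined, with $F_k:=H_{\lambda_k}\cap a_{k-1}^{-1}H_{\lambda_{k-1}}a_{k-1}$.
\item You must average $\mu_i$ over right translates by $F_1$, and $\nu_i$ over right translates by $F_k$. Your alternative, that invariance under finitely many translations is ``built in'', does not work. The exactness maps are asymptotically equivariant under left translations, while the ambiguity here is on the right.
\item Take $\mu_i,\nu_i$ with uniformly finite supports. Then their extensions to the spectrum stay probability-valued. The $\ell^1$-estimates hold off $(B\cup B')\cup(x,y)^{-1}(B\cup B')$, so they pass to the closure.
\end{enumerate}
No finite-to-one property of the coordinate map is needed.
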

\begin{proof}
    As $\ell^\infty(T)/\ell^\infty(T)\cap J_{k-1}$ is a quotient of $\ell^\infty (T)$, then $\text{Sp}(\ell^\infty(T)/\ell^\infty(T)\cap J_{k-1})$ is a closed invariant subset of $ \text{Sp}(\ell^\infty(T)) $. Since amenability passes to restrictions to closed invariant subsets it suffices to show that the $H_{\lambda_1}\times H_{\lambda_{k}}$ action on $\beta T\coloneqq \text{Sp}(\ell^\infty(T)) $ is amenable. Now, by Theorem \ref{Prop:maln}, for every $g\in T\subset G\backslash \cup_\lambda H_\lambda$ its stabilizer  ${\rm Stab} (g)= H_{\lambda_1}\cap gH_{\lambda_k}g^{-1}$ is finite. Since $H_{\lambda_1}\times H_{\lambda_{k}}$ is exact and acting on the discrete set $T$ with finite stabilizer, this implies that the $H_{\lambda_1}\times H_{\lambda_{k}}$ action on $\beta T$ is amenable. (Indeed, the finite stabilizer condition gives us a $G\times G$-equivariant u.c.p. map $ \phi:\ell^\infty(G\times G) \to \ell^\infty(T)$, $ \phi(f)((g_1,g_2)x)\coloneqq \tfrac{1}{|\text{Stab}(x)|}\sum_{(h_1,h_2)\in \text{Stab}(x)}f( g_1h_1,g_2h_2 ) $ where $x\in T$ is a chosen orbit representative.)
\end{proof}

\begin{thm}\label{thm: hyperbolic embedding amenable action}
    If $\{H_\lambda\}_{\lambda \in \Lambda} \hookrightarrow_{h} (G,X) $, and $ H_\lambda $ is exact for all $\lambda \in \Lambda$, then the $G\times G$ action on $ J_\infty/J_1 $ is amenable.
\end{thm}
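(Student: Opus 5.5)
The plan is to reduce amenability on $J_\infty/J_1$ to amenability on each successive quotient $J_k/J_{k-1}$, $k\ge 2$, and then to treat each quotient with the decomposition and gluing results above.

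\textbf{Step 1 (filtration reduction).} I will use two standard permanence properties of amenable actions on commutative $C^*$-algebras. First, if $0\to I\to A\to A/I\to 0$ is a $G\times G$-equivariant extension of commutative $C^*$-algebras, then $\text{Sp}(A)=\text{Sp}(I)\sqcup \text{Sp}(A/I)$, with $\text{Sp}(I)$ open and $\text{Sp}(A/I)$ closed. In this situation, the action on $A$ is amenable if and only if the actions on $I$ and on $A/I$ are both amenable. Applying this inductively to
\[0\to J_{k-1}/J_1\to J_k/J_1\to J_k/J_{k-1}\to 0\]
reduces amenability of $J_k/J_1$, for every $k$, to amenability of each $J_k/J_{k-1}$. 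Second, $J_\infty/J_1$ is the closure of the increasing union of the ideals $J_k/J_1$. Hence its spectrum is the increasing union of the open invariant subsets $\text{Sp}(J_k/J_1)$. Every compact subset of $\text{Sp}(J_\infty/J_1)$ lies in one of these open sets. I will then glue the nets on the pieces using an invariant partition of unity subordinate to this open exhaustion; equivalently, I cite that amenability is preserved under inductive limits of ideals. This gives amenability on $J_\infty/J_1$.

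\textbf{Step 2 (each quotient).} Fix $k\ge 2$. By the corollary following Theorem \ref{orthproj},
\[\text{Sp}(J_k/J_{k-1})=\bigsqcup_{T}\Pi_T,\qquad \Pi_T:=\text{Sp}(\ell^\infty(T)/\ell^\infty(T)\cap J_{k-1}),\]
where $T$ runs over the distinct classes in $\mathbb A_k\setminus\mathbb A_{k-1}$. Each $\Pi_T$ is compact and open, because it is the support of the projection $p_T$, which is unital in the corner $p_T(J_k/J_{k-1})$. By Corollary \ref{cor: stabilizer}, $G\times G$ permutes the pieces $\Pi_T$, and the stabilizer of $\Pi_T$ is $a_0H_{\lambda_1}a_0^{-1}\times a_k^{-1}H_{\lambda_k}a_k$.

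To apply Lemma \ref{subgroup amen}, I will translate $T$ by $(a_0^{-1},a_k)$. This brings $T$ to the normalized form $H_{\lambda_1}a_1\cdots a_{k-1}H_{\lambda_k}$ and conjugates the stabilizer to $H_{\lambda_1}\times H_{\lambda_k}$. Conjugating the amenable action back along this translation shows that the stabilizer acts amenably on $\Pi_T$. Lemma \ref{lem: gluing} then gives amenability of the $G\times G$ action on $\text{Sp}(J_k/J_{k-1})$.

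\textbf{Main obstacle.} The main difficulty lies in Step 2, in the inputs to Lemma \ref{subgroup amen}. That lemma requires the stabilizers ${\rm Stab}(g)=H_{\lambda_1}\cap gH_{\lambda_k}g^{-1}$ to be finite, which uses $g\notin H_{\lambda_1}$. Elements $g\in T\cap H_{\lambda_1}$ could have infinite stabilizer. I plan to handle them by noting that $T\cap\bigcup_\lambda H_\lambda$ is $\mathbb A_1$-small, hence $\mathbb A_{k-1}$-small for $k\ge2$. Those points therefore vanish in $\ell^\infty(T)/\ell^\infty(T)\cap J_{k-1}$, so I may replace $T$ by $T\setminus\bigcup_\lambda H_\lambda$ without changing $\Pi_T$.

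A second point to verify is that the pieces are genuinely clopen and disjoint, so that compact sets meet only finitely many of them, as Lemma \ref{lem: gluing} requires. This follows from the orthogonality statement in Theorem \ref{orthproj}.
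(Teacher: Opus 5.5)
Your proposal is correct and follows essentially the paper's route. You reduce to the successive quotients $J_k/J_{k-1}$ via the filtration, split the spectrum into the clopen pieces $\Pi_T$, identify their stabilizers by Corollary \ref{cor: stabilizer}, and combine Lemma \ref{subgroup amen} with the gluing Lemma \ref{lem: gluing}. Your extra care about points with infinite stabilizer repairs the paper's assertion, in the proof of Lemma \ref{subgroup amen}, that $T\subset G\setminus\bigcup_\lambda H_\lambda$. Note, however, that you only need to discard $T\cap H_{\lambda_1}$, and only when $\lambda_1=\lambda_k$. That set lies in the single set $H_{\lambda_1}\in\mathbb A_1$, so it is small with no finiteness assumption on $\Lambda$. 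By contrast, $T\cap\bigcup_\lambda H_\lambda$ is only obviously $\mathbb A_1$-small when $\Lambda$ is finite, and the theorem allows arbitrary $\Lambda$.
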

\begin{proof}
    Since $J_\infty$ is the inductive limit of $J_k$'s, it suffices to show that $J_k/J_{k-1}$ is $G\times G$ amenable for each $k\geq 2$, but this follows from Lemma \ref{subgroup amen}, Corollary \ref{cor: stabilizer}, and the gluing Lemma \ref{lem: gluing}.
\end{proof}

\begin{thm}\label{thm: main}
    If $G$ is finitely generated and hyperbolic relative to a finite family of exact subgroups $\mathcal{P}=\{H_{\lambda}\}_{\lambda \in \Lambda}$, then $G$ is biexact relative to $\mathcal{P}$.
\end{thm}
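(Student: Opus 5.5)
The plan is to combine Corollary \ref{cor: relative biexact to Jinf} with Theorem \ref{thm: hyperbolic embedding amenable action} through an extension argument for amenable actions. Set $\Gamma = G\times G$. Since the peripheral structure $\mathcal P$ is finite and $G$ is finitely generated, \cite[Proposition 4.28]{DGO17} gives $\{H_\lambda\}_{\lambda\in\Lambda}\hookrightarrow_h (G,X)$ for a finite generating set $X$. This lets us apply Theorem \ref{thm: hyperbolic embedding amenable action}: the $\Gamma$-action on $J_\infty/J_1$ is amenable. Independently, Corollary \ref{cor: relative biexact to Jinf} shows that the $\Gamma$-action on $\ell^\infty(G)/J_\infty$ is amenable. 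Because $J_1 = c_0(G,\mathcal P)$, the goal is to prove amenability of $\Gamma \curvearrowright \ell^\infty(G)/J_1$.

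The ideals fit into a short exact sequence of $\Gamma$-$C^*$-algebras
\[
0\to J_\infty/J_1 \to \ell^\infty(G)/J_1 \to \ell^\infty(G)/J_\infty \to 0 .
\]
Write $Y = \mathrm{Sp}(\ell^\infty(G)/J_1)$. By Gelfand duality the spectrum of the quotient is a closed $\Gamma$-invariant subset $Z=\mathrm{Sp}(\ell^\infty(G)/J_\infty)\subset Y$. The spectrum of the ideal is its open invariant complement $U = Y\setminus Z = \mathrm{Sp}(J_\infty/J_1)$. All three spaces are compact or locally compact Hausdorff.

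The key step is the standard fact that amenability of a $\Gamma$-action is stable under extensions: if $\Gamma\curvearrowright U$ and $\Gamma \curvearrowright Z$ are amenable, with $U$ open invariant and $Z$ its closed complement, then $\Gamma\curvearrowright Y$ is amenable. For exact groups this is Anantharaman-Delaroche's result that amenability of a $\Gamma$-$C^*$-algebra is preserved under extensions. For commutative algebras it is equivalent to nuclearity of the reduced crossed products, which passes through extensions provided the crossed product sequence stays exact. Here $G$ is exact: exactness passes to relatively hyperbolic groups with exact peripherals (Osin), and biexactness relative to $J_\infty$ already forces it by Theorem \ref{thm: biexact=properarrays}. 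Hence $\Gamma$ is exact, the sequence
\[
0\to (J_\infty/J_1)\rtimes_r\Gamma\to (\ell^\infty(G)/J_1)\rtimes_r\Gamma\to(\ell^\infty(G)/J_\infty)\rtimes_r\Gamma\to0
\]
is exact, and nuclearity of the outer terms gives nuclearity of the middle term. For a commutative $C^*$-algebra, nuclearity of the reduced crossed product is equivalent to amenability of the action, which gives amenability of $\Gamma\curvearrowright Y$.

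As a more hands-on alternative, one could glue directly. Take maps $m_i\colon Z\to\mathrm{Prob}(\Gamma)$ and extend them continuously to $Y$, using Tietze extension for $\ell^1$-valued maps, which works by compactness once the $m_i$ are finitely supported. Take maps $n_j\colon U\to \mathrm{Prob}(\Gamma)$ that are approximately equivariant on compact subsets of $U$. Combine them with a partition of unity $\phi,1-\phi$ in which $\phi$ equals $1$ near $Z$ and $1-\phi$ is compactly supported in $U$. The delicate point is choosing $\phi$ almost invariant under a given finite set of group elements, so that the error from the transition region stays small. This is exactly what the $C^*$-algebraic extension theorem handles, which is why I would rely on it.

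I expect the main obstacle to be justifying this extension step cleanly. That means either citing the permanence of amenability under extensions, which needs exactness of $G\times G$, or carrying out the almost-invariant partition of unity by hand. Everything else is assembly of the earlier results.
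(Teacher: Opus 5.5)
Your proposal is correct and is essentially the paper's proof. The paper just combines Theorem~\ref{thm: hyperbolic embedding amenable action} (applicable because relative hyperbolicity gives $\{H_\lambda\}_{\lambda\in\Lambda}\hookrightarrow_h(G,X)$ by \cite{DGO17}) with Corollary~\ref{cor: relative biexact to Jinf}, leaving implicit the extension step along $0\to J_\infty/J_1\to\ell^\infty(G)/J_1\to\ell^\infty(G)/J_\infty\to 0$ that you spell out.

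Your treatment of that step is fine, and it even works without exactness, using full crossed products (which agree with the reduced ones on the amenable outer terms). Of your two reasons for exactness of $G$, only the first is non-circular: Corollary~\ref{cor: relative biexact to Jinf} is itself derived from Theorem~\ref{thm: biexact=properarrays}, so it already presupposes exactness of $G$. That exactness comes from Ozawa's theorem (not Osin's) that relatively hyperbolic groups with exact peripherals are exact.
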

\begin{proof}
    This follows directly from Theorem \ref{thm: hyperbolic embedding amenable action} and Corollary \ref{cor: relative biexact to Jinf}. 
\end{proof}

\noindent{\bf Remarks}. Taken together, Theorems~\ref{thm: biexact=properarrays} and~\ref{thm: main} imply that every exact group that is relatively hyperbolic admits weakly-$\ell^2$ arrays that are proper relative to its natural peripheral structure. At present, however, we do not know of a direct geometric construction of such arrays, and we leave this as an open problem.

\section{Relative Solididity and Primeness Results}

\subsection{Popa's intertwining techniques}\label{popaint}  In \cite [Theorem 2.1 and Corollary 2.3]{Po03} Popa introduced a powerful analytic criterion for identifying intertwiners between arbitrary subalgebras of tracial von Neumann algebras, see Theorem \ref{corner} below. This technique,  known as \emph{Popa's intertwining technique},  has played an essential role in the classification of von Neumann algebras program via Popa's deformation/rigidity theory.  

\begin{thm}\emph{\cite{Po03}} \label{corner} Let $( \mathcal{M},\tau)$ be a  tracial von Neumann algebra and let $ \mathcal{P},  \mathcal{Q}\subseteq  \mathcal{M}$ be (not necessarily unital) von Neumann subalgebras. 
	Then the following are equivalent:
	\begin{enumerate}
		\item There exist projections $ p\in    \mathcal{P}, q\in    \mathcal{Q}$, a $\ast$-homomorphism $\theta:p  \mathcal{P} p\rightarrow q \mathcal{Q} q$  and a partial isometry $0\neq v\in  \mathcal{M} $ such that $v^*v\leqslant p$, $vv^*\leqslant q$ and $\theta(x)v=vx$, for all $x\in p  \mathcal{P} p$.
		\item For any group $\mathcal G\subset \mathscr U( \mathcal{P})$ such that $\mathcal G''=  \mathcal{P}$ there is no net $(u_n)_n\subset \mathcal G$ satisfying $\|E_{  \mathcal{Q}}(xu_ny)\|_2\rightarrow 0$, for all $x,y\in   \mathcal{M}$.
		\item There exist finitely many $x_i, y_i \in  \mathcal{M}$ and $C>0$ such that  $\sum_i\|E_{  \mathcal{Q}}(x_i u y_i)\|^2_2\geqslant C$, for all $u\in \mathscr U( \mathcal{P})$.

	\end{enumerate}
\end{thm}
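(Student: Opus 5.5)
This is Popa's intertwining theorem. I would prove the equivalences in the cycle (1)$\Rightarrow$(2)$\Rightarrow$(3)$\Rightarrow$(1), working throughout in the basic construction $\langle \mathcal M, e_{\mathcal Q}\rangle$ with its canonical semifinite trace $\mathrm{Tr}$. Two standard facts will be used repeatedly: $\mathrm{Tr}(x e_{\mathcal Q} y) = \tau(xy)$, and $\|E_{\mathcal Q}(x)\|_2^2 = \mathrm{Tr}(e_{\mathcal Q} x^* e_{\mathcal Q} x e_{\mathcal Q})$.

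\emph{Step (1)$\Rightarrow$(2).} Suppose $\theta$ and $v$ are given as in (1), and suppose toward a contradiction that there is a net $u_n\in\mathcal G$ with $\|E_{\mathcal Q}(xu_ny)\|_2\to 0$ for all $x,y\in\mathcal M$. By Kaplansky density, the unitaries $pu_np$ approximate elements of $p\mathcal P p$, and the vanishing condition passes to sums and limits. I would first arrange a vanishing net inside $\mathscr U(p\mathcal Pp)$, or more simply work with the vector $\xi=v e_{\mathcal Q}v^*$ directly. The intertwining relation gives $u v^* = v^*\theta(u)$ for $u\in p\mathcal P p$. Consequently, for $w_n=pu_np$,
\[\|E_{\mathcal Q}(v w_n v^*)\|_2 = \|E_{\mathcal Q}(vv^*\theta(w_n))\|_2=\|E_{\mathcal Q}(vv^*)\theta(w_n)\|_2 ,\]
which should stay bounded below, since $\theta(w_n)$ is close to unitary in $q\mathcal Q q$ and $E_{\mathcal Q}(vv^*)\neq 0$. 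The delicate point is controlling $pu_np$ compared with $u_n$. I would handle it by testing with $x=v$, $y=v^*$, using $v=vp$, so that $E_{\mathcal Q}(vu_nv^*)=E_{\mathcal Q}(v\,pu_np\,v^*)$. Then $\theta$ extended to the unital part gives the lower bound, provided $\theta(pu_np)$ has controlled $2$-norm; this follows from $\|v\,pu_np\,v^*\|_2$ and the relation $\theta(x)vv^* = v x v^*$.

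\emph{Step (2)$\Rightarrow$(3).} I would argue by contrapositive. If (3) fails, then for every finite $F\subset\mathcal M$ and every $\varepsilon>0$ there is $u\in\mathscr U(\mathcal P)$ with $\sum_{x,y\in F}\|E_{\mathcal Q}(xuy)\|_2^2<\varepsilon$. Indexing by pairs $(F,\varepsilon)$ gives a net, which handles the case $\mathcal G=\mathscr U(\mathcal P)$. For a general generating group $\mathcal G$, I would use the convexity argument of Step (3)$\Rightarrow$(1) in reverse. If no net in $\mathcal G$ works, then the minimal-norm element of the weak closure of $\mathrm{conv}\{u^*e u: u\in\mathcal G\}$ is nonzero and $\mathcal G''$-invariant. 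This produces the intertwiner of (1), hence (3) by the easy direction. Organizing the proof as (3)$\Rightarrow$(1)$\Rightarrow$(2) with $\mathcal G$ arbitrary, plus the trivial (2)$\Rightarrow$(3), is therefore cleaner.

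\emph{Step (3)$\Rightarrow$(1), the main obstacle.} Set $a=\sum_i y_i^* e_{\mathcal Q} y_i$, a positive element with $\mathrm{Tr}(a)<\infty$. Let $K$ be the $\|\cdot\|_{2,\mathrm{Tr}}$-closed convex hull of $\{u^*au:u\in\mathscr U(\mathcal P)\}$ and take its unique element $d$ of minimal norm. Then $d$ commutes with $\mathcal P$, satisfies $0\le d$ and $\mathrm{Tr}(d)\le \mathrm{Tr}(a)$, and $d\neq 0$ because $\sum_i\mathrm{Tr}(e x_i^* \cdot x_i e)$ applied to $u^*au$ is exactly $\sum\|E_{\mathcal Q}(x_iuy_j)\|_2^2\ge C$. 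I expect these estimates to need care with cross terms, which is why I would take $a=\sum_{i,j}$ over all pairs. A spectral projection $f=1_{[t,\infty)}(d)$ is then a nonzero finite-trace projection in $\mathcal P'\cap\langle\mathcal M,e_{\mathcal Q}\rangle$. Finite trace gives $f\mathrm L^2(\mathcal M)$ as a $\mathcal P$-$\mathcal Q$ bimodule of finite right $\mathcal Q$-dimension. By the standard argument, cutting by a projection under $e_{\mathcal Q}$ and using $\mathrm{Tr}$-finiteness to embed into $\mathbb M_n(\mathcal Q)$, one obtains $\theta: p\mathcal Pp\to q\mathbb M_n(\mathcal Q)q$ and a partial isometry $v$. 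Reducing from $\mathbb M_n(\mathcal Q)$ to $\mathcal Q$ by choosing a minimal-size corner gives (1).
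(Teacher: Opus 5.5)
Popa's theorem is only cited in the paper, so there is no proof there to compare against. Your overall route is Popa's: work in $\langle\mathcal M,e_{\mathcal Q}\rangle$, take the minimal-norm element of the closed convex hull of $\{u^*au\}$ to get a nonzero finite-trace projection $f\in\mathcal P'\cap\langle\mathcal M,e_{\mathcal Q}\rangle$, and get (2)$\Rightarrow$(3) by specializing (2) to $\mathcal G=\mathscr U(\mathcal P)$. Those parts are fine up to harmless adjoint conventions.

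The genuine gap is in (1)$\Rightarrow$(2). Testing only with $x=v$, $y=v^*$ gives $E_{\mathcal Q}(vuv^*)=\theta(pup)E_{\mathcal Q}(vv^*)$. This sees only the compression $pup$, which is not close to a unitary of $p\mathcal Pp$, and nothing keeps it away from $0$. For instance, take $\mathcal M=\mathcal P=\mathcal Q=\mathbb M_2(\mathbb C)$, $p=q=v=e_{11}$, $\theta=\mathrm{id}$, and $\mathcal G$ the Pauli group. Then $u=e_{12}+e_{21}\in\mathcal G$ gives $pup=0$, so no contradiction can come from the pair $(v,v^*)$ alone. Likewise, ``arranging a vanishing net in $\mathscr U(p\mathcal Pp)$'' is exactly the unproved implication $p\mathcal Pp\prec_{\mathcal M}\mathcal Q\Rightarrow\mathcal P\prec_{\mathcal M}\mathcal Q$, and Kaplansky density does not give it.

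The missing idea is to move $p$ around inside $\mathcal P$. Since $\mathcal P$ is finite, for $m$ large the central projection $z_0=1_{[1/m,\infty)}(\mathrm{ctr}_{\mathcal P}(p))$ satisfies $vz_0\neq 0$ and $z_0=\sum_{j=1}^m w_jw_j^*$ with $w_j\in\mathcal P$, $w_j^*w_j\le p$, $w_1=pz_0$. For $u\in\mathscr U(\mathcal P)$ one has $w_j^*uw_k\in p\mathcal Pp$. Because $z_0$ is central, $\sum_j(w_j^*uw_k)^*(w_j^*uw_k)=w_k^*w_k$. Hence
\[\sum_{j,k=1}^m\|E_{\mathcal Q}(vw_j^*uw_kv^*)\|_2^2=\sum_{k=1}^m\|\theta(w_k^*w_k)E_{\mathcal Q}(vv^*)\|_2^2\geq\|E_{\mathcal Q}(vz_0v^*)\|_2^2>0,\]
uniformly in $u$. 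This gives (3), and (2) for every $\mathcal G$, at once. Equivalently, a spectral projection $f_0$ of $v^*e_{\mathcal Q}v\in(p\mathcal Pp)'\cap p\langle\mathcal M,e_{\mathcal Q}\rangle p$ is transported to the finite-trace projection $\sum_j w_jf_0w_j^*\in\mathcal P'\cap\langle\mathcal M,e_{\mathcal Q}\rangle$.

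A second step also needs an actual argument. The last sentence of (3)$\Rightarrow$(1) is where the non-amplified form of (1) is decided, and ``choosing a minimal-size corner'' is not a proof. A corner $e_{ii}\otimes 1$ of $\mathbb M_n(\mathcal Q)$ need not commute with $\theta(p\mathcal Pp)$. Compressing $f$ by projections of $\mathcal P$ alone can also fail. For $\mathcal P=1\otimes L^\infty(X)\subset\mathcal Q=D_2\otimes L^\infty(X)\subset\mathcal M=\mathbb M_2(\mathbb C)\otimes L^\infty(X)$ and $f=1$, every $pL^2(\mathcal M)$ with $0\neq p\in\mathcal P$ has center-valued right $\mathcal Q$-dimension $2$ on its support. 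There one must use $e_{11}\otimes 1\in\mathcal P'\cap\mathcal M$ instead.

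What you need is a nonzero projection $f'\le pf$ in $(p\mathcal Pp)'\cap p\langle\mathcal M,e_{\mathcal Q}\rangle p$ with $f'\precsim e_{\mathcal Q}$. Given such $f'$, write $f'=w^*w$ and $ww^*=qe_{\mathcal Q}$ with $q\in\mathcal Q$. Then $\theta(x)e_{\mathcal Q}=wxw^*$ defines $\theta\colon p\mathcal Pp\to q\mathcal Qq$. The vector $\xi=w^*\hat{1}\in L^2(\mathcal M)$ satisfies $x\xi=\xi\theta(x)$, and the adjoint of the partial isometry in its polar decomposition is the required $v$.

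Producing $f'$ uses the relative commutant and a center-valued dimension count:
\begin{enumerate}
\item Cut $f$ by a central projection of $\langle\mathcal M,e_{\mathcal Q}\rangle$ so its center-valued dimension is at most some $N$.
\item On the type II part $z_{\rm II}$ of $\mathcal P$, take $p\le z_{\rm II}$ with $\mathrm{ctr}_{\mathcal P}(p)=z_{\rm II}/N$; then $f'=pf$ works.
\item On the type I part, take $p$ abelian. A masa of $(p\mathcal Pp)'\cap pf\langle\mathcal M,e_{\mathcal Q}\rangle fp$ is then a masa of $pf\langle\mathcal M,e_{\mathcal Q}\rangle fp$, and $f'$ is taken as a suitable projection in it.
\end{enumerate}
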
 
\vskip 0.02in
\noindent If one of the three equivalent conditions from Theorem \ref{corner} holds, then we say that \emph{ a corner of $ \mathcal{P}$ intertwines into $ \mathcal{Q}$ inside $ \mathcal{M}$}, and write $ \mathcal{P}\prec_{ \mathcal{M}} \mathcal{Q}$.

For further use we recall the following two basic lemmas on intertwining.

\begin{lemma}[Lemma 2.2 in \cite{CI18}]\label{intsubgroups} Let $H,K<G$ be groups, let $G \curvearrowright \mathcal N$ be a trace preserving action on a tracial von Neumann algebras $\mathcal N$. Denote by $\mathcal M = \mathcal N\rtimes G$ the corresponding crossed product von Neumann algebra. If $\mathcal L(H)\prec_{\mathcal M}\mathcal N \rtimes K$ then one can find $g\in G$ such that $[H: H\cap gKg^{-1}]<\infty$.    
\end{lemma}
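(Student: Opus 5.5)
The plan is to prove the contrapositive form through condition (2) of Theorem \ref{corner}. Assume $\mathcal L(H)\prec_{\mathcal M}\mathcal N\rtimes K$. The aim is to show that if $[H:H\cap gKg^{-1}]=\infty$ for every $g\in G$, then there is a net of unitaries $u_n=u_{h_n}$, $h_n\in H$, with $\|E_{\mathcal N\rtimes K}(xu_ny)\|_2\to 0$ for all $x,y\in\mathcal M$. This contradicts condition (2), since $\{u_h\}_{h\in H}$ is a group of unitaries generating $\mathcal L(H)$.

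The first step is to reduce to elementary $x,y$. By $\|\cdot\|_2$-density of the span of $\{au_g: a\in\mathcal N, g\in G\}$, together with the bound $\|E(xu_ny)\|_2\le\|x\|\,\|y\|_2$ (and symmetrically), it suffices to treat $x=a u_{g}$ and $y=u_{k}b$ with $a,b\in\mathcal N$ and $g,k\in G$. For such $x,y$ we get $xu_hy=a\,u_{g}u_h u_{k}\,b$, and $E_{\mathcal N\rtimes K}$ kills this term unless $ghk\in K$. If $ghk\in K$, the norm is at most $\|a\|\,\|b\|_2$. Finite sums are handled in the same way: each $x=\sum_i a_iu_{g_i}$ and $y=\sum_j u_{k_j}b_j$ contributes only through the finitely many conditions $g_ihk_j\in K$.

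The key combinatorial step is therefore the following. Given finite sets $F_1,F_2\subset G$, we need an $h\in H$ with $F_1hF_2\cap K=\emptyset$, that is, $h\notin\bigcup_{g\in F_1,k\in F_2} H\cap g^{-1}Kk^{-1}$. Each set $H\cap g^{-1}Kk^{-1}$ is either empty or a single left coset $h_0(H\cap k K k^{-1})$, after translating by an element $h_0$ in it. Indeed, if $h_0,h\in H\cap g^{-1}Kk^{-1}$, then $h_0^{-1}h\in H\cap kKk^{-1}$. By hypothesis each such subgroup $H\cap kKk^{-1}$ has infinite index in $H$. Neumann's lemma says a group is not a finite union of cosets of infinite-index subgroups, so we can choose $h=h_{F_1,F_2}$ outside this finite union. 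Directing the net by pairs $(F_1,F_2)$ of finite sets then gives $E_{\mathcal N\rtimes K}(xu_{h}y)=0$ eventually for elementary $x,y$, and hence $\to0$ in general by the density step.

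I expect the only genuinely nontrivial step to be the appeal to B.H. Neumann's covering lemma. The rest is the standard bookkeeping of approximating $x,y$ by finitely supported Fourier expansions and using that $E_{\mathcal N\rtimes K}(au_gb)=1_K(g)\,a\sigma_g(b)u_g$.
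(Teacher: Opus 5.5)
Your argument is correct. The paper only quotes this lemma from \cite{CI18} without proof, and what you give is the standard argument behind it: apply criterion (2) of Theorem \ref{corner} to the group $\{u_h\}_{h\in H}$, reduce to finitely supported $x,y$, note that $H\cap g^{-1}Kk^{-1}$ is either empty or a left coset of $H\cap kKk^{-1}$, and use B.~H.~Neumann's covering lemma to pick $h$ outside finitely many such cosets (in the density step, fix the finitely supported approximant of $y$ before approximating $x$, which is what your ``symmetrically'' amounts to).
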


\begin{lemma}\label{passtofiniteindexext}
 Let $\sP,\sQ, \sR\subseteq \sM$ be tracial von Neumann algebras such that $\sQ\subseteq \sR$ has finite index. If $\sQ\prec_\sM \sP$ then also $\sR\prec_\sM \sP$.     
\end{lemma}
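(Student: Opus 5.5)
We argue with condition (3) of Theorem~\ref{corner}, which tests intertwining on finitely many elements of $\sM$. Suppose $\sQ\prec_\sM\sP$. Then there are $x_1,\dots,x_n,y_1,\dots,y_n\in\sM$ and $C>0$ such that
\[\sum_i\|E_\sP(x_iuy_i)\|_2^2\ge C\quad\text{for all } u\in\mathscr U(\sQ).\]
We want the same type of bound for all $u\in\mathscr U(\sR)$, possibly after enlarging the finite families of $x_i$'s and $y_i$'s.

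The tool is a Pimsner--Popa basis. Since $\sQ\subseteq\sR$ has finite index, there are finitely many $m_1,\dots,m_N\in\sR$ with $x=\sum_j m_jE_\sQ(m_j^*x)$ for all $x\in\sR$. For a unitary $u\in\sR$ this gives $u=\sum_j m_j w_j$, where $w_j:=E_\sQ(m_j^*u)\in\sQ$ and $\|w_j\|\le\|m_j\|$. If the indexing is a problem, one can instead write $u=\sum_j E_\sQ(um_j)m_j^*$; the argument is symmetric.

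The obstacle is that the $w_j$ are not unitaries, so the bound from condition (3) does not apply to them directly. I see two ways around this.

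\emph{Route 1 (nets).} Use the contrapositive of condition (2). Suppose $\sR\not\prec_\sM\sP$. Then there is a net $(u_k)\subset\mathscr U(\sR)$ with $\|E_\sP(xu_ky)\|_2\to0$ for all $x,y\in\sM$. The $\sQ$-valued coefficients $w_j^{(k)}=E_\sQ(m_j^*u_k)$ should then also satisfy $\|E_\sP(xw_j^{(k)}y)\|_2\to0$, and from this one wants a net of unitaries in $\sQ$ with the same property, contradicting $\sQ\prec_\sM\sP$. Producing those unitaries requires a polar-decomposition or averaging argument, and this is the technical heart of the proof.

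\emph{Route 2 (bimodules), which I would pursue first.} This is cleaner. Since $\sQ\prec_\sM\sP$, Popa's theorem gives a nonzero $\sQ$-$\sP$ subbimodule $\mathcal K\subseteq L^2(\sM)$ with $\dim_\sP\mathcal K<\infty$. Set
\[\mathcal K':=\overline{\textstyle\sum_j m_j\mathcal K}.\]
This space is invariant under left multiplication by $\sR$, because $rm_j=\sum_l m_lE_\sQ(m_l^*rm_j)$ and $\mathcal K$ is left $\sQ$-invariant. It is also right $\sP$-invariant. Finally, $\dim_\sP\mathcal K'\le N\dim_\sP\mathcal K<\infty$, since each $m_j\mathcal K$ is the image of $\mathcal K$ under a bounded right $\sP$-module map.

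So $\mathcal K'$ is a nonzero $\sR$-$\sP$ subbimodule of $L^2(\sM)$ of finite right $\sP$-dimension, and the bimodule characterization of intertwining (equivalent to condition (1)) gives $\sR\prec_\sM\sP$.

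The only step needing care is the dimension estimate together with the bimodule form of Theorem~\ref{corner}. The latter is standard but is not among the equivalent conditions listed in the paper, so it would be cited from \cite{Po03}.
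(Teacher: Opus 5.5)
The paper gives no proof of this lemma; it is only recalled as a standard fact about intertwining. Your Route~2 is a correct and complete proof, and it is the standard argument, so it settles the statement. Route~1 is unnecessary and, as written, is not a proof: the passage from the non-unitary coefficients $w_j$ to unitaries in $\sQ$ is never carried out. You should simply drop it.

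Three small points are worth making explicit in Route~2.

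First, read ``finite index'' in the Pimsner--Popa sense: $E_\sQ(x)\ge c\,x$ for all $x\in\sR_+$ and some $c>0$. This is what provides the finite basis $m_1,\dots,m_N$, and that basis is the only input you use from the finite index hypothesis.

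Second, $\mathcal K'$ equals the closed span of $\sR\mathcal K$. Since $1\in\sR$, it contains $\mathcal K$, and so it is nonzero.

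Third, the dimension bound is cleanest via a single map. The map $\mathcal K^{\oplus N}\to L^2(\sM)$, $(\xi_j)_j\mapsto\sum_j m_j\xi_j$, is bounded and right $\sP$-linear, and its range is dense in $\mathcal K'$. Polar decomposition then embeds $\mathcal K'$ into $\mathcal K^{\oplus N}$ as right $\sP$-modules, which gives $\dim_\sP\mathcal K'\le N\dim_\sP\mathcal K$.

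If you prefer to stay entirely inside Popa's finite-trace-projection criterion, the same proof reads as follows. Take a nonzero projection $f\in\sQ'\cap\langle\sM,e_\sP\rangle$ with $\mathrm{Tr}(f)<\infty$, and set $F=\sum_j m_jfm_j^*$. Then:
\begin{enumerate}
\item $F$ commutes with $\sR$; expand $rm_j$ and $m_l^*r$ in the basis.
\item $\mathrm{Tr}(F)\le\sum_j\|m_j\|^2\,\mathrm{Tr}(f)<\infty$.
\item $F\neq 0$, because $f=\sum_jE_\sQ(m_j)fm_j^*$.
\end{enumerate}
A suitable spectral projection of $F$ then witnesses $\sR\prec_\sM\sP$.
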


\subsection{Relative solidity results}

In this subsection, we briefly record several relative solidity results for crossed products arising from actions of relatively hyperbolic groups. These significantly strengthen \cite[Theorems 5.2--5.3]{AMAKCK} by removing the residual finiteness assumption on the peripheral subgroups of the relatively hyperbolic group. Since the proofs are identical to those in \cite{AMAKCK}, we omit them. Indeed, they carry over verbatim, with the only modifications being the replacement of \cite[Theorem 5.1]{AMAKCK} by Theorem~\ref{thm: main}. 
\vskip 0.05in
To state these results, we first recall the notion of the one-sided quasi-normalizer. For an inclusion of von Neumann algebras $\sA\subseteq \sM$, let $\mathcal{QN}^1_\sM(\sA)=\{x\in \mathcal M \,:\, \text{ exist } x_1, \ldots, x_n \in\mathcal M \text{ such that } \mathcal A x\subseteq \sum_i x_i\mathcal A  \}$ denote the set of one-sided quasi-normalizers of $\sA$ in $\sM$. Also its quasi-normalizer is defined as $\mathcal {QN}_{\mathcal M}(\mathcal A)= \mathcal{QN}^1_{\sM}(\sA)\cap (\mathcal{QN}^1_{\sM}(\sA))^*$. 

Now let  $\mathcal{A}^0=\mathcal{A}$. We then define, by transfinite induction, a tower of von Neumann algebras associated with the one-sided quasi-normalizer. If $\beta$ is a successor ordinal, define $\mathcal{A}^{\beta}= \mathcal {QN}^1_\sM(\sA^{\beta-1})''$. Observe that $\mathcal{A}^{\beta-1}\subseteq \mathcal{A}^{\beta}$. If $\beta$ is a limit ordinal, let $\mathcal{A}^{\beta}=\overline{\bigcup_{\gamma<\beta}\sA^\gamma}^{WOT}$. Finally, let $\alpha$ be the least ordinal for which the chain $(\mathcal{A}^{\beta})_{\beta}$ stabilizes, that is, $\mathcal{A}^{\alpha}=\mathcal{A}^{\alpha+1}$. We denote this terminal von Neumann algebra by $\overline{\mathcal{QN}}_{\sM}^{1}(\mathcal{A})$.

\begin{thm}\label{commut1} Let $G$ be a finitely generated, non-elementary group that is hyperbolic relative to exact proper subgroups $\{ H_1, \ldots, H_n \}.$  Let $G \curvearrowright \mathcal N$ be a trace preserving action on an amenable von Neumann algebra and denote by $\mathcal{M}=\mathcal N \rtimes G$ be the corresponding cross-product von Neumann algebra.  Let $p \in \sM$ be a nonzero projection and let $\mathcal{A} \subseteq p\mathcal{M}p$ be a diffuse von Neumann subalgebra such that $\mathcal{A}' \cap p\mathcal{M}p$ has no amenable direct summand. Let $\mathcal{Q} = \mathcal{QN}_{p\mathcal{M}p}(\mathcal{A})^{''}.$ Then one can find $i\in \overline{1,n}$, projections $r \in  \mathcal{A}, q \in \mathcal{A}' \cap p\mathcal{M}p$ with $ rq \neq 0$ and $u \in \mathscr{U}(\mathcal{M})$ such that \begin{equation}\label{cont3}u(rq \overline{\mathcal{QN}}_{p\mathcal{M}p}^{1}(\mathcal{Q})r q)u^* \subseteq \mathcal N \rtimes H_i.\end{equation}
If in addition $\sN\rtimes H_i$ are factors then in \eqref{cont3} we can replace $r q$ by its central support in $\overline{\mathcal{QN}}_{p\mathcal{M}p}^{1}(\mathcal{Q}).$
    
\end{thm}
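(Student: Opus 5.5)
The plan is to follow the scheme of \cite[Theorems 5.2--5.3]{AMAKCK}, replacing their relative bi-exactness input with Theorem~\ref{thm: main}. We argue by contradiction. Suppose that for every $i$ and every choice of projections as in the statement, no unitary conjugates the corresponding corner of $\overline{\mathcal{QN}}^{1}_{p\sM p}(\sQ)$ into $\sN\rtimes H_i$. The first step is to show that, after cutting down by suitable projections $r\in\sA$ and $q\in\sA'\cap p\sM p$, we have $\sA rq\not\prec_{\sM}\sN\rtimes H_i$ for all $i$. This uses that $\sA$ is diffuse: the factors $\sN\rtimes H_i$ are only "small" pieces of $\sM$ in the relative sense.

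The second step applies the relative bi-exactness of $G$ with respect to $\mathcal P$ (Theorem~\ref{thm: main}) together with amenability of $\sN$. Through the Ozawa--Popa / Boutonnet--Ioana relative solidity machinery (\cite[Chapter 15]{BO08}, in the crossed product form used in \cite{AMAKCK}), we obtain the following dichotomy for the commuting pair $\sA rq$ and $(\sA'\cap p\sM p)rq$. Either $\sA rq\prec_\sM \sN\rtimes H_i$ for some $i$, or $(\sA'\cap p\sM p)rq$ is amenable relative to $\sN$, and hence amenable. The second alternative contradicts the hypothesis that $\sA'\cap p\sM p$ has no amenable direct summand. So we get an intertwining $\sA r'q'\prec_\sM \sN\rtimes H_i$. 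Unwinding Theorem~\ref{corner}, we conjugate by a unitary $u$ so that a corner of $\sA$ sits inside $\sN\rtimes H_i$.

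The third step upgrades this to the quasi-normalizer tower. Here the key input is almost malnormality, Proposition~\ref{Prop:maln}. Combined with Lemma~\ref{intsubgroups}, it gives the following control: if a diffuse subalgebra $\sB\subseteq \sN\rtimes H_i$ does not intertwine into $\sN$, then every one-sided quasi-normalizer of $\sB$ in $\sM$ lies in $\sN\rtimes H_i$ (the standard Popa-type lemma for almost malnormal subgroups). We apply it by transfinite induction along $\sA^\beta$, first to $\sQ$ and then through successor and limit ordinals, which yields the inclusion \eqref{cont3}. The factoriality refinement follows by the usual argument: one moves the partial isometries within the factor $\sN\rtimes H_i$ and patches them along the central support.

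I expect the main obstacle to be the third step. To apply the malnormality control lemma we need the intertwined corner to avoid $\sN$, that is $\sA rq\not\prec\sN$. This must be arranged from diffuseness together with the non-amenability of the relative commutant, since if $\sA'\cap p\sM p\prec\sN$-type behaviour occurred, amenability of $\sN$ would contradict the hypothesis. Handling this carefully is exactly what \cite{AMAKCK} does, and the argument transfers verbatim.
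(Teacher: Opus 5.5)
Your overall route is the paper's. Its proof of Theorem~\ref{commut1} is the argument of \cite{AMAKCK} with Theorem~\ref{thm: main} substituted for \cite[Theorem 5.1]{AMAKCK}, and your Steps 2--3 are the right skeleton: the relative-solidity dichotomy, followed by Popa's control of one-sided quasi-normalizers over the almost malnormal $H_i$, pushed through the tower $\sA^\beta$.

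The genuine gap is the step you yourself flag. You claim that $\sA rq\nprec_\sM\sN$ can be arranged from diffuseness of $\sA$ and non-amenability of $\sA'\cap p\sM p$, since otherwise amenability of $\sN$ would contradict the hypothesis. This conflates intertwining of $\sA$ with intertwining of its relative commutant: $\sA\prec_\sM\sN$ imposes no amenability on $\sA'\cap p\sM p$. In fact the non-intertwining cannot be obtained from the stated hypotheses at all. Take $G=\mathbb F_2=\mathbb Z*\mathbb Z$, hyperbolic relative to its two free factors, acting trivially on $\sN=L^\infty[0,1]$, and let $p=1$, $\sA=\sN$. Then $\sA$ is diffuse and central in $\sM=\sN\bar\otimes L(\mathbb F_2)$. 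Hence $\sA'\cap\sM=\sM$ has no amenable direct summand, and $\sQ=\overline{\mathcal{QN}}^{1}_{\sM}(\sQ)=\sM$. The conclusion would then conjugate a nonzero corner of the type II$_1$ algebra $\sM$ into the abelian algebra $\sN\bar\otimes L(H_i)$, which is absurd.

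So what is missing is a hypothesis rather than an argument. One needs $\sA\nprec_\sM\sN$, as in Theorem~\ref{commut2} and, in cornerwise form, in Corollary~\ref{main: relsol}; this is equivalent to diffuseness only when $\sN=\mathbb C$. The statement as written, with diffuseness alone, has the same defect.

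Once $\sA\nprec_\sM\sN$ is assumed, your argument closes:
\begin{enumerate}
\item The dichotomy gives $\sA\prec_\sM\sN\rtimes H_i$ for some $i$.
\item By transitivity of intertwining, as in the proof of Theorem~\ref{intmalnormal}, the intertwining $\theta$ can be chosen with $\theta(r\sA r)\nprec_{\sN\rtimes H_i}\sN$.
\item Since $H_i\cap gH_ig^{-1}$ is finite for $g\notin H_i$, Popa's criterion $\theta(r\sA r)\nprec\sN\rtimes(H_i\cap gH_ig^{-1})$ reduces exactly to this condition.
\item Every $\sA^\beta$ contains $\sA$, so the control of one-sided quasi-normalizers propagates through successor and limit ordinals.
\end{enumerate}

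Two smaller corrections. Your Step 1 cannot follow from diffuseness, since Step 2 yields precisely the opposite intertwining; drop the contradiction framing and argue directly. Also, the quasi-normalizer control comes from Popa's argument as in Theorem~\ref{intmalnormal}, not from Lemma~\ref{intsubgroups}.
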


\begin{thm}\label{commut2}
 Let $G$, $G \curvearrowright \mathcal N$ and $\mathcal{M}=\mathcal N \rtimes G$ be as in Theorem \ref{commut1}.    Let $p \in \sM$ be a nonzero projection and let $\mathcal{A} \subseteq p\mathcal{M}p$ be a von Neumann subalgebra such that $\mathcal A \nprec_\sM \mathcal N$.  Denote by $\mathcal{Q}=\mathcal{QN}_{p\mathcal{M}p}(\mathcal{A})^{''}.$ Then one can find orthogonal projections $p_0,  \dots, p_n \in \mathscr{Z}(\overline {\mathcal{QN}}_{p \sM p}^{1}(\mathcal{Q}))$ with $p_0 + \dots + p_n = p$ that are maximal with the following properties: 
    \begin{enumerate}
        \item $(\mathcal{A}^{'} \cap p\mathcal{M}p)p_0$ is amenable.
        \item Any corner of $\overline {\mathcal{QN}}_{p\sM p}^{1}(\mathcal{Q})p_i$ has a nontrivial subcorner which can be unitarily conjugated into $\mathcal N \rtimes H_i$, for every $i=\overline{1,n}$.
         \item If in addition $\mathcal N \rtimes  H_i$ are factors then one can find $u_1, \ldots, u_n \in \mathscr U(\mathcal{M})$ such that $u_i \overline{\mathcal{QN}}_{\mathcal{M}}^{1}(\mathcal{Q})p_i u_i^{*} \subseteq \mathcal N \rtimes  H_i$, for every $i=\overline{1,n}.$ In particular, when $n=1$ and $\sA'\cap p\sM p$ has no amenable direct summand there is $u\in \mathscr U(\mathcal M)$  such that \begin{equation*}u \overline{\mathcal{QN}}_{\mathcal{M}}^{1}(\mathcal{Q}) u^{*} \subseteq \mathcal N \rtimes  H_1.\end{equation*} 
\end{enumerate}
\end{thm}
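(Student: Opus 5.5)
The proof follows the argument of \cite[Theorem 5.3]{AMAKCK} essentially verbatim, with Theorem \ref{thm: main} replacing \cite[Theorem 5.1]{AMAKCK}. Since the peripheral subgroups are exact and finitely many, Theorem \ref{thm: main} says that $G$ is biexact relative to $\mathcal P$. Because $\mathcal N$ is amenable, the standard consequence of relative biexactness (Ozawa's relative solidity, \cite[Chapter 15]{BO08}, in the crossed-product form used in \cite{AMAKCK,PV12}) applies. It says that for any $\mathcal A\subseteq p\sM p$ with $\mathcal A\nprec_\sM \mathcal N$, there is a projection $e\in \mathscr Z(\mathcal A'\cap p\sM p)$ with two properties: $(\mathcal A'\cap p\sM p)e$ is amenable (equivalently, amenable relative to $\mathcal N$, which is itself amenable), and on $p-e$ the relative commutant intertwines into $\mathcal N\rtimes H_i$ for some $i$. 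More precisely, for every nonzero $f\le p-e$ in that center, $(\mathcal A'\cap p\sM p)f\prec_\sM \mathcal N\rtimes H_i$ for some $i$.

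The next step upgrades this intertwining of the relative commutant to intertwining of $\overline{\mathcal{QN}}^1_{p\sM p}(\mathcal Q)$. First, almost malnormality (Proposition \ref{Prop:maln}) together with Lemma \ref{intsubgroups} shows that $\mathcal N\rtimes H_i$ is mixing-like inside $\sM$ relative to $\mathcal N$. Concretely, if $\mathcal B\subseteq q(\mathcal N\rtimes H_i)q$ satisfies $\mathcal B\nprec \mathcal N$, then its quasi-normalizer stays inside $\mathcal N\rtimes H_i$. The elements $g\notin H_i$ have $H_i\cap gH_ig^{-1}$ finite, and $H_i\cap gH_jg^{-1}$ is finite for $j\neq i$. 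Hence all "off-diagonal" parts of a quasi-normalizing element must vanish, which is the computation in \cite[Lemma 2.?]{AMAKCK}, itself based on Popa's argument.

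This control lets the intertwining propagate in stages. It passes from $\mathcal A'\cap p\sM p$ to $\mathcal A\vee(\mathcal A'\cap p\sM p)$, then to $\mathcal Q$, and then, by transfinite induction along the one-sided quasi-normalizer tower $\mathcal A^\beta$, to $\overline{\mathcal{QN}}^1_{p\sM p}(\mathcal Q)$. The successor steps use one-sided quasi-normalizers, and Lemma \ref{passtofiniteindexext} handles the finite-index passages. The limit steps use that an increasing union of algebras whose corners sit in $\mathcal N\rtimes H_i$, controlled by a single partial isometry, stays intertwined.

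With intertwining established, the projections $p_0,\dots,p_n$ are built by a maximality argument. The projection $p_i$ is taken to be the supremum of the central projections $z\in\mathscr Z(\overline{\mathcal{QN}}^1_{p\sM p}(\mathcal Q))$ such that every corner of $\overline{\mathcal{QN}}^1(\mathcal Q)z$ has a subcorner unitarily conjugate into $\mathcal N\rtimes H_i$. Different $i$ give orthogonal projections, because a corner cannot embed into both $\mathcal N\rtimes H_i$ and $\mathcal N\rtimes H_j$ without intertwining into $\mathcal N$. That would contradict $\mathcal A\nprec\mathcal N$, via Lemma \ref{intsubgroups} and the finiteness of $H_i\cap gH_jg^{-1}$. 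The remainder $p_0=p-\sum_i p_i$ is then forced by the dichotomy to have amenable relative commutant.

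For part (3), if the algebras $\mathcal N\rtimes H_i$ are factors, the partial isometries implementing the corner embeddings can be patched together using factoriality and a standard exhaustion argument into a single unitary $u_i$. The case $n=1$ with no amenable summand then gives $p_0=0$ and $p_1=p$.

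The main obstacle is the quasi-normalizer control step: showing that intertwining into $\mathcal N\rtimes H_i$ propagates through the transfinite one-sided quasi-normalizer tower. Since the statement is identical to \cite[Theorem 5.3]{AMAKCK}, I would cite that computation, and only check that residual finiteness was used there solely through \cite[Theorem 5.1]{AMAKCK}, which Theorem \ref{thm: main} now replaces.
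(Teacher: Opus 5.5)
Your proposal matches the paper, which also omits the argument and states that the proofs of \cite[Theorems 5.2--5.3]{AMAKCK} carry over verbatim once \cite[Theorem 5.1]{AMAKCK} is replaced by Theorem~\ref{thm: main}. One small correction to your sketch: relative bi-exactness directly gives $\mathcal A f\prec_{\sM}\mathcal N\rtimes H_i$ on the part $f$ where $\mathcal A'\cap p\sM p$ has no amenable summand, not the intertwining of $(\mathcal A'\cap p\sM p)f$ itself; that intertwining, and the intertwining of the larger algebras, then comes from the almost-malnormal control of quasi-normalizers, using $\mathcal A f\nprec_{\sM}\mathcal N$.
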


\begin{cor}
    If $G$ is finitely generated, non-elementary and hyperbolic relative to a finite family of proper exact subgroups $\mathcal{H}=\{H_{\lambda}\}_{\lambda \in \Lambda}$, then $L(G)$ is s-prime. That is, for any nonzero projection $p\in L(G)$ there are  no commuting diffuse von Neumann subalgebras $\sP, \sQ\subset pL(G)p$ which generate together a von Neumann algebra $\sP\vee \sQ\subseteq pL(G)p$ of finite index, $[pL(G)p: \sP\vee\sR]<\infty$. In particular, $L(G)$ is prime.
\end{cor}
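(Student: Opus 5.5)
We argue by contradiction and reduce s-primeness to the relative solidity statement, Theorem \ref{commut1}. Suppose $p\in L(G)$ is a nonzero projection and $\sP,\sQ\subset pL(G)p$ are commuting diffuse von Neumann subalgebras with $[pL(G)p:\sP\vee\sQ]<\infty$. We work with $\mathcal N=\mathbb C$, so $\sM=L(G)$, and the peripheral subgroups are finitely many, exact and proper.

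\textbf{Step 1: $\sQ$ has no amenable direct summand.} Suppose instead that $\sQ z$ is amenable for some nonzero central projection $z\in\mathscr Z(\sQ)$. Since $\sP$ and $\sQ$ commute, $z$ is central in $\sP\vee\sQ$. We may choose the $\sP z$ diffuse and $\sP z\subset \sQ z'\cap z\sM z$.
\begin{enumerate}
\item First I would show that $\sP z$ itself is non-amenable. Otherwise $(\sP\vee\sQ)z$ is amenable. Since it has finite index in $zL(G)z$, this would make $zL(G)z$ amenable, i.e.\ give $G$ an amenable corner, contradicting non-amenability of $G$ (non-elementary relatively hyperbolic groups contain free subgroups).
\item Next apply the relative solidity (Theorem \ref{main: relsol}, taking $\mathcal N=\mathbb C$, with $\sA=\sQ z$) together with Ozawa-type solidity for $\sQ z$ diffuse amenable. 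This should place $(\sQ z\vee \sP z)$ into some $L(H_i)$.
\end{enumerate}
To keep the argument uniform, I would avoid this case split. Instead I apply Theorem \ref{commut2} directly with $\sA=\sP$, which is diffuse, so that $\sA\nprec_\sM \mathbb C$ holds automatically (diffuseness is exactly non-intertwining into $\mathbb C1$). Theorem \ref{commut2} then gives central projections $p_0,\dots,p_n$ with:
\begin{enumerate}
\item $(\sP'\cap p\sM p)p_0$ amenable;
\item every corner of $\overline{\mathcal{QN}}^1(\mathcal{QN}(\sP)'')p_i$ having a subcorner unitarily conjugate into $L(H_i)$.
\end{enumerate}

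\textbf{Step 2: the summand $p_i$ with $i\geq 1$ is impossible.} Note that $\sQ\subset \sP'\cap p\sM p\subset\mathcal{QN}(\sP)''$, hence $\sP\vee\sQ\subset \overline{\mathcal{QN}}^1(\mathcal{QN}(\sP)'')$. Since $\sP\vee\sQ$ has finite index in $pL(G)p$, every $x\in pL(G)p$ is a one-sided quasi-normalizer of $\sP\vee\sQ$ (Pimsner–Popa basis). Therefore $\overline{\mathcal{QN}}^1(\mathcal{QN}(\sP)'')=pL(G)p$, and the central projections are central in $pL(G)p$.

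If some $p_i\neq0$ with $i\ge1$, then a corner of $L(G)$ embeds into $L(H_i)$, i.e.\ $L(G)\prec_{L(G)} L(H_i)$. By Lemma \ref{intsubgroups}, this gives $[G:G\cap gH_ig^{-1}]<\infty$, so $H_i$ has finite index in $G$. Since $H_i$ is proper, $G$ is infinite and $H_i$ is almost malnormal (Proposition \ref{Prop:maln}), a finite index proper peripheral subgroup would force $G$ to be elementary. This is a contradiction, so $p_1=\dots=p_n=0$ and $p_0=p$.

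\textbf{Step 3: the amenable summand, and the main obstacle.} From Step 2 we get $\sP'\cap pL(G)p$ amenable. By symmetry, applying the same argument with the roles exchanged shows $\sQ'\cap pL(G)p$ is amenable. Since $\sQ\subset\sP'\cap pL(G)p$ and $\sP\subset\sQ'\cap pL(G)p$, both $\sP$ and $\sQ$ are amenable.

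The step I expect to be the main obstacle is upgrading this to amenability of $\sP\vee\sQ$, since a von Neumann algebra generated by two commuting amenable subalgebras need not be amenable in general.
\begin{enumerate}
\item The fix is to apply Theorem \ref{commut2} to $\sA=\sP$ again. The algebra $\sP\vee\sQ$ is contained in $\sP\vee(\sP'\cap pL(G)p)$.
\item For amenable $\sP$ and amenable $\sP'\cap p\sM p$, this algebra is generated by an amenable algebra and its amenable relative commutant. It is therefore a quotient of $\sP\bar\otimes(\sP'\cap p\sM p)$ at the level of the binormal tensor product, hence amenable.
\end{enumerate}
Consequently $\sP\vee\sQ$ is amenable. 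Passing to its finite index extension, $pL(G)p$ is amenable, which contradicts the non-amenability of $G$. This proves $L(G)$ is s-prime. Primeness follows by taking $p=1$, $\sP=\sM_1$ and $\sQ=\sM_2$ in any tensor decomposition $L(G)=\sM_1\bar\otimes\sM_2$ into diffuse factors.
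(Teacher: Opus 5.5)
Your proof is correct, but it orders the argument differently from the paper. The paper first uses non-amenability of $G$ and finite index to see that $\sP\vee\sQ$ is non-amenable. It then cuts down by a central projection so that one algebra, say $\sQ$, has no amenable direct summand, and applies Theorem~\ref{commut2} once, to $\sA=\sP$. The summand $p_0$ must vanish because $\sQ\subseteq \sP'\cap p\sM p$, so some $p_i$ with $i\geq 1$ is nonzero, and Lemmas~\ref{passtofiniteindexext} and \ref{intsubgroups} give $[G:H_i]<\infty$.

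You run the two exclusions in the opposite order. You apply Theorem~\ref{commut2} to $\sP$ and to $\sQ$ with no preliminary reduction, and rule out the peripheral summands first; your observation that finite index forces $\overline{\mathcal{QN}}^1_{p\sM p}(\mathcal{QN}_{p\sM p}(\sP)'')=pL(G)p$ plays the role of Lemma~\ref{passtofiniteindexext}. You then conclude that $\sP$ and $\sQ$ are amenable, and only at the end contradict non-amenability of $G$. The paper's route needs one application of Theorem~\ref{commut2}; yours is symmetric and avoids the cut-down. Both rest on the same fact, which the paper uses implicitly in its cut-down and you use explicitly in Step~3: commuting amenable subalgebras generate an amenable algebra.

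That fact is not an obstacle, and your remark that it can fail in general is wrong. Let $A,B\subseteq B(H)$ be commuting injective von Neumann algebras. A conditional expectation $B(H)\to A'$ is $B$-bimodular (since $B\subseteq A'$), so it maps $B'$ into $A'\cap B'$. Precomposing it with a conditional expectation $B(H)\to B'$ gives one onto $A'\cap B'=(A\vee B)'$, so $A\vee B$ is injective. In your separable tracial setting you can alternatively exhaust $\sP$ and $\sQ$ by increasing finite-dimensional subalgebras, whose joins are finite dimensional. Your ``quotient of the binormal tensor product'' sentence does not by itself give injectivity, so replace it with one of these arguments.

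Two smaller points. Step~1 as written is neither proved nor used and should be deleted. In Step~2, the precise contradiction is that a proper, finite index, almost malnormal $H_i$ forces $G$ to be finite: for $g\notin H_i$, the subgroup $H_i\cap g^{-1}H_ig$ is both finite and of finite index.
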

\begin{proof} Assume by contradiction that such $\sP,\sQ\subseteq pL(G)p$ exist. Since $G$ is nonamenable the finite index assumption implies that so is $\sP\vee\sR$. Further cutting by a nonzero subprojection of $p$ we can further assume that either $\sP$ or $\sR$  has no amenable direct summand. Thus Theorem \ref{commut2} implies there exists a peripheral subgroup $H_i\in \mathcal H$ such that nontrivial corners of $\sP\vee \sR$ can be unitarily conjugated in $L(H_i)$. By Lemma \ref{passtofiniteindexext}, the finite index assumption further yields $L(G )\prec_{L(G)} L(H_i)$. Using Lemma \ref{intsubgroups} we get $[G:H_i]<\infty$, which is a contradiction.
\end{proof}

\noindent{\bf Remarks.} We note in passing that primeness of $L(G)$ can also be deduced directly from \cite[Proposition 4.11]{Oya24} and Theorem \ref{thm: main}. 

\section{W$^*$-rigidity Results for Actions of Property (T) Groups}\label{sec: W*-rigidity}
This section constructs a continuum of property (T) groups satisfying the main W$^*$-rigidity result stated in Theorem~\ref{mainresult:nonW^*e}. Our approach combines the structural results established in the previous sections with techniques from the theories of Cartan subalgebras and measure equivalence. We begin by outlining these ingredients, which are developed in the following subsections.

\subsection{Uniqueness of Cartan subalgebras}

The first is a unique Cartan subalgebra theorem for actions of certain relatively hyperbolic groups. Its first technical ingredient is the following control of quasinormalizers theorem for crossed-product von Neumann algebras.

\begin{thm}\label{intmalnormal}
Let $H<G$ be groups such that $H$ is malnormal in $G$. Let $G\curvearrowright\mathcal B$ be a trace-preserving action on a tracial von Neumann algebra and let $\mathcal M=\mathcal B\rtimes G$ be the corresponding crossed product von Neumann algebra. Let $p\in\mathcal M$ be a projection, and let $\mathcal Q\subseteq p\mathcal Mp$ be a von Neumann subalgebra such that $\mathcal Q\prec_{\mathcal M}\mathcal B\rtimes H$ and $\mathcal Q\nprec_{\mathcal M}\mathcal B$. Then there exist a unitary $u\in\mathcal M$ and projections $e\in\mathcal Q$ and $r\in\mathcal Q'\cap p\mathcal Mp$ such that
\begin{equation}\label{cont2}
ue\,\mathcal{QN}''_{p\mathcal Mp}(\mathcal Q)\,eru^*\subseteq\mathcal B\rtimes H.
\end{equation}
In particular, every unital von Neumann subalgebra $\mathcal P\subseteq\mathcal{QN}''_{p\mathcal Mp}(\mathcal Q)$ satisfies $\mathcal P\prec_{\mathcal M}\mathcal B\rtimes H$.
\end{thm}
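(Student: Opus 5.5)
We follow the standard Popa-type argument for controlling quasi-normalizers of subalgebras intertwining into a crossed product by a malnormal subgroup (as in Popa's malleable deformations and \cite{IPP05}). The first step is to convert $\mathcal Q\prec_{\mathcal M}\mathcal B\rtimes H$ into concrete intertwining data. Theorem \ref{corner} gives projections $q\in\mathcal Q$ and $f\in\mathcal B\rtimes H$, a $\ast$-homomorphism $\theta:q\mathcal Qq\to f(\mathcal B\rtimes H)f$, and a nonzero partial isometry $v$ with $\theta(x)v=vx$ for all $x\in q\mathcal Qq$.

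We then shrink this data so that $\theta(q\mathcal Qq)\nprec_{\mathcal B\rtimes H}\mathcal B$. This is possible because $\mathcal Q\nprec_{\mathcal M}\mathcal B$: if a corner of $\theta(q\mathcal Qq)$ intertwined into $\mathcal B$ inside $\mathcal B\rtimes H$, then composing the two intertwiners would give a corner of $\mathcal Q$ intertwining into $\mathcal B$ inside $\mathcal M$. The composed intertwiner is nonzero after a standard adjustment of $v$, in which we replace $f$ by the support of $E_{\mathcal B\rtimes H}(vv^*)$ and use that $vv^*\in\theta(q\mathcal Qq)'\cap f\mathcal Mf$.

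The key step is the malnormal control lemma. Let $\mathcal D\subseteq f(\mathcal B\rtimes H)f$ satisfy $\mathcal D\nprec_{\mathcal B\rtimes H}\mathcal B$, and let $x\in f\mathcal Mf$ satisfy $\mathcal D x\subseteq\sum_i x_i\mathcal D$. We claim $x\in\mathcal B\rtimes H$. To prove it, write $\mathcal M$ as an $(\mathcal B\rtimes H)$-bimodule, $L^2(\mathcal M)=\bigoplus_{HgH}L^2(\mathcal B\, u_{HgH})$. For $g\notin H$, malnormality gives $H\cap gHg^{-1}=\{1\}$. It follows that the bimodule $L^2(\mathcal B\,u_{HgH})$ is contained in a multiple of $L^2(\mathcal B\rtimes H)\otimes_{\mathcal B}L^2(\mathcal B\rtimes H)$, i.e.\ it is ``relatively coarse over $\mathcal B$.'' Since $\mathcal D\nprec\mathcal B$, Theorem \ref{corner}(2) provides unitaries $w_n\in\mathcal D$ with $\|E_{\mathcal B}(aw_nb)\|_2\to0$. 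A standard computation then shows that the projection of $x$ onto $\overline{\bigoplus_{g\notin H}}$ is a vector $\xi$ for which the finitely generated right $\mathcal D$-module $\overline{\mathcal D\xi\mathcal D}$ cannot exist unless $\xi=0$. Concretely, one bounds $\|P(\xi)\|$ by $\sum_i\|E_{\mathcal B}(\cdots w_n\cdots)\|_2$ via a Fourier expansion over the double cosets, in the manner of \cite[Theorem 3.1]{Po03}/\cite[Theorem 1.2.1]{IPP05}. Hence one-sided quasi-normalizers, and in particular $vv^*$ and the relevant parts of $v\,\mathcal{QN}(\mathcal Q)v^*$, lie in $\mathcal B\rtimes H$. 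I expect this step to be the main obstacle, since it requires controlling one-sided quasi-normalizers by approximating them with finite sums of double-coset components.

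Finally we transport the conclusion back through $v$. Take $y\in\mathcal{QN}_{p\mathcal Mp}(\mathcal Q)$ and form $vyv^*$, after cutting by $q$. This element one-sided quasi-normalizes $\theta(q\mathcal Qq)$ up to the relative commutant, so by the previous step $v\,q\,\mathcal{QN}''(\mathcal Q)\,q\,v^*\subseteq\mathcal B\rtimes H$. Next, write $v^*v=er$ with $e\in\mathcal Q$ a projection and $r\in\mathcal Q'\cap p\mathcal Mp$; this is possible after passing to a subprojection via the central decomposition of $v^*v\in(q\mathcal Qq)'\cap q\mathcal Mq$. We then extend the partial isometry $v$ to a unitary $u\in\mathcal M$. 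Doing so requires that $v^*v$ and $vv^*$ be equivalent complements in $\mathcal M$, which we arrange by shrinking $e$ if $\mathcal M$ is not a factor. This yields \eqref{cont2}. The final assertion follows from Theorem \ref{corner}, since any unital $\mathcal P\subseteq\mathcal{QN}''(\mathcal Q)$ has the corner $er\mathcal P er$ conjugated into $\mathcal B\rtimes H$.
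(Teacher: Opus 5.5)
Your route is the paper's: intertwining data from Theorem \ref{corner}, the upgrade to $\theta(q\mathcal Qq)\nprec_{\mathcal B\rtimes H}\mathcal B$, Popa's malnormal control of one-sided quasi-normalizers inside $\mathcal B\rtimes H$, and transport back through $v$. For the control step the paper cites \cite{Po03} and \cite[Appendix A]{Bo14}, while you sketch the relative-coarseness argument. Strictly speaking, the middle $\mathcal B$-action on $L^2(\mathcal B u_{HgH})$ is twisted by $g$, but the argument only uses the resulting estimate $\|E_{\mathcal B\rtimes H}(aw_nb)\|_2\to 0$ for $a,b\in\mathcal M\ominus(\mathcal B\rtimes H)$.

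The content of \eqref{cont2} is then $v\,\mathcal{QN}''_{p\mathcal Mp}(\mathcal Q)\,v^*\subseteq\mathcal B\rtimes H$ with $v=u\,er$. Your splitting $v^*v=er$ with $r\in\mathcal Q'\cap p\mathcal Mp$ is right. It is in fact more careful than the paper, whose $r=v^*v$ only lies in $(e\mathcal Qe)'\cap e\mathcal Me$. It follows at once from $(q\mathcal Qq)'\cap q\mathcal Mq=(\mathcal Q'\cap p\mathcal Mp)q$, with no passage to subprojections. Nor is any shrinking needed to extend $v$ to a unitary: $\mathcal M$ is finite, so $v^*v\sim vv^*$ already gives $1-v^*v\sim 1-vv^*$.

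Two steps need repair.

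First, your justification of the last assertion is incorrect. The projection $er$ need not lie in $\mathcal P$, so $er\mathcal Per$ is not a corner of $\mathcal P$, and $x\mapsto u\,er\,x\,er\,u^*$ is not multiplicative on $\mathcal P$. Apply Theorem \ref{corner}(1) instead to $\mathcal{QN}''_{p\mathcal Mp}(\mathcal Q)$, which does contain $er$. The map $x\mapsto vxv^*$ is a $*$-homomorphism from $er\,\mathcal{QN}''_{p\mathcal Mp}(\mathcal Q)\,er$ into $\mathcal B\rtimes H$ with $(vxv^*)v=vx$, so $\mathcal{QN}''_{p\mathcal Mp}(\mathcal Q)\prec_{\mathcal M}\mathcal B\rtimes H$. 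Since $\mathscr U(\mathcal P)\subseteq\mathscr U(\mathcal{QN}''_{p\mathcal Mp}(\mathcal Q))$ for unital $\mathcal P$, condition (3) of Theorem \ref{corner} then passes to $\mathcal P$. The paper cites \cite[Lemma 3.4]{Va08} at this point.

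Second, ``after cutting by $q$'' hides a real issue. The direct check that $vyv^*$ quasi-normalizes $\theta(q\mathcal Qq)$ for $y\in\mathcal{QN}_{p\mathcal Mp}(\mathcal Q)$ uses $\mathcal Qq\subseteq\sum_{k=1}^N c_k\,q\mathcal Qq$ for finitely many $c_k$. This fails in general, e.g.\ for $\mathcal Q=\bigoplus_n M_n(\mathbb C)$ with $q$ a sum of minimal projections. The paper handles this with the compression formula $q\,\mathcal{QN}_{p\mathcal Mp}(\mathcal Q)''q=\mathcal{QN}_{q\mathcal Mq}(q\mathcal Qq)''$ of \cite{Po03,FGS10}. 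Alternatively, first replace $q,v$ by $qz_0,vz_0$, where $z_0\in\mathscr Z(\mathcal Q)$ is a projection on which the center-valued trace of $q$ is bounded below and $vz_0\neq 0$.

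Finally, passing from elements to the generated von Neumann algebra uses that $v\,\mathcal{QN}_{p\mathcal Mp}(\mathcal Q)\,v^*$ is a $*$-algebra. This holds because $v^*v=er\in\mathcal{QN}_{p\mathcal Mp}(\mathcal Q)$. With these repairs the argument is complete.
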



    \begin{proof}
        Since $\mathcal Q\prec_{\mathcal M} \mathcal B \rtimes H$, there exist nonzero projections $e\in \mathcal Q, f\in \sB\rtimes H$, a partial isometry $v\in \mathcal M$ and a unital $\ast$-homomorphisms $\theta: e \mathcal Q e \rightarrow \mathcal \theta(e\sQ e)=:\sR\subseteq f(\mathcal B\rtimes H )f$ such that 
\begin{equation}\label{intrel1}
  \theta(x)v=vx, \text{ for all } x\in e\sQ e.
\end{equation}

Set $s\coloneqq vv^*\in \sR'\cap f \sM f$ and $  r\coloneqq v^*v\in e\sQ e'\cap e\mathcal M e$. Since $\sQ \nprec_\sM \sB$, transitivity of intertwining \cite{IPP05} yields $\sR \nprec_{\sB \rtimes H}\sB$. 
 Hence by \cite{Po03} (see also \cite[Appendix A]{Bo14}) we have \begin{equation}\label{cont1}\mathcal {QN}''_{f \mathcal M f} (\sQ) \subseteq \mathcal B\rtimes H.\end{equation} Let $u\in $ be unitary such that $v= u r$. Relation \eqref{intrel1} implies that $u e\sQ e u^*= \sR s$. Hence using \eqref{cont1} and the compression formulae for quasinormalizing algebras   from \cite{Po03,FGS10}, we obtain \begin{equation*} \begin{split} &  \mathcal B \rtimes H \supseteq s \mathcal {QN}''_{f\mathcal M f}(\sR)s = \mathcal{ QN}''_{s \mathcal M s}(\sR s)= \mathcal {QN}''_{u r \mathcal M r u^* }(ue\sQ e r u^*)= u er\mathcal {QN}''_{  p \mathcal M p}(\sQ) er u^*,
 \end{split}\end{equation*}
which proves \eqref{cont2}.

Finally, \eqref{cont2} immediately implies that $\mathcal {QN}''_{p\sM p}(\sQ)\prec_\sM \sB\rtimes H$.  The conclusion now follows from \cite[Lemma 3.4]{Va08}, which shows that every unital von Neumann subalgebra $\mathcal P\subseteq\mathcal{QN}''_{p\mathcal Mp}(\mathcal Q)$ satisfies $\mathcal P\prec_{\mathcal M}\mathcal B\rtimes H$.
\end{proof}

\begin{thm}\label{uniquecartan}
   For every $i=1,2$, let $G_i$ be a finitely generated group that is hyperbolic relative to an icc non-amenable subgroup $H_i<G_i$. Assume that $H_i =H_i^1\times H^i_2$ where $H_j^i$ are infinite. Assume that $G_i \curvearrowright Z_i$ is a free ergodic pmp action on a standard probability space and denote by $\mathcal M_i = L^\infty(Z_i)\rtimes G_i$. 
   
   If $p\in L^{\infty}(Z_1)$ is any projection so that $p \sM_1 p= \sM_2$ then one can find a unitary $u\in \sM_2$ such that $u L^\infty(Z_1)p u  = L^\infty (Z_2)$. In particular, we have  $G_1 \cong_{\rm ME} G_2$. \end{thm}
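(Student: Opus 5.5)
The plan is to identify $p\mathcal M_1p=\mathcal M_2=:\mathcal M$ and write $\mathcal A_1\coloneqq L^\infty(Z_1)p$ and $\mathcal A_2\coloneqq L^\infty(Z_2)$. Both are Cartan subalgebras of $\mathcal M$. By Popa's conjugacy criterion for Cartan subalgebras \cite{Po03}, it suffices to prove $\mathcal A_1\prec_{\mathcal M}\mathcal A_2$; the unitary $u$ then exists. The measure equivalence $G_1\cong_{\rm ME}G_2$ then follows from the standard dictionary between unitarily conjugate Cartan subalgebras and stable orbit equivalence (Singer/Furman). So I argue by contradiction and assume $\mathcal A_1\nprec_{\mathcal M}\mathcal A_2$, working throughout inside $\mathcal M=L^\infty(Z_2)\rtimes G_2$. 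The ambient action is on an amenable algebra, so Theorem~\ref{commut1}, Theorem~\ref{commut2} and Theorem~\ref{intmalnormal} are available. Since $G_2$ is finitely generated, non-elementary and hyperbolic relative to the exact subgroup $H_2$, it is bi-exact relative to $H_2$ by Theorem~\ref{thm: main}.

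\emph{Step 1: locate $L^\infty(Z_1)\rtimes H_1$ inside $\mathcal M$.} Write $H_1=H^1_1\times H^1_2$. Since $H_1$ is non-amenable, after possibly swapping factors $L(H^1_2)$ is non-amenable. Cutting by a suitable projection, I arrange that $\mathcal A\coloneqq L(H^1_1)p$ is diffuse and that $\mathcal A'\cap p\mathcal M_1p\supseteq L(H^1_2)p$ has no amenable direct summand. Theorem~\ref{commut1} then yields a corner of $\overline{\mathcal{QN}}^1_{\mathcal M}(\mathcal{QN}(\mathcal A)'')$ that is unitarily conjugate into $L^\infty(Z_2)\rtimes H_2$. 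This algebra contains $L(H_1)p$, since $H^1_1\lhd H_1$. Hence $L(H_1)p\prec_{\mathcal M}L^\infty(Z_2)\rtimes H_2$. Since $L(H_1)$ is non-amenable, it cannot intertwine into the abelian algebra $L^\infty(Z_2)$, so $L(H_1)p\nprec_{\mathcal M}L^\infty(Z_2)$.

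\emph{Step 2: propagate via malnormality.} Theorem~\ref{intmalnormal} applies with $\mathcal Q=L(H_1)p$ and $H=H_2$. Here I use that an icc peripheral subgroup in a relatively hyperbolic group can be taken malnormal, after upgrading the almost malnormality of Proposition~\ref{Prop:maln}. This gives that every unital subalgebra of $\mathcal{QN}''_{\mathcal M}(L(H_1)p)$ intertwines into $L^\infty(Z_2)\rtimes H_2$. I would then enlarge this to an algebra large enough to contradict Lemma~\ref{intsubgroups}. The candidate argument is to run the symmetric argument with the roles of the two decompositions exchanged, using the Cartan normalizer $\mathcal N_{\mathcal M}(\mathcal A_1)''=\mathcal M$ and the standing assumption $\mathcal A_1\nprec\mathcal A_2$. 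The aim is to show that the relative bi-exactness dichotomy forces $\mathcal M\prec_{\mathcal M}L^\infty(Z_2)\rtimes H_2$, and in particular $L(G_2)\prec L^\infty(Z_2)\rtimes H_2$. Lemma~\ref{intsubgroups} then gives $[G_2:G_2\cap gH_2g^{-1}]<\infty$, which is impossible because $H_2$ is a proper, infinite-index peripheral subgroup of a non-elementary group.

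\emph{Main obstacle.} The hard step is the dichotomy in Step 2 for the amenable Cartan algebra $\mathcal A_1$: showing that $\mathcal A_1\nprec\mathcal A_2$ forces its normalizer to intertwine into $L^\infty(Z_2)\rtimes H_2$. This is exactly where one would normally invoke Popa--Vaes weak amenability, which is unavailable here. Theorem~\ref{commut2} applied directly to $\mathcal A_1$ is useless, because $\mathcal A_1'\cap\mathcal M=\mathcal A_1$ is amenable.

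What I would try first is to replace weak amenability by the commuting-subalgebra mechanism from Step 1. Conjugate $\mathcal A_1$ together with $L(H^1_1)$ and $L(H^1_2)$ via Theorem~\ref{commut1}. Then use the asymptotic freeness of Corollary~\ref{nontrivial} to show that any $u_g\in\mathcal M_1$ with $g\in G_1\setminus H_1$ quasi-normalizing these corners must also lie in the conjugated copy of $L^\infty(Z_2)\rtimes H_2$; this follows the strategy of \cite{CDS23,AMAKCK}. Combining this with $\mathcal A_1\nprec\mathcal A_2$ should give $\mathcal M_1\prec L^\infty(Z_2)\rtimes H_2$, which yields the contradiction above.
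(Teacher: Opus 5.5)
There is a genuine gap, and it is exactly the step you label the main obstacle. You never show that the Cartan subalgebra $\mathcal A_1$ (or a corner $\mathcal A_1z$) intertwines into the peripheral crossed product $L^\infty(Z_2)\rtimes H_2$, and this is the heart of the theorem.

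Once that intertwining is known, the step you call hard is routine. Theorem~\ref{intmalnormal} applied to $\mathcal Q=\mathcal A_1z$, using $\mathcal A_1z\nprec\mathcal A_2$, puts the normalizer algebra, i.e.\ all of $z\mathcal Mz$, into $L^\infty(Z_2)\rtimes H_2$, and Lemma~\ref{intsubgroups} gives the index contradiction. That theorem only needs almost malnormality, which is how the paper uses it. Your claim that an icc peripheral subgroup can be taken malnormal is unjustified and false in general: a finite subgroup of $H_2$ can be centralized by an element outside $H_2$.

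The route you sketch for the intertwining does not work, for two reasons.
\begin{itemize}
\item $\mathcal A_1$ does not commute with $L(H^1_1)$ or $L(H^1_2)$. In general it is also not contained in the iterated quasi-normalizer produced by Theorem~\ref{commut1}; for a mixing action, no nonscalar element of $L^\infty(Z_1)$ even quasi-normalizes $L(H_1)$. So relative solidity says nothing about $\mathcal A_1$ directly.
\item The unitaries $u_g$, $g\in G_1\setminus H_1$, are not controlled by anything you have established. By almost malnormality they do not quasi-normalize $L(H_1)$, so there is no mechanism to push them into the conjugated copy of $L^\infty(Z_2)\rtimes H_2$.
\end{itemize}

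The missing idea is what the paper does, in the mirror orientation: it proves $L^\infty(Z_2)z\prec L^\infty(Z_1)\rtimes H_1$ via $L(H_2)$. The key is to use the unitaries $u_h$, $h\in H_1$, which normalize the abelian algebra $L^\infty(Z_1)$. The steps are as follows.
\begin{itemize}
\item From your Step 1 and Theorem~\ref{intmalnormal} you get a genuine conjugation $ueL(H_1)ezu^*\subseteq L^\infty(Z_2)\rtimes H_2$, with $z$ a projection in $L(H_1)'\cap\mathcal M_1\subseteq L^\infty(Z_1)$.
\item Tiling $e$ inside the II$_1$ factor $L(H_1)$ upgrades this to uniform concentration. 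For each $\varepsilon>0$ there is a finite $F$ with $\|x-\mathscr P_{FH_2F}(x)\|_2\le\varepsilon$ for all $x$ in the unit ball of $L(H_1)z$.
\item Since $a$ and $u_hau_h^*$ commute for $a\in\mathscr U(L^\infty(Z_1))$, one has $z=(az)(u_hz)(az)(u_h^*z)(a^*z)(u_hz)(a^*z)(u_h^*z)$.
\item Suppose $L^\infty(Z_1)z\nprec L^\infty(Z_2)\rtimes H_2$. Choose $a$ with $az$ almost supported off finitely many sets $EH_2E'$. The factor $L(H_1)z$ does not intertwine into $L^\infty(Z_2)$, so you can choose $h$ with $u_hz$ small on $ERE$, where $R\subset H_2$ is the finite set given by Corollary~\ref{nontrivial}.
\item The product above is then almost supported on a set avoiding $1$. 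Its trace is therefore nearly $0$, contradicting $\tau(z)>0$.
\end{itemize}

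Two smaller points. First, $L(H^1_1)p$ is not an algebra, since $p\in L^\infty(Z_1)$ does not commute with $L(H^1_1)$. In your orientation all of this must be done in an amplification of $\mathcal M_2$; the paper's orientation, with $L(H_2)\subseteq p\mathcal M_1p$ and $\mathcal M_1=L^\infty(Z_1)\rtimes G_1$, avoids this. Second, like the paper, you tacitly use that $H_2$ is exact and proper. Theorem~\ref{commut1} and the final index argument need this, but the statement does not list it.
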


\begin{proof}  Set \(\mathcal A=L^\infty(Z_1)\) and \(\mathcal B=L^\infty(Z_2)\). Then \(\mathcal M_1=\mathcal A\rtimes G_1\) and \(\mathcal M_2=\mathcal B\rtimes G_2\). From the hypothesis, we may assume that there is a nonzero projection \(p\in \mathcal A\) such that \(p\mathcal M_1p=\mathcal M_2\). We first prove that 
\begin{equation}
    \mathcal B\prec_{\mathcal M_2} \mathcal A. 
\end{equation}
Since $L(H_1^1)$ and $L(H_1^2)$ are commuting von Neumann subalgebras of $\mathcal M_1$, and at least one of them (say $L(H_1^2)$) has no amenable direct summand, we obtain by Theorem \ref{commut1} that $L(H_1^1)\prec_{\mathcal M_1} \mathcal A\rtimes H_1$. Moreover, since $H_1^2$ is icc, the algebra $L(H_1^2)$ is a II$_1$ factor, and since $\mathcal A$ is abelian we have $L(H_1^2)\nprec_{\mathcal M_1} \mathcal A$. Hence, since $H_2<G_2$ is almost malnormal by Proposition \ref{Prop:maln}, using Theorem \ref{intmalnormal}, we get $\mathcal {QN}''_{\mathcal M_2}(L(H_1^2))\prec \mathcal A\rtimes H_1$, and since $L(H_2)\subseteq \mathcal {QN}''_{\mathcal M_2}(L(H_1^2))$ we obtain $L(H_2)\prec \mathcal A\rtimes H_1$. Thus, applying Theorem \ref{intmalnormal} again and repeating the same argument, we can find a projection $e\in L(H_2)$, $z\in L(H_2)' \cap \mathcal M_2 \subset \mathcal B^{H_2}$, and $u\in \mathscr U(\mathcal M_1)$ so that  

\begin{equation}
   u e L(H_2) e z u^*\subseteq \mathcal A \rtimes H_1.
\end{equation}
   Since $L(H_2)$ is a II$_1$ factor, after shrinking $e$ if necessary, we can assume that $\tau(e)=1/n$. Thus one can find $u_1,...,u_n\in L(H_2)$ with $\sum_i u_ieu_i^*=1$. Thus, for every $x\in L(H_2)z$, we have that $x= (\sum_i u_i e u_i^*)x(\sum_j u_i e u_j^*)= \sum_{i,j} u_i e( u_i^* x u_j) e u_j^*\in \sum_{i,j} u_i u (\mathcal A \rtimes G_1) u^*u_i^*$. Hence, letting $y_i = u_i u$ we get \begin{equation}
    L(H_2)z\subseteq \sum_{i,j} y_i \big(\mathcal A\rtimes H_1\big)y_i^*.
\end{equation} 
Using $\|\cdot\|_2$-approximations of $y_i$ we obtain the following: for every $\varepsilon>0$ there exists a finite set $F\subset (G_1\setminus H_1) \cup\{1\}$ such that \begin{equation}\label{smallsetsH_2}
    \sup_{x\in (L(H_2)z)_1}\|x- \mathscr P_{FH_1F}(x)\|_2\leq \varepsilon.
\end{equation}   
Here, and throughout this proof, for every subset $S\subseteq G_1$ we denote by $\mathscr P_{S}$ the orthogonal projection from $L^2(\mathcal M_1)$ onto $\overline{\text{span}\{au_g \,:\, a\in \mathcal A , g \in S\}}^{\|\cdot\|_2}$.

We next prove the following 

\begin{claim}\label{intcartan}
 We have $\mathcal B \prec_{\mathcal M_1} \mathcal A \rtimes H_1$.   
\end{claim}
\noindent \emph{Proof of Claim \ref{intcartan}}.  Fix $\varepsilon>0$. Using iteratively relation \eqref{smallsetsH_2} for every $i=\overline{1,4}$
pick finite subset $E_i=E_i^{-1}\subset (G_1 \setminus H_1)\cup \{1\}$ such that for all $g\in H_2$ we have 

\begin{equation}\label{smallsetsH_12}\begin{split}
   & \|u_g z -\mathscr P_{E_1 H_1 E_1}(u_g z)\|_2\leq \varepsilon, \text{ and}
   \\ 
   &\|u_g z -\mathscr P_{E_{i+1} H_1 E_{i+1}}(u_g z)\|_2\leq \frac{\varepsilon}{|E_1|^2\cdots |E_i|^2} \text{ for all } i=\overline{1,3}. 
   \end{split}
\end{equation}
Also note that for every $x\in \mathcal M_1$ we have \begin{equation}\label{strongnorm proj}\|\mathscr P_{E_i H_1 E_i}(x)\|_\infty \leq |E_i|^2 \|x\|_\infty.\end{equation}  

Next, since $\mathcal B$ is abelian and $\{u_gz\}_{g\in H_2}$  normalizes $\mathcal B z$, for all $a\in \mathscr U(\mathcal B)$ and all $g\in H_2$ we have $z=(az) (u_g z) (b z) (u_{g^{-1}} z) (a^* z) (u_gz) (b^*z)( u_{g^{-1}}z)$. Using this relation together with \eqref{smallsetsH_12}-\eqref{strongnorm proj} we see that 
\begin{equation}\label{traceineq1}
\begin{split}
    \tau(z)&=\tau((az) (u_g z) (a z) (u_{g^{-1}} z) (a^* z) (u_gz) (a^*z)( u_{g^{-1}}z))\\ 
    &\leq \tau((az) \mathscr P_{E_1 H_1E_1}(u_g z) (a z) (u_{g^{-1}} z) (a^* z) (u_gz) (a^*z)( u_{g^{-1}}z))+\|u_g z -\mathscr P_{E_1 H_1E_1}(u_gz)\|_2\\
    & \overset{ \eqref{smallsetsH_12} }{\leq} \tau((az) \mathscr P_{E_1 H_1E_1}(u_g z) (a z) (u_{g^{-1}} z) (a^* z) (u_gz) (a^*z)( u_{g^{-1}}z))+\varepsilon\\
    &\leq \tau((az) \mathscr P_{E_1 H_1E_1}(u_g z) (a z) \mathscr P_{E_2 H_1E_2}(u_{g^{-1}} z) (a^* z) (u_gz) (a^*z)( u_{g^{-1}}z))+\\& \quad +\|\mathscr P_{E_1H_1E_1}(u_g z)\|_\infty \|u_{g^{-1}z}-\mathscr P_{E_2 H_1E_2}(u_{g^{-1}}z)\|_2  + \varepsilon \\
    &\overset{\eqref{strongnorm proj}}{\leq} \tau((az) \mathscr P_{E_1 H_1E_1}(u_g z) (a z) \mathscr P_{E_2 H_1E_2}(u_{g^{-1}} z) (a^* z) (u_gz) (a^*z)( u_{g^{-1}}z))+\\& \quad +|E_1|^2 \|u_{g^{-1}z}-\mathscr P_{E_2 H_1E_2}(u_{g^{-1}}z)\|_2  + \varepsilon\\
    & \overset{\eqref{smallsetsH_12}}{\leq} \tau((az) \mathscr P_{E_1 H_1E_1}(u_g z) (a z) \mathscr P_{E_2 H_1E_2}(u_{g^{-1}} z) (a^* z) (u_gz) (a^*z)( u_{g^{-1}}z))+2 \varepsilon\\
    & \ldots\\
    &\leq\tau((az) \mathscr P_{E_1 H_1E_1}(u_g z) (a z) \mathscr P_{E_2 H_1E_2}(u_{g^{-1}} z)(a^* z) \mathscr P_{E_3H_1E_3}(u_gz) (a^*z)\mathscr P_{E_4H_1E_4}( u_{g^{-1}}z))+4 \varepsilon.
    \end{split}
\end{equation}

Assume for contradiction that $\mathcal B z \nprec \mathcal A \rtimes H_1$. By Popa's intertwining technique and basic $\|\cdot \|_2$-approximations this implies that for every $\varepsilon>0$ there is $a\in \mathscr U(\mathcal B)$ such that \begin{equation}\label{nonint4}
    \|\mathscr P_{E^{-1}_i H_1 E^{-1}_{i+1}}(az)\|_2\leq \frac{\varepsilon}{ 2(\kappa^2+1)^4} \text{ for all }i=\overline{1,4}.
\end{equation} 
Above, we have denoted by $\kappa \coloneqq  \max^4_{i=1} |E_i|$. Also in these formulae we make the convention that indices $5=1$.

Using Kaplansky density theorem there is $b\in \mathcal M_1$ supported on a finite subset $K=K^{-1}\subset G_1$ satisfying $\|b\|_\infty \leq 1$  and $\|az-b\|_2\leq \frac{\varepsilon}{ 2(\kappa^2+1)^4} $. Letting $b_i= b-\mathscr P_{ E_iH_1 E_{i+1}} (b)$ we see that \begin{equation}\label{sup+norm}\begin{split} &{\rm supp}(b_i)=:K_i \subseteq K \setminus E_iH_1 E_{i+1} \\&\|b_i\|_\infty\leq \kappa^2+1.
\end{split}
\end{equation} Moreover, using \eqref{nonint4} we also have 
\begin{equation}\label{approx1}\begin{split}
    \|az-b_i\|_2\leq \frac{ \varepsilon}{ (\kappa^2+1)^4}. 
\end{split}\end{equation}

Using the prior inequalities \eqref{sup+norm}, \eqref{approx1}, and \eqref{strongnorm proj} we see that

\begin{equation}\label{traceineq2}
 \begin{split}
    &\tau((az) \mathscr P_{E_1 H_1E_1}(u_g z) (b z) \mathscr P_{E_2 H_1E_2}(u_{g^{-1}} z)(a^* z) \mathscr P_{E_3H_1E_3}(u_gz) (b^*z)\mathscr P_{E_4H_1E_4}( u_{g^{-1}}z))\\
    &\leq \tau(b_1 \mathscr P_{E_1 H_1E_1}(u_g z) (a z) \mathscr P_{E_2 H_1E_2}(u_{g^{-1}} z)(a^* z) \mathscr P_{E_3H_1E_3}(u_gz) (a^*z)\mathscr P_{E_4H_1E_4}( u_{g^{-1}}z))+\\
    &\quad +\|az-b_1\|_2 \| \mathscr P_{E_1H_1E_1}(u_gz)\|_\infty\cdots  \| \mathscr P_{E_4H_1E_4}(u_gz)\|_\infty \\
     &\overset{\eqref{strongnorm proj}}{\leq} \tau(b_1 \mathscr P_{E_1 H_1E_1}(u_g z) (a z) \mathscr P_{E_2 H_1E_2}(u_{g^{-1}} z)(a^* z) \mathscr P_{E_3H_1E_3}(u_gz) (a^*z)\mathscr P_{E_4H_1E_4}( u_{g^{-1}}z))+\\
    &\quad +\|az-b_1\|_2 |E_1|^2\cdots |E_4|^2\\
     &\overset{\eqref{approx1}}{\leq} \tau(b_1 \mathscr P_{E_1 H_1E_1}(u_g z) (a z) \mathscr P_{E_2 H_1E_2}(u_{g^{-1}} z)(a^* z) \mathscr P_{E_3H_1E_3}(u_gz) (a^*z)\mathscr P_{E_4H_1E_4}( u_{g^{-1}}z))+\varepsilon\\
    &\leq \tau(b_1 \mathscr P_{E_1 H_1E_1}(u_g z) b_2 \mathscr P_{E_2 H_1E_2}(u_{g^{-1}} z)(a^* z) \mathscr P_{E_3H_1E_3}(u_gz) (a^*z)\mathscr P_{E_4H_1E_4}( u_{g^{-1}}z))+\varepsilon +\\
    &\quad \|az-b_2\|_2\|b_1\|_\infty \|\mathscr P_{E_2 H_1 E_2}(u_{g}z)\|_\infty \cdots \|\mathscr P_{E_4 H_1E_4}(u_gz)\|_\infty\\
&\overset{\eqref{sup+norm},\eqref{strongnorm proj}}{\leq} \tau(b_1 \mathscr P_{E_1 H_1E_1}(u_g z) b_2 \mathscr P_{E_2 H_1E_2}(u_{g^{-1}} z)(a^* z) \mathscr P_{E_3H_1E_3}(u_gz) (a^*z)\mathscr P_{E_4H_1E_4}( u_{g^{-1}}z))+\varepsilon +\\
    &\quad \|az-b_2\|_2 (\kappa^2+1) |E_2|^2 |E_3|^2 |E_4|^2\\
    &\overset{\eqref{approx1}}{\leq} \tau(b_1 \mathscr P_{E_1 H_1E_1}(u_g z) b_2 \mathscr P_{E_2 H_1E_2}(u_{g^{-1}} z)(a^* z) \mathscr P_{E_3H_1E_3}(u_gz) (a^*z)\mathscr P_{E_4H_1E_4}( u_{g^{-1}}z))+2\varepsilon \\
& \ldots\\
&\leq \tau(b_1 \mathscr P_{E_1 H_1E_1}(u_g z) b_2 \mathscr P_{E_2 H_1E_2}(u_{g^{-1}} z)b_1^* \mathscr P_{E_3H_1E_3}(u_gz) b_2^*\mathscr P_{E_4H_1E_4}( u_{g^{-1}}z))+4\varepsilon. 
    \end{split}   
\end{equation} 
Combining inequalities \eqref{traceineq1} and \eqref{traceineq2} imply that for all $g\in H_2$ we have that 

\begin{equation}\label{traceineq}
  \tau(z)-8\varepsilon \leq \tau(b_1 \mathscr P_{E_1 H_1E_1}(u_g z) b_2 \mathscr P_{E_2 H_1E_2}(u_{g^{-1}} z)b_1^* \mathscr P_{E_3H_1E_3}(u_gz) b_2^*\mathscr P_{E_4H_1E_4}( u_{g^{-1}}z)).  
\end{equation}

By construction, we have $F_i\coloneqq E_iK_i E_i\subset G_1\setminus H_1$. If we let $F=\bigcup^4_{i=1} F_i\subset G_1\setminus H_1$ and use Corollary \ref{nontrivial} for the finite subset $F$ and $\ell=8$ one can find a finite subset $R=R^{-1}\subset H_1$ such that \begin{equation}\label{noidentity}1\notin F (H\setminus R) F (H\setminus R) F (H\setminus R) F (H \setminus R). \end{equation} 

Finally, using \eqref{traceineq} in combination with  \eqref{strongnorm proj}, \eqref{approx1} and \eqref{sup+norm}, we see that for all $g\in H_2$ we have  
\begin{equation*}
    \begin{split}
     & \tau(z)-8\varepsilon \leq \tau(b_1 \mathscr P_{E_1 H_1E_1}(u_g z) b_2 \mathscr P_{E_2 H_1E_2}(u_{g^{-1}} z)b_1^* \mathscr P_{E_3H_1E_3}(u_gz) b_2^*\mathscr P_{E_4H_1E_4}( u_{g^{-1}}z)) \\
     & \leq \tau( b_1 \mathscr P_{E_1(H_1\setminus R)E_1}(u_gz)b_2 \mathscr P_{E_2 H_1E_2}(u_{g^{-1}} z)b_1^* \mathscr P_{E_3H_1E_3}(u_gz) b_2^*\mathscr P_{E_4H_1E_4}( u_{g^{-1}}z)+\\
     & \quad + \|\mathscr P_{E_1 RE_1}(u_g z)\|_2 \|b_1\|_\infty \|b_2\|_\infty\|b_3\|_\infty \|b_4\|_\infty \|\mathscr P_{E_2H_1E_2}(u_{g^{-1}}z)\|_\infty \|\mathscr P_{E_3H_1E_3}(u_gz)\|_\infty \|\mathscr P_{E_4 H_1 E_4}(u_{g^{-1}}z)\|_\infty\\
&\overset{\eqref{sup+norm},\eqref{strongnorm proj}}{\leq} \tau( b_1 \mathscr P_{E_1(H_1\setminus R)E_1}(u_gz)b_2 \mathscr P_{E_2 H_1E_2}(u_{g^{-1}} z)b_1^* \mathscr P_{E_3H_1E_3}(u_gz) b_2^*\mathscr P_{E_4H_1E_4}( u_{g^{-1}}z)+\\
     & \quad + \|\mathscr P_{E_1 RE_1}(u_g z)\|_2 (\kappa^2+1)^4 \kappa^6\\
     & \leq \tau( b_1 \mathscr P_{E_1(H_1\setminus R)E_1}(u_gz)b_2 \mathscr P_{E_2 (H_1\setminus R)E_2}(u_{g^{-1}} z)b_1^* \mathscr P_{E_3H_1E_3}(u_gz) b_2^*\mathscr P_{E_4H_1E_4}( u_{g^{-1}}z)+\|\mathscr P_{E_1 RE_1}(u_g z)\|_2 (\kappa^2+1)^4 \kappa^6\\
     &\quad +   \|\mathscr P_{E_2 H_1E_2}(u_{g^{-1}}z)\|_2  \|b_1\|_\infty \|b_2\|_\infty\|b_3\|_\infty \|b_4\|_\infty \|\mathscr P_{E_1 (H_1\setminus R)E_1}(u_gz)\|_\infty \|\mathscr P_{E_3H_1E_3}(u_gz)\|_\infty \|\mathscr P_{E_4 H_1 E_4}(u_{g^{-1}}z)\|_\infty\\ 
&\overset{\eqref{sup+norm},\eqref{strongnorm proj}}{\leq} \tau( b_1 \mathscr P_{E_1(H_1\setminus R)E_1}(u_gz)b_2 \mathscr P_{E_2 (H_1\setminus R)E_2}(u_{g^{-1}} z)b_1^* \mathscr P_{E_3H_1E_3}(u_gz) b_2^*\mathscr P_{E_4H_1E_4}( u_{g^{-1}}z)+\\
&\quad \|\mathscr P_{E_1 RE_1}(u_g z)\|_2 (\kappa^2+1)^4 \kappa^6 + \|\mathscr P_{E_2RE_2}(u_{g^{-1}}z)\|_2
(1+|R|)(\kappa^2+1)^4 \kappa^6\\
& \ldots \\
&\leq  \tau( b_1 \mathscr P_{E_1(H_1\setminus R)E_1}(u_gz)b_2 \mathscr P_{E_2 (H_1\setminus R)E_2}(u_{g^{-1}} z)b_1^* \mathscr P_{E_3(H_1\setminus R)E_3}(u_gz) b_2^*\mathscr P_{E_4(H_1\setminus R) E_4}( u_{g^{-1}}z)+\\
&\quad +(\kappa^2+1)^4 \kappa^4 (\sum^4_{i=1}\|\mathscr P_{E_i RE_i}(u_g z)\|_2 (1+|R|)^{i-1}).
\end{split}
\end{equation*}
 Now observe that the element inside the trace above is supported on $K_1 E_1 (H_1\setminus R) F (H_1\setminus R) F (H_1\setminus R) F (H_1\setminus R)E_4$. However, by \eqref{noidentity} this set cannot contain the identity of $G_1$. Therefore the first term in the sum above vanishes. In conclusion, for every  $g\in H_2$ we have  \begin{equation}\label{traceineq3}
  \tau(z)-8\varepsilon \leq (\kappa^2+1)^4 \kappa^4\left (\sum^4_{i=1}\|\mathscr P_{E_i RE_i}(u_g z)\|_2 (1+|R|)^{i-1})\right ).   
 \end{equation} 
Finally, since $L(H_2)$ has no amenable direct summand and $\mathcal A$ is abelian then $L(H_2)z\nprec \mathcal A$ and hence by Popa's intertwining technique there is a sequence $(g_n)_n \subset H_2$ such that $\|\mathscr P_{E_iRE_i}(u_{g_n} z)\|_2\rightarrow 0$ as $n\rightarrow \infty$ for all $i=\overline{1,4}$. Letting $g=g_n$ in \eqref{traceineq3} and taking the limit over $n$ we get $\tau(z)\leq 8\varepsilon$ which is a contradiction for $\varepsilon> 0$ sufficiently small. Therefore, we conclude that $\mathcal B z\prec_{\mathcal M_1} \mathcal A\rtimes H_2$. $\hfill\blacksquare$ 
\vskip 0.06in
It remains to show that $\mathcal Bz\prec_{\mathcal M_1} \mathcal A$. Assume for contradiction that this does not hold. Then, by Claim \ref{intcartan} and Theorem \ref{intmalnormal}, the von Neumann algebra $\mathcal N_{z\mathcal M_2 z}(\mathcal Bz)''$ intertwines into $\mathcal A\rtimes H_1$. This implies that $\mathcal M_1 \prec_{\mathcal M_1} \mathcal A \rtimes H_1$, which in turn entails that $[G_1:H_1]<\infty$, a contradiction. Thus $\mathcal B \prec_{\mathcal M_1} \mathcal A$, which further implies that $\mathcal B\prec_{\mathcal M_1} \mathcal A p$ and hence $\mathcal B \prec_{\mathcal M_2} \mathcal A p$. Since $\mathcal B$ and $\mathcal A p$ are Cartan subalgebras in the II$_1$ factor $\mathcal M_2$, Popa's unitary conjugacy criterion \cite[Theorems A.1 and 6.2]{Po01} yields a unitary $u\in \mathcal M_2$ such that $u\mathcal B u^*=\mathcal A p$, and the desired conclusion follows.\end{proof}

\subsection{An ME-rigidity result for relatively biexact groups}

We now establish an ME-rigidity result for relatively biexact groups (Theorem \ref{thm:MEfactorrigid}). 

The argument follows closely the methods developed by Sako in \cite[Sections 7--8]{Sak09}, in particular the maximal partial embedding argument of \cite[Lemma 45]{Sak09} and the factorization argument for amalgamated free products in \cite[Proposition 49 and Theorem 50]{Sak09}. For the reader's convenience, we include the relevant arguments, adapted to the present setting.

\vskip 0.05in

We start by briefly recalling the the measurable embedding terminology and notation that will be used
throughout this subsection; see \cite[Definitions 2, 9 and 12 and
Subsection~3.3]{Sak09}.
\vskip 0.05in
Let $G$ and $\Gamma$ be countable groups. A \emph{measurable embedding}
of $G$ into $\Gamma$ is a standard measure space $(\Sigma,\nu)$
equipped with a measure-preserving $G\times\Gamma$-action for which there
exist a $\Gamma$-fundamental domain $X\subset\Sigma$ of finite measure
and a $G$-fundamental domain $Y\subset\Sigma$, not necessarily of finite
measure. We denote the existence of such an embedding by
\[
G\preceq_{\mathrm{ME}}\Gamma.
\]
The measurable embedding $\Sigma$ is called \emph{ergodic} if the
$G\times\Gamma$-action on $\Sigma$ is ergodic. If $Y$ also has finite
measure, then $\Sigma$ is an ME coupling of $G$ and $\Gamma$;
see \cite[Definition~9]{Sak09}.

More generally, let $H<G$ and $\Lambda<\Gamma$ be subgroups, and let
$\Sigma$ be a measurable embedding of $G$ into $\Gamma$. We say that
$H$ \emph{measurably embeds} into $\Lambda$ in $\Sigma$, and write
\[
H\preceq_{\Sigma}\Lambda,
\]
if there exists a non-null measurable subset $\Omega\subset\Sigma$ which
is invariant under the $H\times\Lambda$-action and for which a
$\Lambda$-fundamental domain has finite measure. Following \cite[Definition~2]{Sak09}, such a set $\Omega$
is called a \emph{partial embedding} of $H$ into $\Lambda$. 

Choose a $\Gamma$-fundamental domain $X\subset\Sigma$. The
\emph{$\Gamma$-support} of $H\preceq_{\Sigma}\Lambda$, denoted by
\[
\operatorname{supp}_{X}^{\Gamma}
   (H\preceq_{\Sigma}\Lambda)\in L^\infty(X),
\]
is the projection corresponding, under the identification $(L^\infty(\Sigma))^\Gamma\simeq L^\infty(X)$, to
\[
\bigvee
\left\{
\gamma\chi_{\Omega}:
\gamma\in\Gamma,\;
\Omega\subset\Sigma
\text{ is a partial embedding of }H\text{ into }\Lambda
\right\}
\in (L^\infty(\Sigma))^\Gamma.
\]
Analogously, after choosing a $G$-fundamental domain $Y\subset\Sigma$,
one defines the $G$-support to be  
$\operatorname{supp}_{Y}^{G}(H\preceq_{\Sigma}\Lambda)$;
see \cite[Definition~12]{Sak09}.
\vskip 0.05in
For further use we also recall the function-valued measure. For a subgroup
$\Lambda<\Gamma$, let $X_\Lambda$ be a $\Lambda$-fundamental domain
for $\Sigma$, and let $\operatorname{Tr}_{\Lambda}$ denote integration
over $X_\Lambda$. If $X$ is a fixed $\Gamma$-fundamental domain, the
natural inclusion
\[
\iota:L^\infty(X)\longrightarrow (L^\infty(\Sigma))^\Lambda,
\qquad
\iota(f)(\gamma x)=f(x)
\quad (x\in X,\ \gamma\in\Gamma),
\]
induces the function-valued measure
\[
E_X^\Lambda:
L^1\bigl((L^\infty(\Sigma))^\Lambda,\operatorname{Tr}_{\Lambda}\bigr)
\longrightarrow L^1(X),
\]
which extends completely additively to the extended positive cones. If
$\{s_\iota\}_{\iota\in I}$ is a set of representatives for the right
cosets $\Lambda\backslash\Gamma$, then
\[
E_X^\Lambda(\varphi)(x)
=
\sum_{\iota\in I}\varphi(s_\iota x),
\qquad
\varphi\in (L^\infty(\Sigma))^\Lambda_+.
\]
Finally, as in \cite[Subsection~3.3]{Sak09}, for a $\Lambda$-invariant measurable subset $\Omega\subset\Sigma$, we
write
\[
E_X^\Lambda(\Omega):=E_X^\Lambda(\chi_\Omega).
\]
\vskip 0.05in
With these preparations at hand we first establish an analogue of the maximal partial embedding argument in \cite[Lemma 45]{Sak09}, with almost malnormality replacing the wreath-product structure used there.

\begin{lemma}\label{lem:maximal-malnormal}
Let $\Gamma$ be a group and let $\Gamma_0<\Gamma$ be an almost malnormal subgroup. Let $\Sigma$ be a measurable embedding of a countable group
$G$ into $\Gamma$. Let $H<G$ be a non-amenable group such that $H\preceq_\Sigma \Gamma_0$.

\noindent Then there exists a maximal partial embedding $\Omega\subset\Sigma$ of $H$ into $\Gamma_0$. Namely, if $\Omega'$ is any other partial embedding of
$H$ into $\Gamma_0$, then $\Omega'\setminus\Omega$ is null.

\noindent Moreover,
\begin{equation}\operatorname{supp}_X^\Gamma(H\preceq_\Sigma\Gamma_0)
=
E_X^{\Gamma_0}(\Omega).
\end{equation}
\end{lemma}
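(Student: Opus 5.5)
The strategy is to adapt Sako's maximal partial embedding argument \cite[Lemma 45]{Sak09}, with almost malnormality of $\Gamma_0<\Gamma$ replacing the wreath-product input. First we reduce to a structural statement about partial embeddings. Let $\Omega_1,\Omega_2$ be partial embeddings of $H$ into $\Gamma_0$ and let $\gamma\in\Gamma$. We want to show that $\Omega_1\cap\gamma\Omega_2$ is null unless $\gamma\in\Gamma_0$.

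The set $\gamma\Omega_2$ is $H\times\gamma\Gamma_0\gamma^{-1}$-invariant. It follows that $\Omega_1\cap\gamma\Omega_2$ is invariant under $H\times(\Gamma_0\cap\gamma\Gamma_0\gamma^{-1})$. If $\gamma\notin\Gamma_0$, almost malnormality makes $\Gamma_0\cap\gamma\Gamma_0\gamma^{-1}$ finite. The set $\Omega_1\cap\gamma\Omega_2$ has a $\Gamma_0$-fundamental domain of finite measure, namely the intersection with one for $\Omega_1$. Since the intersection group is finite, it also has a fundamental domain of finite measure for that finite group. Hence $H$ would admit a partial embedding into a finite group. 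Such an embedding yields an $H$-invariant set of finite measure carrying a free measure-preserving $H$-action with a fundamental domain, because $H$ acts freely, being part of a measurable embedding. Equivalently, $H\preceq_{\rm ME}\{1\}$ up to a finite group. This forces $H$ to be finite, hence amenable, and so contradicts the hypothesis. If the intersection were non-null, restricting to it would give such an embedding. So $\Omega_1\cap\gamma\Omega_2$ is null.

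Next we take unions. The union of two partial embeddings is $H\times\Gamma_0$-invariant. Its $\Gamma_0$-fundamental domain measure is bounded by the sum of the two, so it is again a partial embedding. Therefore the family of partial embeddings is closed under countable unions, provided their $\Gamma_0$-measures $E^{\Gamma_0}_X(\cdot)$, integrated, stay bounded. Here the previous step is key. Translates $\gamma\Omega$ with $\gamma\notin\Gamma_0$ are disjoint from every partial embedding. Hence, for the union $U$ of all partial embeddings, the sets $s_\iota U$ over right-coset representatives $s_\iota\in\Gamma_0\backslash\Gamma$ are essentially disjoint. This gives $E_X^{\Gamma_0}(U)=\sum_\iota\chi_U(s_\iota x)\le 1$ pointwise. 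Consequently $\mathrm{Tr}_{\Gamma_0}(\chi_U)\le\mu(X)<\infty$, and every countable union of partial embeddings remains a partial embedding.

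We then carry out a standard exhaustion. Take the supremum of $\mathrm{Tr}_{\Gamma_0}(\chi_{\Omega})$ over all partial embeddings. This supremum is finite by the bound above. Choose a sequence attaining it and set $\Omega$ to be the union of that sequence. If $\Omega'$ is another partial embedding with $\Omega'\setminus\Omega$ non-null, then $\Omega\cup\Omega'$ strictly increases $\mathrm{Tr}_{\Gamma_0}$, which is a contradiction. This proves maximality. For the support formula, note that $\bigvee_{\gamma,\Omega'}\gamma\chi_{\Omega'}=\bigvee_\gamma\gamma\chi_\Omega$ by maximality. By the disjointness of $\Gamma_0$-inequivalent translates, this supremum equals $\sum_\iota s_\iota^{-1}\chi_\Omega$, up to left/right conventions. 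Viewed as a $\Gamma$-invariant function, it restricts on $X$ to $x\mapsto\sum_\iota\chi_\Omega(s_\iota x)=E^{\Gamma_0}_X(\Omega)(x)$.

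The main obstacle is the first step: showing rigorously that a non-null $H\times F$-invariant set with $F$ finite contradicts non-amenability of $H$. This requires care with fundamental domains, in particular that the $G$-action, and hence the $H$-action, is free with a fundamental domain $Y$. I would argue that such a set $\Omega_0$ has finite measure and is $H$-invariant. Freeness of $H$ on $\Omega_0$ combined with finite measure then contradicts $H$ being infinite, since the $H$-translates of $\Omega_0\cap Y$ are disjoint and of equal measure. Note that only infiniteness of $H$ is actually needed here.
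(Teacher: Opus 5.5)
The gap is in Step 1, at exactly the point you flagged. You need a non-null $H$-invariant set of finite measure inside $A:=\Omega_1\cap\gamma\Omega_2$. You justify $\nu(A)<\infty$ by saying $A$ ``has a $\Gamma_0$-fundamental domain of finite measure, namely the intersection with one for $\Omega_1$''. This is not valid. $A$ is invariant only under the finite group $F=\Gamma_0\cap\gamma\Gamma_0\gamma^{-1}$, not under $\Gamma_0$, so $A\cap D_1$ (with $D_1$ a $\Gamma_0$-fundamental domain of $\Omega_1$) is not a fundamental domain for anything. $A$ typically meets infinitely many translates $\gamma_0D_1$, and $\nu(A)=\sum_{\gamma_0\in\Gamma_0}\nu(A\cap\gamma_0D_1)$ has no a priori reason to converge. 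Concretely, write $\Omega_1=\bigsqcup_\iota\Gamma_0s_\iota X_\iota$ and $\Omega_2=\bigsqcup_\kappa\Gamma_0s_\kappa X'_\kappa$, and let $c(x)=\#\{\delta\in\Gamma:\delta x\in A\}$ on $X$. Then $\nu(A)=\int_X c\,d\nu$, and all you get is $c\le|F|\,E_X^{\Gamma_0}(\Omega_1)\,E_X^{\Gamma_0}(\Omega_2)$, a product of two merely integrable functions. Bounding these by $1$ is what the lemma is trying to prove, so it cannot be used here.

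The repair is a truncation, which is the content of \cite[Lemma 15]{Sak09} invoked in the paper. The bound shows $c<\infty$ a.e. Moreover, $\tilde c(\sigma)=\#\{\delta\in\Gamma:\delta\sigma\in A\}$ is both $\Gamma$- and $H$-invariant, since the actions commute and $A$ is $H$-invariant. So for large $n$, $A\cap\{\tilde c\le n\}$ is non-null, $H$-invariant, and of measure at most $n\,\nu(X)$. Your freeness argument then forces $H$ to be finite. Use $\Omega_0\cap gY$ for some $g\in G$ making it non-null, since $\Omega_0\cap Y$ itself may be null. The paper reaches the same point differently: it localizes to a non-null $W\subset X$ on which the multiplicity of $\Omega$ is a constant $k\ge 2$, so that $(s_2\Omega\cap\Omega)\cap\Gamma W=SW$ with $S$ finite.

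With this fix, your Step 1 is equivalent to the paper's Claim that $E_X^{\Gamma_0}(\Omega)$ is a projection, applied to $\Omega_1\cup\Omega_2$. Your exhaustion by maximizing $\operatorname{Tr}_{\Gamma_0}\le\nu(X)$ is then a valid, slightly cleaner alternative to the paper's support-matching sequence. Your check of the support formula via $\chi_{\Gamma\Omega}=\sum_\iota\chi_{s_\iota^{-1}\Omega}$ is also correct. Two minor points remain. First, the ``union of all partial embeddings'' is not a measurable object, so state Step 2 for countable unions, which is all you use. Second, the essentially disjoint translates are $s_\iota^{-1}U$, not $s_\iota U$.
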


\begin{proof} 
    Let $\{s_\iota\}_{\iota\in I}$ be representatives of the right cosets $\Gamma_0\backslash\Gamma$. Let $\Omega\subset\Sigma$ be an arbitrary partial embedding of $H$ into $\Gamma_0$, and let $X$ be a $\Gamma$-fundamental domain.

Then we have  $\Omega = \bigsqcup_{\iota\in I} \Gamma_0 s_\iota X_\iota$, for soem measurable subsets $X_\iota\subset X$. Hence
\begin{equation*}
    E_X^{\Gamma_0}(\Omega) = \sum_{\iota\in I}\chi_{X_\iota},
\end{equation*}
which is integrable.
\begin{claim}\label{proj}
    $E_X^{\Gamma_0}(\Omega)$ is a projection.
\end{claim} 
\noindent\emph{Proof of Claim \ref{proj}}
Assume by contradiction the essential range of $E_X^{\Gamma_0}(\Omega)$ is not contained
in $\{0,1\}$. Then there exist a non-null measurable subset $W\subset X$ and
distinct representatives
$s_1,\dots,s_k, k\ge2$, such that
\[
\Omega\cap\Gamma W
=
\bigsqcup_{i=1}^k
\Gamma_0s_iW.
\]

Replacing $X$ with $s_1W\sqcup(X\setminus W)$ and the representatives $\{s_i\}$ with
$\{s_is_1^{-1}\}$, we may assume $s_1=1$. The measurable subset $s_2\Omega\cap\Omega$ is $H$-invariant and satisfies
\[
s_2\Omega\cap\Omega
=
\bigcup_{i,j}
(s_2\Gamma_0s_i\cap\Gamma_0s_j)W.
\]
Since $\Gamma_0$ is almost malnormal and $s_2\notin\Gamma_0$, each set $s_2\Gamma_0s_i\cap\Gamma_0s_j$ is either empty or finite. Hence $S\coloneqq 
\bigcup_{i,j}
(s_2\Gamma_0s_i\cap\Gamma_0s_j)$
is a finite subset of $\Gamma$.

Therefore $s_2\Omega\cap\Omega=SW$,
up to null sets. Since we can view $S$ as a finite union of right cosets of the trivial
subgroup,
\[
E_X(s_2\Omega\cap\Omega)|_W
=
|S|\,1_W.
\]
By \cite[Lemma 15]{Sak09}, we obtain $H\preceq_\Sigma\{1\}$, contradicting the non-amenability of $H$.

Hence the essential range of $E_X^{\Gamma_0}(\Omega)$ is contained in
$\{0,1\}$.$\hfill\blacksquare$

\vskip 0.06in

For the rest of the proof, we recycle the same argument as in \cite[Lemma 45]{Sak09}. When $\Omega,\Omega'$ are partial embeddings of $H$ into $\Gamma_0$, the union
$\Omega\cup\Omega'$ is also a partial embedding of $H$ into $\Gamma_0$. By the
above, $E_X^{\Gamma_0}(\Omega\cup\Omega')$ is a projection.

There exists an increasing sequence $\{\Omega_n\}$ of partial embeddings of
$H$ into $\Gamma_0$ such that
\[
\bigvee_nE_X^{\Gamma_0}(\Omega_n)
=
\operatorname{supp}_X^\Gamma(H\preceq_\Sigma\Gamma_0).
\]

Let $\Omega=\bigcup_n\Omega_n$. Applying $E_X^{\Gamma_0}$ gives $E_X^{\Gamma_0}(\Omega)
=
\sup_nE_X^{\Gamma_0}(\Omega_n)
=
\operatorname{supp}_X^\Gamma(H\preceq_\Sigma\Gamma_0)$.
It follows that $\Omega$ is again a partial embedding.

Let $\Omega'$ be another partial embedding. Then
$E_X^{\Gamma_0}(\Omega)
\le
E_X^{\Gamma_0}(\Omega\cup\Omega')
\le
\operatorname{supp}_X^\Gamma(H\preceq_\Sigma\Gamma_0)
=
E_X^{\Gamma_0}(\Omega)$.
Since $E_X^{\Gamma_0}$ is faithful, $\chi_{\Omega\cup\Omega'}
=
\chi_\Omega$,
so $\Omega'\setminus\Omega$ is null.

Altogether, these prove the maximality of $\Omega$ and the formula for its
$\Gamma$-support.
\end{proof}

\vskip 0.05in
We next prove the analogue of \cite[Proposition~46]{Sak09}
in our setting.

\begin{prop}\label{prop:malnormal-maximal}
Let $\Gamma$ be a countable group which is bi-exact relative to an almost malnormal subgroup
$\Gamma_0<\Gamma$. Let $G_1\times H<G$
be a direct product subgroup of a countable group $G$, where both
$G_1$ and $H$ are nonamenable. Suppose that $\Sigma$ is an ergodic measurable embedding of $G$ into
$\Gamma$.

\noindent Then one can find a maximal partial embedding
$\Omega\subset\Sigma$
of $G_1\times H$ into $\Gamma_0$. Moreover,
$E_X^{\Gamma_0}(\Omega)=1_X$.
\end{prop}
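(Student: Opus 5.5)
We follow the strategy of \cite[Proposition~46]{Sak09}, with relative bi-exactness replacing bi-exactness and almost malnormality replacing the wreath-product structure. The argument has three steps: show $G_1\times H\preceq_\Sigma\Gamma_0$, take a maximal partial embedding via Lemma \ref{lem:maximal-malnormal}, and prove its support is full.

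\textbf{Step 1: $G_1\times H\preceq_\Sigma\Gamma_0$.} Since $\Gamma$ is bi-exact relative to $\Gamma_0$, we invoke Sako's measure-equivalence analogue of Ozawa's relative solidity, namely the relative version of \cite[Theorem 41/Proposition 44]{Sak09}. For commuting subgroups $G_1$ and $H$ of $G$ with $H$ nonamenable, it gives that $G_1$ (indeed $G_1\times H$) embeds into $\Gamma_0$ on some non-null piece of $\Sigma$. Concretely:
\begin{enumerate}
\item Pass to the von Neumann algebra $L^\infty(\Sigma)\rtimes\Gamma$, compressed by $X$.
\item Apply the relative solidity statement with respect to $c_0(\Gamma,\{\Gamma_0\})$, using the nonamenability of $H$ for the relative commutant of $G_1$.
\item Translate the resulting intertwining into $L^\infty\rtimes\Gamma_0$ back into a partial embedding, using the dictionary of \cite[Section~3]{Sak09}.
\end{enumerate}
This gives $G_1\preceq_\Sigma\Gamma_0$. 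To upgrade to $G_1\times H$: since $H$ normalizes (commutes with) $G_1$, a maximal partial embedding of $G_1$ into $\Gamma_0$ (which exists by Lemma \ref{lem:maximal-malnormal}, as $G_1$ is nonamenable) is $H$-invariant by uniqueness. Hence it is $G_1\times H$-invariant, and it is therefore a partial embedding of $G_1\times H$.

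\textbf{Step 2: maximal partial embedding.} The group $G_1\times H$ is nonamenable, so Lemma \ref{lem:maximal-malnormal} applied to it produces a maximal partial embedding $\Omega$. The same lemma gives that $E_X^{\Gamma_0}(\Omega)=\operatorname{supp}_X^\Gamma(G_1\times H\preceq_\Sigma\Gamma_0)$ is a nonzero projection $1_{X_0}$ in $L^\infty(X)$.

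\textbf{Step 3: $X_0=X$ (main obstacle).} We use ergodicity of $G\times\Gamma$ on $\Sigma$. The $\Gamma$-saturation $\Gamma\Omega$ corresponds to $X_0$, so it suffices to show that $\Gamma\Omega$ is $G$-invariant. Here maximality is not directly usable: for $g\in G$, the set $g\Omega$ is a partial embedding of $g(G_1\times H)g^{-1}$, not of $G_1\times H$.

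The first approach is to run the Step 1 argument on the complement. Let $\Sigma'=\Sigma\setminus G\Gamma\Omega$, which is $G\times\Gamma$-invariant. If $\Sigma'$ were non-null, it would be a measurable embedding of $G$ in its own right, up to normalization. Applying Step 1 to $\Sigma'$ would then yield a partial embedding of $G_1\times H$ into $\Gamma_0$ inside $\Sigma'$, contradicting maximality of $\Omega$. Therefore $G\Gamma\Omega$ is conull. Ergodicity is not even needed for this part. The remaining difficulty is passing from $G\Gamma\Omega$ to $\Gamma\Omega$.

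For that, the first attempt is to show directly that $\Gamma\Omega$ is conull. Step 1 applies to any $\Gamma$-invariant, $G_1\times H$-invariant non-null piece $\Sigma''$: the relative solidity argument only needs the restriction to $\Sigma''$, viewed as a partial embedding of $G_1\times H$ with finite-measure $\Gamma$-domain. One must check that the relative solidity input localizes to such corners; this corresponds to cutting by a central projection of $L(G_1\times H)'$ intersected with the algebra. Apply this to $\Sigma''=\Sigma\setminus\Gamma\Omega$, which is $\Gamma$-invariant and also $G_1\times H$-invariant because $\Omega$ is. This gives a partial embedding disjoint from $\Gamma\Omega$, which contradicts maximality. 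Hence $\Gamma\Omega$ is conull and $E_X^{\Gamma_0}(\Omega)=1_X$.

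The delicate point I expect is exactly this localized form of Step 1, namely Sako's ME relative solidity restricted to an invariant corner that is not $G$-invariant. I would handle it by working in the algebra $1_{\Sigma''}(L^\infty(\Sigma)\rtimes\Gamma)1_{\Sigma''}$ and invoking Corollary \ref{main: relsol} (or Theorem \ref{commut2}) in its corner form, with $\mathcal N$ abelian and the group $\Gamma$.
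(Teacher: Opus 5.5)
Your argument is correct. Steps 1--2 are the paper's argument with the roles of $G_1$ and $H$ swapped. The paper gets $H\preceq_\Sigma\Gamma_0$ from \cite[Theorem 25]{Sak09}, using nonamenability of $G_1$. It then takes the maximal partial embedding of $H$ from Lemma \ref{lem:maximal-malnormal} and uses maximality to show it is $G_1$-invariant, hence a maximal partial embedding of $G_1\times H$.

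The genuine difference is the support statement. The paper reads off
\[
E_X^{\Gamma_0}(\Omega)=\operatorname{supp}_X^\Gamma(H\preceq_\Sigma\Gamma_0)=1_X
\]
directly, because Sako's theorem, as it is invoked there, already gives full $\Gamma$-support. You use only the existence half of Sako's theorem and recover full support by an exhaustion argument on $\Sigma''=\Sigma\setminus\Gamma\Omega$. That route makes you independent of the support clause and of ergodicity, at the cost of one extra step. The paper's route is shorter because the support conclusion comes packaged with the citation.

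Your ``delicate point'' is not actually delicate. $\Sigma''$ is $\Gamma$-invariant, and $X\cap\Sigma''$ is a finite-measure $\Gamma$-fundamental domain for it. It also has a $(G_1\times H)$-fundamental domain, namely $\Sigma''\cap\bigsqcup_j c_jY$, where $Y$ is a $G$-fundamental domain and $\{c_j\}$ are representatives of $(G_1\times H)\backslash G$. So $\Sigma''$ is literally a measurable embedding of the group $G_1\times H$ into $\Gamma$. Step 1, including the use of Lemma \ref{lem:maximal-malnormal}, applies to it verbatim with $G$ replaced by $G_1\times H$, and no corner version of relative solidity is needed.

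You should drop the proposed fallback through Corollary \ref{main: relsol} / Theorem \ref{commut2}. Those results require $\Gamma$ to be finitely generated and hyperbolic relative to exact subgroups, whereas the proposition only assumes $\Gamma$ is bi-exact relative to an almost malnormal $\Gamma_0$. Your ``concretely'' sketch of Step 1 is also off: compressing $L^\infty(\Sigma)\rtimes\Gamma$ by $1_X$ gives the abelian algebra $L^\infty(X)$, so the relevant algebra must also encode the $G$-action. Since you are citing Sako's theorem rather than reproving it, that sketch is unnecessary anyway.
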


\begin{proof}
Since $\Gamma$ is bi-exact relative to $\{\Gamma_0\}$, \cite[Theorem 25]{Sak09} implies
that $H\preceq_\Sigma\Gamma_0.
$
Furthermore, the $\Gamma$-support of the embedding is $1_X$. Let
$\Omega$ be the largest embedding of $H$ into $\Gamma_0$
(Lemma~\ref{lem:maximal-malnormal}).

Since every element of $G_1$ commutes with every element of $H$, for
every $g\in G_1$ the measurable subsets
$g\Omega,g^{-1}\Omega$
are again partial embeddings of $H$ into $\Gamma_0$.

By maximality of $\Omega$, both
$g\Omega\subset\Omega,
g^{-1}\Omega\subset\Omega$,
modulo null sets. Hence the symmetric difference between $g\Omega$ and
$\Omega$ is null. Replacing $\Omega$ by an equivalent measurable subset,
we may therefore assume that $\Omega$ is $G_1$-invariant. Since $\Omega$
is already $H$-invariant by construction, it follows that $\Omega$ is
$(G_1\times H)\times\Gamma_0$-invariant. Consequently, $\Omega$ is a measurable embedding of
$G_1\times H$ into $\Gamma_0$.

The embedding $\Omega$ is maximal as an embedding of
$G_1\times H$, since every partial embedding of $G_1\times H$ into
$\Gamma_0$ is, a fortiori, a partial embedding of $H$ into
$\Gamma_0$, and $\Omega$ is maximal among such embeddings.

Finally, $E_X^{\Gamma_0}(\Omega)
=
\operatorname{supp}_X^\Gamma(H\preceq_\Sigma\Gamma_0)
=
1_X$,
as claimed.
\end{proof}

We are now ready to prove the ME-rigidity result needed below. Its proof is an adaptation of the factor-matching argument in the proof of \cite[Theorem~50]{Sak09}.

\begin{thm}\label{thm:MEfactorrigid} For each $i=1,2$, let $H_i<G_i$ be groups satisfying the following assumptions:

\begin{enumerate}
\item $H_i=H_i^1\times H_i^2$, where $H_i^1$ and $H_i^2$ are ICC and nonamenable;
\item $G_i$ is bi-exact relative to $\{H_i\}$;
\item $H_i$ is almost malnormal in $G_i$.
\end{enumerate}

If $G_1\simeq_{\rm ME}G_2$,
then $H_1\simeq_{\rm ME}H_2$.
\end{thm}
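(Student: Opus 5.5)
We will follow the factor-matching strategy of \cite[Theorem~50]{Sak09}, using Proposition~\ref{prop:malnormal-maximal} in both directions. Let $\Sigma$ be an ME coupling of $G_1$ and $G_2$. By passing to an ergodic component (as in \cite[Section 3]{Sak09}), we may assume that $\Sigma$ is ergodic, so that it is an ergodic measurable embedding of $G_1$ into $G_2$ and, symmetrically, of $G_2$ into $G_1$. We fix a $G_2$-fundamental domain $X$ and a $G_1$-fundamental domain $Y$, both of finite measure.

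\textbf{Step 1.} We apply Proposition~\ref{prop:malnormal-maximal} with $\Gamma=G_2$, $\Gamma_0=H_2$, $G=G_1$ and the direct product subgroup $H_1^1\times H_1^2=H_1$. This is legitimate because $G_2$ is bi-exact relative to the almost malnormal subgroup $H_2$ and both $H_1^i$ are nonamenable. It yields a maximal partial embedding $\Omega\subset\Sigma$ of $H_1$ into $H_2$, which is $H_1\times H_2$-invariant and satisfies $E^{H_2}_X(\Omega)=1_X$. In particular, an $H_2$-fundamental domain of $\Omega$ has finite measure. Symmetrically, we obtain a maximal $H_1\times H_2$-invariant set $\Omega'$, a partial embedding of $H_2$ into $H_1$, with $E^{H_1}_Y(\Omega')=1_Y$, so that an $H_1$-fundamental domain of $\Omega'$ has finite measure.

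\textbf{Step 2.} We show that $\Omega=\Omega'$ up to null sets. Since $\Omega'$ is $H_1\times H_2$-invariant and non-null, and $\Omega$ is maximal among $H_1\times H_2$-invariant partial embeddings of $H_1$ into $H_2$, it suffices to check that $\Omega'$, or a suitable piece of it, has finite-measure $H_2$-fundamental domain. To do this we consider $\Omega\cap\Omega'$. By ergodicity and the support formulas $E_X^{H_2}(\Omega)=1_X$ and $E_Y^{H_1}(\Omega')=1_Y$, the set $\Omega\cap\Omega'$ is non-null: otherwise $G_1\Omega'$ (which is all of $\Sigma$ by ergodicity) would be covered by translates $g\Omega$ disjoint from $\Omega'$. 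The hard part is getting from non-nullness of the intersection to equality. Here we use almost malnormality, exactly as in Claim~\ref{proj} of Lemma~\ref{lem:maximal-malnormal}. For $g\in G_1\setminus H_1$, the set $g\Omega'\cap\Omega'$ is invariant only under $(gH_1g^{-1}\cap H_1)\times H_2$, where $gH_1g^{-1}\cap H_1$ is finite. Its $E^{H_1}_Y$-value is therefore controlled, and \cite[Lemma 15]{Sak09} together with the nonamenability of $H_2$ forces $g\Omega'\cap\Omega'$ to be null. Hence $\Omega'$ is a ``$G_1/H_1$-disjoint'' set: its $G_1$-translates are indexed by $G_1/H_1$, and likewise the $G_2$-translates of $\Omega$ are indexed by $G_2/H_2$. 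Comparing $\Omega$ and $\Omega'$ inside the intersection then shows that $\Omega\cap\Omega'$ is a partial embedding in both directions. By maximality we conclude $\Omega=\Omega\cap\Omega'=\Omega'$.

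\textbf{Step 3.} Once Step 2 is in place, the conclusion is immediate. The set $\Omega$ is $H_1\times H_2$-invariant and non-null, it has an $H_2$-fundamental domain of finite measure (from $E^{H_2}_X(\Omega)=1_X$), and it has an $H_1$-fundamental domain of finite measure (from $\Omega=\Omega'$). By definition $\Omega$ is therefore an ME coupling of $H_1$ and $H_2$, so $H_1\simeq_{\rm ME}H_2$.

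We expect the main obstacle to be Step 2, namely showing that the two maximal embeddings coincide. This requires carefully adapting Sako's argument from amalgamated free products to almost malnormal subgroups through \cite[Lemma 15]{Sak09}. As a fallback, it would suffice to prove the weaker statement that $\Omega\cap\Omega'$ has finite-measure fundamental domains for both $H_1$ and $H_2$. That follows from $\Omega\cap\Omega'\subset\Omega$ and $\Omega\cap\Omega'\subset\Omega'$, since a subset inherits finite-measure fundamental domains, together with the non-nullness of $\Omega\cap\Omega'$ established via ergodicity. If this shortcut holds up, it would bypass the equality $\Omega=\Omega'$ entirely.
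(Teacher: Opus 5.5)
Your Step 1 matches the paper, and your fallback reduction is sound: $\Omega\cap\Omega'$ is $H_1\times H_2$-invariant, inherits a finite-measure $H_2$-fundamental domain from $\Omega$ and a finite-measure $H_1$-fundamental domain from $\Omega'$, and so is an ME coupling of $H_1$ and $H_2$ as soon as it is non-null. Everything therefore hinges on the non-nullness of $\Omega\cap\Omega'$, and this is where the proposal has a genuine gap.

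From $G_1\Omega'=\Sigma$ you only learn that $\Omega\cap g\Omega'$ is non-null for \emph{some} $g\in G_1$. (That equality comes from $E^{H_1}_Y(\Omega')=1_Y$; ergodicity alone only gives $G_1G_2\Omega'=\Sigma$.) Since $\Omega$ is merely $H_1$-invariant, nothing forces $g\in H_1$, and no ergodicity argument can move $g$ back. The almost malnormality argument you do give concerns the self-intersections $g\Omega'\cap\Omega'$. Its conclusion is correct: it is just the statement that $E^{H_1}_Y(\Omega')$ is a projection, i.e.\ the Claim in Lemma~\ref{lem:maximal-malnormal} with the roles of the groups exchanged. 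But it never involves $\Omega$, so ``comparing $\Omega$ and $\Omega'$ inside the intersection'' has nothing to work with. Maximality also only yields inclusions of partial embeddings into $\Omega$ or $\Omega'$, so it cannot by itself produce $\Omega=\Omega\cap\Omega'=\Omega'$.

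The missing idea is to control the \emph{mixed} intersections $\Omega\cap g\Omega'$ for $g\in G_1\setminus H_1$, by playing the finiteness of the $H_2$-measure of $\Omega$ against its $H_1$-invariance. The translates $g\Omega'$, $gH_1\in G_1/H_1$, are pairwise disjoint and cover $\Sigma$. Hence $\Omega=\bigsqcup_{gH_1\in G_1/H_1}(\Omega\cap g\Omega')$, a decomposition into $H_2$-invariant pieces. For $h\in H_1$ one has $h(\Omega\cap g\Omega')=\Omega\cap hg\Omega'$, and $\mathrm{Tr}_{H_2}$ is unchanged because the $G_1$-action preserves the measure and commutes with $H_2$. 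For $g\notin H_1$, the $H_1$-orbit of $gH_1$ in $G_1/H_1$ is in bijection with $H_1/(H_1\cap gH_1g^{-1})$. This is infinite by almost malnormality, since $H_1$ is infinite. So if such a piece were non-null, $\Omega$ would contain infinitely many pairwise disjoint pieces of the same positive $H_2$-measure, contradicting $\mathrm{Tr}_{H_2}(\Omega)<\infty$. Thus $\Omega\subseteq\Omega'$ up to null sets, so $\Omega\cap\Omega'=\Omega$ is non-null, and the symmetric argument gives $\Omega=\Omega'$. This is exactly the paper's argument, with $\Xi$ in place of your $\Omega'$. Once it is inserted, your Step 3 or your fallback completes the proof.
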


\begin{proof}
Let $\Sigma$ be an ergodic ME coupling of $G_1$ and $G_2$. By Proposition~\ref{prop:malnormal-maximal}, there exists a maximal partial embedding
$\Omega\subset\Sigma$
of $H_1$ into $H_2$. Likewise, exchanging the roles of the two groups, there exists
a maximal partial embedding $\Xi\subset\Sigma$
of $H_2$ into $H_1$.

We first show that
\[
\chi(\Omega)\leq \chi(\Xi).
\]

Suppose otherwise. Then there exists
$g\in G_1\setminus H_1$
such that $\Omega\cap g\Xi$
is non-null.

Since $H_1$ is almost malnormal, $H_1\cap gH_1g^{-1}$ is finite. Let
$\{h_\iota\}_{\iota\in I}$
be representatives for the left cosets
$H_1/(H_1\cap gH_1g^{-1})$.
As $H_1$ is infinite, the index set $I$ is infinite.

Consider $Y_\Xi\subset \Xi$ an $H_1$-fundamental domain, so that $\Xi=H_1 Y_\Xi$. Next we argue that the measurable subsets $\{h_\iota g\Xi\}_{\iota\in I}$
are pairwise disjoint modulo null sets. Towards this, assume by contradiction that $h_\iota g\Xi\cap h_\kappa g\Xi$
is non-null for some $\iota\neq\kappa$. Since $\Xi= H_1 Y_\Xi$, this further implies existence of $a,b\in H_1$ so that $h_\iota ga Y_\Xi\cap h_\kappa g b Y_\Xi$ is non-null. Since $Y_\Xi$ is  $H_1$-fundamental domain then $a^{-1}g^{-1}h^{-1}_\iota h_\kappa gb \in H_1$ and hence $g^{-1}h^{-1}_\iota h_\kappa g \in H_1$. Therefore $h_\iota^{-1}h_\kappa \in gH_1 g^{-1}\cap H_1$, which contradicts the choice of representatives.  Hence $\{h_\iota g\Xi\}_{\iota\in I}$ are
pairwise disjoint modulo null sets.

Since $\Omega$ is $H_1$-invariant, $h_\iota(\Omega\cap g\Xi)
=
\Omega\cap h_\iota g\Xi$. Therefore,
\begin{equation}\label{pos}
\operatorname{Tr}_{H_2}
(\Omega\cap h_\iota g\Xi)
=
\operatorname{Tr}_{H_2}
(\Omega\cap g\Xi)
>0 \text{ for every }\iota\in I.
\end{equation}
Using the pairwise disjointness of the translates,
\[
\begin{aligned}
0
&<
\sum_{\iota\in I}
\operatorname{Tr}_{H_2}
(\Omega\cap h_\iota g\Xi) =
\operatorname{Tr}_{H_2}
\left(
\Omega\cap
\bigcup_{\iota\in I}
h_\iota g\Xi
\right)\le
\operatorname{Tr}_{H_2}(\Omega)
<\infty.
\end{aligned}
\]
By \eqref{pos}, this further implies that $|I| {\rm Tr}_{H_2}(\Omega \cap g \Xi)<\infty$, contradicting that $I$ is infinite.

Hence $\Omega\cap g\Xi$ is null for every $g\in G_1\setminus H_1$ and  consequently, $\chi(\Omega)\leq\chi(\Xi)$.

Interchanging the roles of $H_1 <G_1$ and $H_2<G_2$ we also get 
$\chi(\Xi)\leq\chi(\Omega)$. Therefore, $\chi(\Xi)=\chi(\Omega)$ which implies that $\Xi=\Omega$ modulo null sets. Finally, this yields $H_1\sim_{\rm ME} H_2$ via coupling $\Xi$.\end{proof}

\subsection{A continuum family of property (T) relative hyperbolic groups}

Let $\mathscr I=(1,\frac{9}{8})$. For every $c\in \mathscr I$, Gaboriau constructed in \cite{Ga00} a group $J_c$ as the fundamental
group of an infinite graph of finite groups
\[
G_1
\;\xleftrightarrow{\;K_1\;}
G_2
\;\xleftrightarrow{\;K_2\;}
G_3
\;\xleftrightarrow{\;K_3\;}
\cdots
\]
where

\begin{enumerate}
\item each vertex group $G_n$ is finite;
\item each edge group $K_n$ contains an index-$2$ subgroup $H_n$;
\item for $n\ge2$, the group $G_n$ is generated by $K_{n-1}$ and $H_n$;
\item the inclusions $H_1<H_2<\cdots$ have locally finite (hence amenable) union $H_\omega=\bigcup_{n\ge1}H_n$.
\end{enumerate}

The finite groups are chosen so that repeated applications of Gaboriau's
cost formula for amalgams over amenable subgroups yield $C(J_c)=c$.

Since $H_\omega$ is not finitely generated, $J_c$ need not be finitely generated. Thus Gaboriau embeds $H_\omega$
into a finitely generated amenable group $H$ and defines $\Gamma_c=J_c*_{H_\omega}H$. Using the cost formula for amalgamated free products one obtains $C(\Gamma_c)=C(J_c)=c$.

Hence every $c\ge1$ is realized as the cost of a finitely generated
fixed-price group. \cite[Proposition~VI.16]{Ga00} Since cost is an orbit-equivalence invariant for
fixed-price groups, the groups $\Gamma_c$ corresponding to distinct
values of $c$ are pairwise non-orbit equivalent. \cite[Proposition~VI.16]{Ga00}. Now as in \cite[Proposition 5.1]{ITD} consider the free product $A_c= \Gamma_c\ast (\mathbb F_2\times \mathbb F_2)$. 

\begin{prop}\label{exactgroups}
  For every $c_1,\ldots,c_n\in \sI$, the product group   $A_{c_1}\times \cdots \times A_{c_n}$ is exact. 
\end{prop}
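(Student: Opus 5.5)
The plan is to use the standard permanence properties of exactness: exactness passes to subgroups, to finite direct products, to free products (more generally to amalgamated free products and to fundamental groups of graphs of groups), and to increasing unions. Since a finite direct product of exact groups is exact, it is enough to prove that each single $A_c=\Gamma_c\ast(\mathbb F_2\times\mathbb F_2)$ is exact, $c\in\sI$. The factor $\mathbb F_2\times\mathbb F_2$ is exact, because $\mathbb F_2$ is hyperbolic (hence exact) and products preserve exactness. By the free product permanence, the question then reduces to showing that $\Gamma_c$ is exact.

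Next I would write $\Gamma_c=J_c\ast_{H_\omega}H$. The group $H$ is amenable, hence exact. For an amalgamated free product, exactness follows from exactness of the two factors, for instance via Dykema's result on reduced amalgamated free products of exact C$^*$-algebras, or via amenability of the action on the Bass--Serre boundary. So it remains to show that $J_c$ is exact.

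To handle $J_c$, I would describe it as the increasing union $J_c=\bigcup_n J_c^{(n)}$. Here $J_c^{(n)}$ is the fundamental group of the finite segment $G_1\leftrightarrow\cdots\leftrightarrow G_n$ of the graph of groups. Each $J_c^{(n)}$ is an iterated amalgamated free product of finite groups over finite groups. It is therefore virtually free, hence hyperbolic and exact. Alternatively, its exactness follows inductively from the amalgamated free product permanence.

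The step I expect to need the most care is the passage to the infinite union. Since $J_c$ is not finitely generated, we cannot appeal to hyperbolicity directly. Instead I would use that exactness of a discrete group is equivalent to exactness of $C^*_r$. The algebra $C^*_r(J_c)$ is the inductive limit of the algebras $C^*_r(J_c^{(n)})$, each of which sits inside it with a conditional expectation. An inductive limit of exact C$^*$-algebras is exact, so $J_c$ is exact; equivalently, one can use that exactness is a local property, determined by the finitely generated subgroups. Combining the reductions of the three paragraphs yields the exactness of $A_{c_1}\times\cdots\times A_{c_n}$.
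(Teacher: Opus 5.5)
Your proof is correct. It differs from the paper's only in how you obtain exactness of $J_c$. The remaining reductions are the same permanence properties the paper invokes:
\begin{enumerate}
\item amalgamation over $H_\omega$ with the amenable group $H$;
\item the free product with $\mathbb F_2\times\mathbb F_2$;
\item finite direct products.
\end{enumerate}

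The paper argues measure-theoretically. Since $J_c$ is strongly treeable with fixed price $c>1$, realization results for treeable equivalence relations make every free ergodic action of $J_c$ stably orbit equivalent to a free action of a nonabelian free group. Hence $J_c\simeq_{\rm ME}\mathbb F_n$, and exactness follows from Deprez--Li's theorem that exactness is invariant under measure equivalence.

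You instead use the algebraic structure of Gaboriau's graph of groups. You write $J_c$ as the directed union of the fundamental groups $J_c^{(n)}$ of its finite segments. Each $J_c^{(n)}$ is a finite graph of finite groups, hence virtually free and exact. You then use that exactness is local, via $C^*_r(J_c)=\overline{\bigcup_n C^*_r(J_c^{(n)})}$ and the fact that inductive limits of exact C$^*$-algebras are exact.

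What each route buys: yours is more elementary and self-contained. It avoids cost, treeability and the deep measure-equivalence invariance theorem. It also never uses $c>1$, so it shows $J_c$, and hence $A_c$, is exact for any choice of the finite groups in the construction. The paper's route fits the cost and measure-equivalence framework used elsewhere in that section, but it relies on much heavier machinery for what is ultimately a soft fact.
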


\begin{proof} First fix $c>1$.
    Since $J_c$ is strongly treeable and has fixed price $c>1$, every free ergodic probability measure preserving action of $J_c$ gives rise to an ergodic treeable equivalence relation of finite cost $c$. By Gaboriau's realization theorem for treeable equivalence relations, every ergodic treeable equivalence relation of finite cost is stably orbit equivalent to a free probability measure preserving action of a nonabelian free group. Since stable orbit equivalence of free ergodic actions is equivalent to measure equivalence of the acting groups (see, e.g., \cite{Fu99}), it follows that $J_c$ is measure equivalent to a nonabelian free group. Since exactness is preserved under measure equivalence by \cite[Theorem 0.1(6)]{DL15} it follows that $J_c$ is exact. Finally, since exactness is preserved under amalgamated free product over amenable subgroup we concludde that $\Gamma_c
$ is also exact and hence $A_c
$ is also exact. Since exactness passed to direct products the conclusion follows.
\end{proof}

Next for any unordered pair $\imath=\{c,d\}\subset \sI$ we consider the product group $H_\imath= A_c\times A_d$. 

\begin{thm}\label{nonMEperipheral}
   For every $\imath\neq \jmath$ we have that $H_\imath \not\simeq_{\rm ME} H_\jmath$. 
\end{thm}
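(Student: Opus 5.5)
Suppose, towards a contradiction, that $H_\imath=A_c\times A_d$ is measure equivalent to $H_\jmath=A_{c'}\times A_{d'}$ for unordered pairs $\imath=\{c,d\}\neq\jmath=\{c',d'\}$. The plan is to use a product rigidity theorem for measure equivalence, in the spirit of Monod--Shalom and Sako. This should force the ME coupling to match the factors $A_c,A_d$ with $A_{c'},A_{d'}$ up to a permutation. After that, cost rigidity finishes the argument. This is exactly how \cite[Proposition 5.1]{ITD} is set up, and we follow that route.

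\textbf{Step 1: bi-exactness of the factors.} Each $A_c=\Gamma_c\ast(\mathbb F_2\times\mathbb F_2)$ is a free product of exact groups (Proposition \ref{exactgroups}). By Ozawa's theorem for free products \cite{Oz06}, $A_c$ is bi-exact relative to $\{\Gamma_c,\mathbb F_2\times\mathbb F_2\}$. The factor $\mathbb F_2\times\mathbb F_2$ is itself bi-exact relative to its two direct factors. Hence $A_c$ lies in Sako's class $\mathcal S$-relative framework: it is nonamenable and bi-exact relative to a family of subgroups that are ``small'' in the sense used in \cite[Sections 7--8]{Sak09}.

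\textbf{Step 2: factor matching.} Apply Sako's factorization theorem for products of groups in his class (the analogue of \cite[Theorem 50]{Sak09}, or of the Monod--Shalom theorem adapted to relative bi-exactness) to an ergodic ME coupling $\Sigma$ of $A_c\times A_d$ with $A_{c'}\times A_{d'}$. The mechanism is the one already used in Proposition \ref{prop:malnormal-maximal}. Since $A_d$ is nonamenable and commutes with $A_c$, relative bi-exactness of $A_{c'}$ forces $A_c$ to partially embed into a peripheral subgroup of $A_{c'}$ or into $A_{d'}$. Iterating this over the free-product structure, and using that peripheral free factors are almost malnormal (as in Lemma \ref{lem:maximal-malnormal}), we aim to conclude that, after a permutation, $A_c\simeq_{\rm ME}A_{c'}$ and $A_d\simeq_{\rm ME}A_{d'}$.

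\textbf{Step 3: cost distinguishes the factors.} Gaboriau's cost formula for free products gives $\mathcal C(A_c)=\mathcal C(\Gamma_c)+\mathcal C(\mathbb F_2\times\mathbb F_2)=c+1$. The group $A_c$ has fixed price, being a free product of fixed-price groups. Cost is not an ME invariant in general, so compare $\ell^2$-Betti-type invariants via the ME coupling index instead. If $A_c\simeq_{\rm ME}A_{c'}$ with compression constant $t$, then $\mathcal C(A_{c'})-1=t(\mathcal C(A_c)-1)$, since cost minus one scales under stable orbit equivalence for fixed-price groups. The same holds for the first $\ell^2$-Betti numbers: $\beta_1^{(2)}(A_c)=\beta_1^{(2)}(\Gamma_c)+1$, and $\beta_1^{(2)}$ scales by the same $t$. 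Comparing the two ratios pins down $t$ and then $c=c'$, provided we know $\beta_1^{(2)}(\Gamma_c)$. This is where the restriction $c\in(1,9/8)$ should be used: it keeps the invariants in a range where the matching is injective.

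\textbf{Main obstacle.} Step 2 is the hard part. It requires care in handling the non-unimodular compression constant in the product coupling, and in excluding the ``diagonal'' matching where $A_c$ spreads over both $A_{c'}$ and $A_{d'}$. I would first try to import \cite[Proposition 5.1]{ITD} verbatim, since it already treats exactly these groups. Otherwise, I would prove Step 2 by running the maximal partial embedding argument of Lemma \ref{lem:maximal-malnormal} twice, once for each factor.
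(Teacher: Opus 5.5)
Your two-step outline (match the factors, then tell the $A_c$'s apart) has the same shape as the paper's proof, but each step has a genuine gap. The paper gets the factor matching ($A_c\simeq_{\rm ME}A_{c'}$ and $A_d\simeq_{\rm ME}A_{d'}$ after a permutation) from \cite[Theorem A]{Dr23}. It uses \cite[Proposition 5.1]{ITD} only for the single groups $A_c$, which are pairwise non-ME; that proposition does not handle the products, so citing it cannot replace your Step 2.

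\textbf{Step 2 is not a proof.} Sako's product rigidity needs factors in class $\mathcal S$, i.e.\ bi-exact groups, and $A_c$ is not bi-exact. It contains $\mathbb F_2\times\mathbb F_2$, and solidity of $L(A_c)$ would forbid an infinite subgroup commuting with a nonamenable one. Relative bi-exactness of $A_{c'}\times A_{d'}$ plus \cite[Theorem 25]{Sak09} at best gives partial embeddings of $A_d$ into subgroups such as $(\mathbb F_2\times\mathbb F_2)\times A_{d'}$ or $A_{c'}\times\Gamma_{d'}$. These subgroups again contain commuting nonamenable pairs, and they are not almost malnormal, so Lemma~\ref{lem:maximal-malnormal} does not apply to them. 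Your outline never shows how to get from them down to $A_{c'}$ or $A_{d'}$. What you need is a product rigidity theorem for free products that does not use bi-exactness of the factors; that is exactly \cite[Theorem A]{Dr23}.

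\textbf{Step 3 fails as written.} Since $\Gamma_c$ is treeable, $\beta^{(2)}_1(\Gamma_c)=\mathcal C(\Gamma_c)-1=c-1$, so $\beta^{(2)}_1(A_c)=c=\mathcal C(A_c)-1$. Your two invariants coincide on every $A_c$ and both give the single relation $c'=tc$. The coupling constant $t$ is never determined, and restricting $c$ to $(1,9/8)$ does not change this.

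The missing idea is the purpose of the free factor $\mathbb F_2\times\mathbb F_2$, which is to fix $t$ via $\beta^{(2)}_2$:
\begin{itemize}
\item In degrees $\geq 2$, $\ell^2$-Betti numbers are additive under free products.
\item $\beta^{(2)}_2(\Gamma_c)=0$ by treeability.
\item $\beta^{(2)}_2(\mathbb F_2\times\mathbb F_2)=\beta^{(2)}_1(\mathbb F_2)^2=1$.
\end{itemize}
Hence $\beta^{(2)}_2(A_c)=1$ for every $c$. All $\ell^2$-Betti numbers scale by the coupling constant, so any ME coupling of $A_c$ and $A_{c'}$ has $t=1$, and then $\beta^{(2)}_1$ gives $c=c'$. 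This implication, $A_c\simeq_{\rm ME}A_{c'}\Rightarrow c=c'$, is what the paper imports from \cite[Proposition 5.1]{ITD}.
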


\begin{proof}
Fix $\imath=\{c,d\}$ and $\jmath =\{e,f\}$ and let $H_\imath = A_c\times A_d$ and $H_\jmath= A_e\times A_f$. Assume that $H_\imath\simeq_{\rm ME}H_\jmath$. Since the factors of these groups are free products then using \cite[Theorem A]{Dr23} and after permuting the indices if necessarily, we get that $A_c\simeq_{\rm ME} A_e$ and $A_d\simeq_{\rm ME} A_f$. From \cite[Proposition 5.1]{ITD} this further implies that $c=e$ and $d=f$ and hence $\imath=\jmath$.  
\end{proof}

\vskip 0.05in

\noindent\emph{Proof of Theorem \ref{mainresult:nonW^*e}}. For each $\imath=\{c,d\}$ with $c,d\in \sI$ consider the group $H_\imath= A_c \times A_d$ as above. Since $A_c$ and $A_d$ are finitely generated so is $H_\imath$.  Using \cite{AMO06}, one can find a supragroup $H_\imath<G_\imath$ that has property (T) and it is hyperbolic relative to $H_\imath$.  Since $H_\imath$ is exact by Proposition \ref{exactgroups} it follows from Theorem \ref{thm: main} that $G_\imath$ is biexact relative to $\{H_\imath\}$. Moreover, relative hyperboliciy implies that $H_\imath$ is an almost malnormal in $G_\imath$. For $\imath,\jmath$ let $G_\imath \curvearrowright Z_\imath$ $G_\jmath\curvearrowright Z_\jmath$ be  a pair of arbitrary free, ergodic pmp actions on standard probability spaces. Denote by $\sM_\imath= L^\infty(Z_\imath)\rtimes G_\imath$ and $\sM_\jmath=L^\infty(Z_\jmath)\rtimes G_\jmath$ the corresponding II$_1$ factors and assume that there is a projection $p\in M_\imath$ such that $p\sM_\imath p =M_\jmath$. By factoriality we can assume that $p\in L^\infty(Z_\imath)$. Since $H_\imath$ are products of nonamenable groups Theorem \ref{uniquecartan} implies that there is a unitary $u\in \sM_\jmath$ such that $uL^\infty(Z_\imath)pu^*= L^\infty(Z_\jmath)$. Then   \cite{Si55} further implies the actions $G_\imath\curvearrowright Z_\imath$ and $G_\jmath\curvearrowright Z_\jmath$ are stably orbit equivalent. Thus $G_\imath \simeq_{\rm ME} G_\jmath$ and by Theorem \ref{thm:MEfactorrigid} it follows that $H_\imath\simeq_{\rm ME } H_\jmath$ and hence by Theorem \ref{nonMEperipheral} we get $\imath=\jmath$.

\end{document}